\documentclass[10pt]{amsart}

\usepackage{a4wide}
\usepackage{graphicx} 
\usepackage{geometry}
\usepackage[utf8]{inputenc}
\usepackage{amssymb,amsmath,amscd,amsfonts,amsthm,bbm,mathrsfs,enumerate}
\usepackage{booktabs}
\usepackage[colorlinks=true, linkcolor=blue, citecolor=blue]{hyperref}
\usepackage{mathabx}
\usepackage{graphicx}
\usepackage{color}
\usepackage{accents}
\usepackage{subcaption}
\usepackage{enumitem}
\usepackage{amsaddr}
\usepackage{comment}

\DeclareUnicodeCharacter{00A0}{~}

\DeclareMathOperator{\Var}{Var}

\newtheorem{problem}{Open Problem}

\newcommand{\cY}{{\check{Y}}}

\newcommand{\bs}{\boldsymbol}

\DeclareMathOperator{\bd}{bd}

\numberwithin{equation}{section}
\newcommand{\1}{\mathbbm 1}

\newcommand{\R}{\mathbb{R}}
\newcommand{\C}{\mathbb{C}}

\newcommand{\rank}{\mathop{\operator@font rank}}

\newcommand{\cI}{{\mathcal I}}
\newcommand{\cJ}{{\mathcal J}}

\newcommand{\vertiii}[1]{{\left\vert\kern-0.25ex\left\vert\kern-0.25ex\left\vert #1
    \right\vert\kern-0.25ex\right\vert\kern-0.25ex\right\vert}}
\makeatother

\DeclareMathOperator{\tr}{tr}
\DeclareMathOperator{\adj}{adj}

\DeclareMathOperator{\vect}{vec}

\newcommand{\HH}{\mathbb H}
\newcommand{\N}{\mathbb{N}} 
\newcommand{\E}{\mathbb{E}}

\newtheorem{notation}{Notation}

\newtheorem{thm}{Theorem}[section]
\newtheorem{lem}[thm]{Lemma}
\newtheorem{prop}[thm]{Proposition}
\newtheorem{cor}[thm]{Corollary}
\newtheorem{ass}{Assumption}
\theoremstyle{definition}

\theoremstyle{remark}
\newtheorem{rem}[thm]{Remark}

\title[Extreme eigenvalues and eigenvectors]{Extreme eigenvalues and 
eigenvectors for finite rank additive deformations of 
non-Hermitian sparse random matrices}
 
\author{Walid Hachem$^{a}$, Michail Louvaris$^{b}$, Jamal Najim$^{a}$}

\address{$^{a}$ CNRS, LIGM (UMR 8049), Université Gustave Eiffel, ESIEE Paris, France.
\quad \\ Emails: walid.hachem@univ-eiffel.fr, jamal.najim@univ-eiffel.fr}
\address{$^{b}$ Department of Mathematics, Yale University, New Haven, USA. \\
\quad Email: michail.louvaris@yale.edu}

\begin{document}
\maketitle
\begin{abstract}

Consider a $n\times n$ sparse non-Hermitian random matrix $X^n$ defined as the
Hadamard product between a random matrix with centered independent and
identically distributed entries and a sparse Bernoulli matrix with success
probability $K_n/n$ where $K_n \le n$ (and possibly $K_n\ll n$) and $K_n\to
\infty$ as $n\to \infty$. Let $E^n$ be a deterministic $n\times n$ finite-rank
matrix. We prove that the outlier eigenvalues of $Y^n= X^n +E^n$ asymptotically
match those of $E^n$. 

In the special case of a rank-one deformation, assuming further that the
sparsity parameter satisfies $K_n \gg \log^9 n$ and that the entries of the
random matrix are sub-Gaussian, we describe the limiting behavior of the
projection of the right eigenvector associated with the leading eigenvalue onto
the right eigenvector of the rank-one deformation. In particular, we prove
that the projection behaves as in the Hermitian case. To that end, we rely 
on the recent universality results of Brailovskaya and van Handel \cite{brailovskaya2024universality} relating
the singular value spectra of deformations of $X^n$ to Gaussian analogues of 
these matrices. 

Our analysis builds upon a recent framework introduced by Bordenave
\emph{et.al.} 2022 \cite{bordenave2022convergence}, and amounts to showing the asymptotic equivalence between
the reverse characteristic polynomial of the random matrix and a random
analytic function on the unit disc with explicit dependence on the finite-rank
deformation.

%
%

\end{abstract}

\section{Introduction and main results} 
The study of eigenvalue outliers in random matrix theory has a rich and
well-established history, particularly in the symmetric and Hermitian settings,
where additive finite-rank deformation often lead to predictable and
well-understood spectral deviations. A landmark result by Baik, Ben Arous, and
Péché (BBP) demonstrated that for sample covariance matrices with Gaussian entries, finite-rank
deformations can induce outlier eigenvalues that separate from the bulk
spectrum once a critical threshold is exceeded; see \cite{baik2005phase}. 
This so-called BBP transition was soon extended to general entries by Baik and Silverstein \cite{baik2006eigenvalues} and
has since become a foundational concept in the field,
with extensions to more general settings such as covariance-type matrices
\cite{paul2007asymptotics}, Wigner-type matrices \cite{capitaine2009largest},
and other deformed matrices \cite{benaych2011eigenvalues}. Key tools in these
developments include the resolvent method, master equations, and moments of
large power.

The non-symmetric / non-Hermitian setting introduces additional challenges;
nevertheless, significant progress has been achieved. In particular,
\cite{tao2013outliers} and \cite{bordenave2016outlier} provide a complete
characterization of the outlier distribution in the i.i.d.~case, assuming
finite fourth moments for the entries.

More recently, the sparse circular law has been established under minimal
moment assumptions in \cite{rudelson2019sparse} and \cite{sah2025sparse}.
Building upon these advances, we prove that outlier results continue to hold
across all sparsity regimes. Our main technical tool is the analysis of the
reverse characteristic polynomial, as developed in
\cite{bordenave2022convergence}. Furthermore, under additional assumptions on
the matrix and its sparsity parameter, we establish a result concerning the
right-eigenvector associated with the largest eigenvalue in the case where the
additive deformation has rank one. To this end, we compare spectral quantities
of the matrix with those of an analogous Gaussian ensemble, leveraging
universality results from \cite{brailovskaya2024universality}.

By also relying on the technique of~\cite{bordenave2022convergence}, the author
of the recent paper \cite{han-25} also deals with the outliers induced by
finite rank deformations of square matrices with independent and identically
distributed entries. This paper deals among others with the sparse Bernoulli
case with a finite rank additive deformation, a model close to ours. The
sparsity parameter of the Bernoulli elements is assumed to converge to infinity
at the rate $n^{o(1)}$.  In this situation, it is moreover assumed in
\cite{han-25} that the finite rank deformation has a finite number of non-zero
elements. These assumptions are not required in our paper, where we only need
the deformation to have a bounded operator norm. Moreover, we do not put any
assumption on the rate of increase of the sparsity parameter. In addition, when
our deformation is of rank one, we also study the angle between the eigenvector
associated to the outlier and the ``true'' vector, a problem not considered
in~\cite{han-25}.  On the other hand, \cite{han-25} tackles the problem of the
extreme eigenvalues of finite product of matrices.

Random additively deformed non-Hermitian matrices
appear in many applied fields, such as natural and artificial neural networks where the
random matrix $Y^n$ at hand represents the random interactions between the
neurons~\cite{som-cri-som-88,wai-tou-13}. We may also cite theoretical ecology where $Y^n$, which
is often sparse, models the interactions among living species within an 
ecosystem~\cite{bunin2017ecological,akj-etal-24}, see also the references therein. In these fields, the eigenvalue of $Y^n$ 
with the largest modulus plays a central role in describing the time evolution 
of the activity of $n$ interacting neurons or of the abundances of the $n$ species 
that constitute the ecosystem.

We introduce some notation before stating our results. 

\subsection{Notations} Let $\C^+ = \{ z \in \C \, : \, \Im z > 0 \}$. The
cardinality of a set $S$, counting multiplicities, is denoted by $| S |$.  For
$m\in \mathbb{N}$, set $[m] = \emptyset$ if $m = 0$ and $[m] = \{1,\ldots, m
\}$ otherwise. Let $z\in \mathbb{C}$ and $A,B\subset \mathbb{C}$, then $d(z,A)=\inf_{\xi\in A}|z-\xi|$ and the Hausdorff distance between $A$ abd $B$, denoted by $d_{\bs{H}}(A,B)$ is defined by
$$
d_{\bs{H}}(A,B) =\max\left\{ \sup_{z\in A}d(z,B)\,;\ \sup_{z\in B}d(z,A) \right\}\, .
$$
When $m >0$, we denote as $\mathfrak S_m$ the symmetric group
over the set $[m]$.  Let $\| \cdot\|$ be the matrix operator norm or the vector
Euclidean norm.  For a matrix $M$, denote by $M^\star$ its conjugate transpose; if
$u,v$ are column vectors with equal dimension, then $\langle u, v\rangle =u^\star
v$. Denote by $I_m$ the $m\times m$ identity matrix, or simply $I$ if the
dimension can be inferred from the context. Denote by $\sigma(M) = \{
\lambda_1(M), \ldots, \lambda_m(M) \}$ the spectrum of a $m\times m$ matrix
$M$, by $\rho(M)$ its spectral radius, and by $s_m(M)$ its least singular
value. For a $m \times m$ matrix $M = (M_{ij})_{i,j=1}^m$ and $\cI,\cJ\subset
[m]$, let $M_{\cI,\cJ}=(M_{ij})_{i\in \cI, j\in \cJ}$ and
$M_\cI=(M_{ij})_{i,j\in \cI}$.  Denote by $\adj(M)$ the adjugate of $M$,
\emph{i.e.}, the transpose of $M$'s cofactors matrix.  For a vector $x\in
\mathbb{C}^m$ and ${\mathcal I}\subset [m]$ let $x_{\mathcal I}=(x_i)_{i\in
{\mathcal I}}$.

For a sequence of random variables $(U_n)$ and a random variable $U$ with values in a common metric space, denote by  $U_n\underset{n \to \infty}{\overset{\mathbb{P}}{\longrightarrow}}U$ and $
U_n\underset{n \to \infty}{\overset{\text{law}}{\longrightarrow}}U$ the convergence in probability and in law, respectively. 
Let $U_n$ and
$V_n$ be random variables in some metric space with probability distribution $\mu_n$ and $\nu_n$. The notation 
\[
  U_n \sim V_n\quad (n\to\infty)
\]
refers to the fact that the sequences $(\mu_n)$ and $(\nu_n)$ are relatively
compact, and that 
\[
   \int f d\mu_n - \int f d \nu_n \xrightarrow[n\to\infty]{}  0 \qquad \left( \Leftrightarrow \quad \mathbb{E} f(U_n) - \mathbb{E}f(V_n) \xrightarrow[n\to\infty]{}  0 \right)
\]
for each bounded continuous real function $f$ on the metric space. We shall say
then that $(U_n)$ and $(V_n)$ are ``asymptotically equivalent''. Note that
$(\mu_n)$ and $(\nu_n)$ do not necessarily converge narrowly to some
probability distribution. We denote by $\nu_n\Rightarrow_n \nu$ the weak convergence of probability measures.

Let $f:A\subset {\mathcal X} \to \R$. We define the function $1_A f$ by
$$
1_A(x) f(x) = \begin{cases}
    f(x)&\textrm{if}\ x\in A\,,\\
    0&\textrm{else.}
\end{cases}
$$

Denote by $\mathcal D(a,\rho)$ the open disk of $\C$ with center $a \in
\C$ and radius $\rho > 0$, by $\mathbb{H}$ the space of
holomorphic functions on $\mathcal{D}(0,1)$, equipped with the topology of
uniform convergence on compact subsets of $\mathcal{D}(0,1)$. It is well-known that $\mathbb{H}$ is a polish space. 

The following conventions will be used throughout the article: $\sum_{\emptyset}=0$, $\prod_{\emptyset}=1$, $\mathrm{det}(A)=1$ if $A$ is a matrix of null dimension. For complex sequences $(w_n),(\tilde w_n)$, the notation $u_n={\mathcal O}(v_n)$ implies the existence of a positive constant $\kappa$  such that $|u_n|\le \kappa |v_n|$ for all $n\ge 1$ sufficiently large. If we want to emphasize the fact that the constant $\kappa$ depends on some extra parameters $z,\eta$, we may write $u_n={\mathcal O}_{z,\eta}(v_n)$.

\subsection{Main results}

\subsubsection{The model}

We begin by introducing our random matrix model.  Let $\chi$ be a complex-valued
random variable such that $\mathbb{E}(\chi) = 0$ and $\mathbb{E}(|\chi|^2) = 1$.
For each integer $n \ge 1$, let 
$A^n  = ( A_{ij}^n )_{i,j=1}^n \in \C^{n\times n}$ be a random matrix with
independent and identically distributed (i.i.d.) elements equal in distribution
to $\chi$. 

Let $(K_n)$ be a sequence of positive integers such that $K_n \leq n$. Let  
$(B^n)$ be a sequence of $n \times n$ matrices with i.i.d. Bernoulli entries
such that, writing $B^n = ( B^n_{ij} )_{i,j=1}^n$, we have $\mathbb P\{ B^n_{11}
= 1\,\} = K_n / n$.  We also assume that $B^n$ and $A^n$ are independent. We
consider the sequence of $n\times n$ random matrices $(X^n)_{n\ge 1}$ given 
as follows.  Writing $X^n = ( X^n_{ij} )_{i,j=1}^n$, we set 
\begin{align}
\label{sparser_matrices} 
X^n_{ij} = \frac{1}{\sqrt{K_n}} B^n_{ij}\, A^n_{ij}\ .  
\end{align} 
Notice that $\mathbb{E} X_{11}^n=0$ and $\mathbb{E} |X_{11}^n|^2= 1/n$. 

Let $r > 0$ be a fixed integer, and consider $2r$ sequences of deterministic
vectors $(u^{1,n})$, $(u^{2,n})$, ..., $(u^{r,n})$, $(v^{1,n})$, $(v^{2,n})$, 
 ..., $(v^{r,n})$ such that $u^{t,n}, v^{t,n} \in \C^n$ for each $t\in[r]$ and each 
$n > 0$.  Consider the sequence $(E^n)$ of $n\times n$ deterministic matrices
defined by 
\[
 E^n = \sum_{t=1}^r u^{t,n} (v^{t,n})^\star\,.
\] 
We make the following assumptions: 
\begin{ass}
\label{ass:K} The integer sequence $(K_n)$ satisfies
    $$K_n \xrightarrow[n\to\infty]{}\infty\, .
    $$
\end{ass}
\begin{ass}
\label{ass:E} 
    There exists an absolute constant $C > 0$ such that
\[
   \sum_{t=1}^r \|u^{t,n}\| + \|v^{t,n} \|  \leq C. 
\]
\end{ass}
In many applicative contexts, $(K_n)$ converges to infinity at a much slower
pace than $n$. For this reason, the parameter $K_n$ is referred to as the
sparsity parameter of the model of $X^n$.

Define the sequence of random matrices $(Y^n)$ as 
\[
    Y^n = X^n + E^n.
\] 
It is well-known, see \cite[Theorem 1.4]{sah2025sparse} which generalizes
\cite[Theorem 1.2]{rudelson2019sparse}, that the empirical spectral
distribution of $X^n$ converges to the so-called circular law. We shall
furthermore show in Theorem \ref{rho-sparse} below that the
spectral radius of $X^n$ converges to $1$.  In this article, we study the
asymptotic behavior of the eigenvalues of $Y^n$ which Euclidean norm is greater
than $1$.  We refer to these eigenvalues as \textit{outliers}, which presence
is due to $E^n$. Their behavior will be described in
Theorem~\ref{thm:largest_eigenvalue} below.  In the case of a single outlier,
we describe the behavior of the associated eigenvector. This will be the 
content of Theorem~\ref{thm_eigenvector}.

\subsubsection{Eigenvalues and characteristic polynomial of $Y^n$}
Our approach is inspired by the technique developed in
\cite{bordenave2022convergence} to capture the asymptotic behavior of the
spectral radius of random matrices with i.i.d.~elements, and later extended in
\cite{coste2023sparse}, \cite{cos-lam-yiz-24}, \cite{fra-gar-23}, and
\cite{hachem2025spectral} to various other models. One key feature of this
approach is that it requires minimal assumptions on the moments of the random
matrices' entries, and it is based on analyzing the asymptotic behavior of the
reverse characteristic polynomial via convergence to a random analytic function
in the unit disk. This latter idea can be found in~\cite{shirai2012limit}.

Consider the reverse characteristic polynomial of matrix $Y^{n}$, defined by
\begin{equation}\label{def:q}
q_n(z) = \det\left(I_n - z Y^{n}\right)\,.
\end{equation}
Clearly, $q_n$ is a $\mathbb{H}$-valued random variable. In this paper, our
first goal is to study the asymptotic distribution of $q_n$ on $\mathbb{H}$.
More precisely, we seek an appropriate sequence of random analytic functions
$\varphi_n\in \mathbb{H}$ such that
\[
q_n \sim \varphi_n\,,\quad (n\to\infty)\, ,
\]
where $\varphi_n$ is simpler to analyze that $q_n$. Studying the large-$n$ behavior 
of $q_n$ in the light of the notion of asymptotic equivalence is well-suited 
to our purpose, since without additional assumptions on the matrices $E^n$, there is
no reason for $(q_n)$ to converge in law in $\mathbb{H}$.

In what follows, we define the sequence of polynomials $(b_n)$ as 
$$
b_n(z) =\det(I-zE^n)\,.
$$
This sequence is pre-compact in $\HH$ as a sequence
of polynomials with degrees bounded by $r$ and with bounded coefficients by
Assumption~\ref{ass:E}. 

\begin{thm}\label{characteristic_poly_thm}
Let Assumptions \ref{ass:K} and \ref{ass:E} hold true. Consider a sequence 
$(Z_k)_{k \geq 1}$ of independent Gaussian random variables with
$$
 \mathbb{E}(Z_k) = 0, \quad \mathbb{E}(|Z_k|^2) = 1, 
  \qquad \text{and} \qquad \mathbb{E}(Z_k^2) = (\mathbb{E}A_{11}^2)^k\,.
$$
Define
\[
\kappa(z) = \sqrt{1 - z^2 \mathbb{E}A_{11}^2} 
 \quad \text{with} \quad \sqrt{1} = 1, \quad \text{and} \quad 
F(z) = \sum_{k=1}^\infty z^k \frac{Z_k}{\sqrt{k}} 
 \quad \text{for} \ z \in \mathcal D(0,1). 
\] 
Also let $G_n(z)= b_n(z) \det(I - z X^n)$. Then
\begin{align}\label{thm_statement_char_pol}
    q_n \sim G_n\,,\quad (n\to\infty)
\end{align}
as $\mathbb H$--valued random variables. Also, 
\begin{align}\label{q_nsimthm}
    q_n \sim b_n \, \kappa \, \exp(-F)\,,\quad (n\to\infty)\, ,
\end{align}
as $\mathbb H$--valued random variables.
\end{thm}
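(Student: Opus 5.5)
The plan is the Bordenave--Chafaï--García-Zelada scheme: tightness in $\HH$ plus convergence of finite-dimensional objects. The first step is to reduce the problem to a low-dimensional determinant. For $|z|<1$, whenever $I-zX^n$ is invertible, the matrix determinant lemma gives
\[
q_n(z)=\det(I-zX^n)\,\det\!\big(I_r - z\,V^\star (I-zX^n)^{-1}U\big),
\]
where $U=[u^{1,n},\dots,u^{r,n}]$ and $V=[v^{1,n},\dots,v^{r,n}]$. Multiplying by $\det(I-zX^n)^{r-1}$ clears the inverse. We therefore expand
\[
\det\!\big(I_r - z V^\star \adj(I-zX^n)U\,\det(I-zX^n)^{-1}\big)
\]
in power series. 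Equivalently, we work with $(I-zX^n)^{-1}=\sum_{k\ge0}z^k(X^n)^k$, which converges in $\HH$ with high probability because $\rho(X^n)\to1$ (Theorem~\ref{rho-sparse}). The key claim is
\[
\langle v^{s,n},(X^n)^k u^{t,n}\rangle \xrightarrow{\mathbb P} 0 \qquad \text{for each fixed } k\ge1.
\]
This follows from a second-moment computation. We have
\[
\mathbb E\big|\langle v,(X^n)^k u\rangle\big|^2=\sum \bar v_{i_0}v_{j_0}u_{i_k}\bar u_{j_k}\,\mathbb E\big[\cdots\big].
\]
Centering forces each edge of the path to be matched, and the paths contributing in the sparse setting are those in which every edge is repeated. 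A path-counting argument using $\mathbb E|X_{ij}|^2=1/n$ and $\mathbb E|X_{ij}|^{2p}\le C K_n^{1-p}n^{-1}$ bounds this by $\|u\|^2\|v\|^2\,O(1/n + 1/K_n)$, which tends to $0$. Heavier moments are not available, so we first truncate $A_{ij}$ at level $\sqrt{K_n}\,\varepsilon_n$. This changes $X^n$ on a negligible set in the $\HH$ sense, and is handled exactly as in \cite{bordenave2022convergence,coste2023sparse}. Hence, coefficient by coefficient, $V^\star(I-zX^n)^{-1}U - V^\star U\to0$ in probability.

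Next we obtain tightness and the identification of $G_n$. We write
\[
q_n(z)=\det(I-zX^n)\,\det\!\big(I_r-zV^\star U - zV^\star R_n(z)U\big),\qquad R_n(z)=\sum_{k\ge1}z^k(X^n)^k.
\]
We also have $\det(I_r - zV^\star U)=\det(I_n-zE^n)=b_n(z)$. It then suffices to show that $\Delta_n(z):=q_n(z)-G_n(z)\to0$ in probability in $\HH$. Tightness of $q_n$ and $G_n$ in $\HH$ comes from bounding $\mathbb E|q_n(z)|^2$ uniformly on compacts of $\mathcal D(0,1)$. Expanding $q_n(z)=\sum_k(-z)^k\sum_{|\mathcal I|=k}\det(Y^n_{\mathcal I})$, the cross terms with $E^n$ are controlled by Assumption~\ref{ass:E}, and one obtains a bound of the form $\sum_k |z|^{2k}C^{\,r}\,\mathrm{poly}(k)$. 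This is Cauchy--Binet on the rank-$r$ part and is exactly the computation in \cite{bordenave2022convergence} with $r$ extra terms. The difference $\Delta_n$ has coefficients that are polynomial combinations of $\det(I-zX^n)$-coefficients and of $\langle v^s,(X^n)^k u^t\rangle$. The second-moment bound above, together with the uniform moment bounds, gives convergence of each coefficient of $\Delta_n$ to $0$ in probability. Tightness then upgrades this to convergence in $\HH$, so $q_n\sim G_n$.

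Finally, $\det(I-zX^n)\to\kappa\exp(-F)$ in law in $\HH$. This is the sparse version of \cite{bordenave2022convergence}, established in \cite{coste2023sparse} via traces: $\tr (X^n)^k\to Z_k$ jointly, with the $k=2$ correction $\mathbb EA_{11}^2$ producing $\kappa$. Since $(b_n)$ is deterministic and precompact, the map $(b,h)\mapsto bh$ is continuous on $\HH^2$. Therefore $\mathbb Ef(b_n\det(I-zX^n))-\mathbb Ef(b_n\kappa e^{-F})\to0$ along every subsequence on which $b_n$ converges, which gives \eqref{q_nsimthm}.

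The main obstacle is the uniform second-moment and tightness control when $K_n$ grows arbitrarily slowly. Higher moments of the entries blow up like $K_n^{p-1}$, so the path counting for $\langle v,(X^n)^ku\rangle$ with delocalization-free deterministic $u,v$ requires care: paths that revisit edges gain factors $1/K_n$ but lose nothing from $\|u\|_\infty$. I would handle this by the truncation step combined with the bound above, which is only needed at fixed $k$.
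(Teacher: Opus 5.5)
Your central reduction is correct and differs from the paper's. The weak points are the $L^2$ steps where the deformation actually matters, plus two smaller issues.

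\textbf{Comparison.} The paper stays at the level of traces. It shows $\tr((Y^n)^k)-\tr((X^n)^k)-\tr((E^n)^k)\to0$ in probability by a combinatorial study of closed walks mixing $X^n$- and $E^n$-edges (Lemma~\ref{|B|-tree}, Proposition~\ref{tr(Y)-tr(X)-det}, Lemma~\ref{trace_bounded_Y^n_lem}). It then uses that the coefficients of $q_n^D$ are fixed polynomials in these traces. You instead factor $q_n(z)=\det(I-zX^n)\det(I_r-zV^\star(I-zX^n)^{-1}U)$, so the only input on the deformation is that $\langle v^{s,n},(X^n)^ku^{t,n}\rangle\to0$ in probability for $k\ge1$. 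For entries bounded by a fixed $D$, a walk count gives $\E|\langle v,(X^n)^ku\rangle|^2=O(1/n)$. Since the coefficients $c_j$ of $\det(I-zX^n)$ satisfy $\E|c_j|^2\le1$, every coefficient of $q_n^D-G_n^D$ then tends to $0$ in probability. This is more transparent: the paper's trace differences are themselves polynomials in the entries of the matrices $V^\star(X^n)^aU$, with each monomial containing a factor with $a\ge1$. It also proves \eqref{thm_statement_char_pol} with no information on the law of $\det(I-zX^n)$; the trace limits of $X^n$ are needed only for \eqref{q_nsimthm}.

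\textbf{Points to repair.}

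First, do not invoke Theorem~\ref{rho-sparse}, since it is deduced from the very statement you are proving. You do not need it anyway: the factorization holds as an identity of formal power series (or for $|z|<1/\|X^n\|$), and you only use it coefficientwise.

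Second, the two $L^2$ estimates where the deformation matters are asserted, not proved: $\sup_n\E|q_n(z)|^2<\infty$, and $\sup_n\E|q_n(z)-q_n^D(z)|^2\to0$ as $D\to\infty$.
\begin{itemize}
\item Under a second-moment hypothesis, truncation changes entries of $X^n$ on an event that is not negligible. So it must be justified in $L^2$ at the level of the determinant, not by saying $X^n$ is changed on a negligible set.
\item Truncate at a fixed $D$; your bound $\E|X_{ij}|^{2p}\le CK_n^{1-p}/n$ presupposes this.
\item The computations of \cite{bordenave2022convergence} rely on orthogonality of the principal minors of $X^n$, and this fails for $Y^n$. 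For $r=1$ and $\mathcal J=\mathcal I\setminus\{c\}$ one finds $\E\det Y^n_{\mathcal I}\overline{\det Y^n_{\mathcal J}}=\bar v^n_c u^n_c\,\E|\det X^n_{\mathcal J}|^2$.
\end{itemize}
The paper settles both estimates with one device. The determinant is multi-affine in the entries, so these second moments depend only on second-order statistics and can be computed with circularly symmetric Gaussian entries. Bi-unitary invariance then replaces $I-zE^n$ by its singular-value matrix, which differs from $I_n$ in at most $2r$ bounded diagonal entries. After that, the expansion over column subsets is orthogonal. A Cauchy--Binet expansion of the rank-$r$ part can also be made to work, but only if orthogonality is organized by the row and column index sets of the $X^n$-minors. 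Sparsity plays no role in these two bounds, contrary to your closing paragraph; it enters only through the trace limits.

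Third, those trace limits are $\tr((X^n)^k)\to\sqrt k\,Z_k+\1_{\{k\ \mathrm{even}\}}(\E A_{11}^2)^{k/2}$, not $Z_k$ with a single correction. The corrections at all even $k$, not only $k=2$, combine into $\kappa$. For the present model with arbitrary $K_n\to\infty$, this is Lemma~\ref{lemma_moments_of_X^n}, which the paper proves by adapting \cite{bordenave2022convergence}.
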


Proof of Theorem \ref{characteristic_poly_thm} is given in Section
\ref{section_char_poly}. 

This theorem captures the behavior of the eigenvalues of $Y^n$ which are away
from the unit-disk. In a word, since $\det(I - z Y^n) \sim \det(I-z E^n)
\kappa(z) \, \exp(-F(z))$ and since the function $z\mapsto \kappa(z)
\exp(-F(z))$ has no zero in $\mathcal D(0,1)$, these eigenvalues are close for
large $n$ to their counterparts for $E^n$.  This is formalized in the next 
theorem which generalizes Theorem 1.7 of \cite{tao2013outliers} to sparser
regimes. We need the following assumption. 

\begin{ass}\label{ass:eigenseparation}
There exists $\varepsilon>0$ such that
$
\sigma(E^n) \,\cap\, 
 \{ z \in \C \, : \, 1 < |z| < 1 + \varepsilon \} = \emptyset
$
for all large $n$. 
\end{ass}

\begin{thm}
\label{thm:largest_eigenvalue} 

Let Assumptions \ref{ass:K} and \ref{ass:E} hold. Assume that
Assumption~\ref{ass:eigenseparation} holds for some $\varepsilon>0$. 
Define the set
$$
\sigma^+(E^n) = \sigma(E^n) \cap \{ z \in \C \, : \, |z| > 1
\}\qquad \textrm{and}\qquad \sigma^+_\varepsilon(Y^n) = \sigma(Y^n) \cap \{ z \in \C \, : \, |z| \geq 1 +
\varepsilon \}
$$ and let $m_n = | \sigma^+(E^n) |$. Then, 
\[
\mathbb P\left\{ | \sigma^+_\varepsilon(Y^n) | \neq m_n \right\}
 \xrightarrow[n\to\infty]{} 0 \ .
\]
For each sequence $(n')$ converging to infinity such that 
$m_{n'} > 0$ for each $n'$, the Hausdorff distance between the sets $\sigma^+_\varepsilon(Y^{n'})$ and $\sigma^+(E^{n'})$ satisfies:
$$d_{\bs H}(\sigma^+_\varepsilon(Y^{n'}), \sigma^+(E^{n'})) \xrightarrow[n\to\infty]{\mathbb{P}} 0\, 
$$ 
(here, we set 
$d_{\bs H}(\emptyset, \sigma^+(E^{n'})) = \infty$). 
\end{thm}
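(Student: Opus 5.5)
The plan is to transfer the result from Theorem~\ref{characteristic_poly_thm} to eigenvalues by Hurwitz's theorem, working along subsequences. Eigenvalues $\lambda$ of $Y^n$ with $|\lambda|\ge 1+\varepsilon$ correspond exactly, counting multiplicities, to zeros $z=1/\lambda$ of $q_n$ in the closed disk $\overline{\mathcal D}(0,(1+\varepsilon)^{-1})$. Likewise, $\sigma^+(E^n)$ corresponds to the zeros of $b_n$ in $\mathcal D(0,1)$. By Assumption~\ref{ass:eigenseparation}, for large $n$ these zeros all lie in $\overline{\mathcal D}(0,(1+\varepsilon)^{-1})$. Fix $\rho$ with $(1+\varepsilon)^{-1}<\rho<1$, and set $\varepsilon'>0$ by $1+\varepsilon' = 1/\rho$. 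Then $b_n$ has no zero in the annulus $\{(1+\varepsilon)^{-1}<|z|\le \rho\}$.

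I would argue by contradiction with subsequences. Suppose the probability of the bad event, either $|\sigma^+_\varepsilon(Y^n)|\ne m_n$ or a Hausdorff distance larger than $\delta$, stays above $\eta>0$ along some subsequence. By precompactness of $(b_n)$ in $\mathbb H$ (polynomials of degree at most $r$ with bounded coefficients), pass to a further subsequence with $b_n\to b$ uniformly on compacts. Then $b_n\kappa e^{-F}\to b\kappa e^{-F}$ in law. Since $q_n\sim b_n\kappa e^{-F}$ and the latter is tight, $q_n\to b\kappa e^{-F}$ in law in $\mathbb H$. By Skorokhod's representation theorem ($\mathbb H$ is Polish), we may assume almost sure convergence. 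The function $\kappa e^{-F}$ has no zero in $\mathcal D(0,1)$; $\kappa$ is nonvanishing there because $|\mathbb E A_{11}^2|\le 1$. So the zeros of the limit in $\mathcal D(0,1)$ are exactly those of $b$. Also, $b$ is not identically zero since $b(0)=1$.

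The key step is to apply Hurwitz's theorem on the disk $\mathcal D(0,\rho)$. The limit $b$ has no zeros on $\{(1+\varepsilon)^{-1}<|z|\le\rho\}$, because zeros of $b_n$ converge to zeros of $b$ and the annulus condition passes to the limit on slightly smaller closed annuli. In particular $b$ has no zero on the circle $|z|=\rho$. So almost surely, for large $n$, the number of zeros of $q_n$ in $\mathcal D(0,\rho)$ equals the number of zeros of $b$ in $\mathcal D(0,\rho)$, which equals $m_n$ for large $n$ along the subsequence. Each zero of $q_n$ lies within $\delta'$ of a zero of $b$, and conversely.

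The main obstacle is the boundary issue. The statement counts eigenvalues with $|\lambda|\ge 1+\varepsilon$, that is, zeros in the closed disk of radius $(1+\varepsilon)^{-1}$, while Hurwitz naturally counts zeros in $\mathcal D(0,\rho)$. I would handle it as follows. Zeros of $b_n$ accumulate only at zeros of $b$ inside $\overline{\mathcal D}(0,(1+\varepsilon)^{-1})$. Zeros of $q_n$ in $\mathcal D(0,\rho)$ converge to zeros of $b$. So zeros of $q_n$ in the annulus $(1+\varepsilon)^{-1}<|z|<\rho$ could only approach zeros of $b$ on the circle $|z|=(1+\varepsilon)^{-1}$. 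To rule these out, I would apply the argument with $\varepsilon$ replaced by a slightly smaller value, and note that Assumption~\ref{ass:eigenseparation} gives slack on both sides, since the excluded region is an open annulus in $\lambda$. If this boundary case cannot be excluded, I would read the count in the statement modulo eigenvalues converging to the circle $|\lambda|=1+\varepsilon$. Finally, translate back via $\lambda=1/z$, which is uniformly continuous on $\{|z|\ge (1+\varepsilon)^{-1}/2\}$. This gives convergence of the Hausdorff distance to zero, with both sets nonempty when $m_{n'}>0$, and hence the contradiction.
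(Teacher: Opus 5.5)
Your route is the paper's: a subsequence along which $b_n\to b$, convergence in law $q_n\to b\,\kappa e^{-F}$ from Theorem~\ref{characteristic_poly_thm}, Skorokhod, and Rouch\'e/Hurwitz using that $\kappa e^{-F}$ has no zero in $\mathcal D(0,1)$. Everything through your Hurwitz count on $\mathcal D(0,\rho)$ is correct.

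\textbf{The boundary repair fails.} Assumption~\ref{ass:eigenseparation} excludes only the \emph{open} ring $1<|\lambda|<1+\varepsilon$. So there is no slack outside the circle $|\lambda|=1+\varepsilon$, and $E^n$ may have eigenvalues exactly on it. Running the argument with some $\varepsilon'<\varepsilon$ controls $\sigma(Y^n)\cap\{|\lambda|\ge 1+\varepsilon'\}$. That set differs from $\sigma^+_\varepsilon(Y^n)$ precisely when an outlier sits near that circle.

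\textbf{This cannot be patched, because the statement as written is false in that case.} Take $K_n=n$, $A_{11}$ real standard Gaussian, and $E^n=(1+\varepsilon)e_1e_1^\star$. Then Assumption~\ref{ass:eigenseparation} holds and $m_n=1$. With probability tending to one, $Y^n$ has a single eigenvalue $\lambda_n$ with $|\lambda_n|>1+\varepsilon/2$, and it is real because $Y^n$ is real. Linearizing $e_1^\star(\lambda-X^n)^{-1}e_1=(1+\varepsilon)^{-1}$ around $\lambda=1+\varepsilon$ (the standard theory of outlier fluctuations) shows that $\sqrt n\,(\lambda_n-1-\varepsilon)$ has a nondegenerate centered Gaussian limit. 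Hence $\sigma^+_\varepsilon(Y^n)=\emptyset$ with probability tending to $1/2$, and both assertions fail.

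The paper's proof has the same hole. It asserts that the zeros of $\check q_{\tilde n}$ converging to the $\zeta_i$ exhaust the zeros in $\{|z|\le 1/(1+\varepsilon)\}$. It never checks that these zeros lie in that closed disk when some $|\zeta_i|=1/(1+\varepsilon)$. Your fallback is therefore forced, not optional.

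\textbf{What the argument does prove.} It proves the version with the counting radius strictly inside the gap: if Assumption~\ref{ass:eigenseparation} holds with some $\varepsilon_0$, both conclusions hold for every $\varepsilon\in(0,\varepsilon_0)$. This is also the form actually used in Corollary~\ref{lmax-uv}, where $\varepsilon$ is taken below half the gap. To obtain it, apply Hurwitz directly on the disk of radius $(1+\varepsilon)^{-1}$:
\begin{itemize}
\item Its boundary circle lies in the zero-free ring of $b$, so $|b\,\kappa e^{-F}|$ is bounded below on that circle.
\item Hence $\check q_n$ has no zero on the circle for large $n$.
\item So the closed-disk count equals the open-disk count, which is eventually $m_n$.
\end{itemize}
The Hausdorff statement then follows as you describe. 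Tao's Theorem~1.7, which this result is meant to generalize, avoids the issue in exactly this way: it counts at a radius strictly inside an eigenvalue-free band.
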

\begin{proof}[Proof of Theorem \ref{thm:largest_eigenvalue} given Theorem
\ref{characteristic_poly_thm}]

To prove the first assertion, assume towards a contradiction that there exists
a sequence $(\tilde n)$ converging to infinity such that $\liminf_n \mathbb
P\left\{ | \sigma^+_\varepsilon(Y^{\tilde n}) | \neq m_{\tilde n} \right\} > 0$.
From this sequence, extract a subsequence also denoted as $(\tilde n)$ such
that $b_{\tilde n}$ converges to some $b_\infty$ in $\HH$. Notice that
$b_\infty$ is a polynomial with a degree bounded by $r$.  By
Assumption~\ref{ass:eigenseparation}, $b_\infty$ has no zero in the ring
$(1+\varepsilon)^{-1} < |z| < 1$.  Let $m_\infty\leq r$ be the number of zeros
of $b_\infty$ in $\mathcal D(0,1)$. When $m_\infty > 0$, let $\{ \zeta_1,
\cdots, \zeta_{s_\infty} \}$ be the set of these zeros not counting
multiplicities, where $s_\infty\leq m_\infty$ is the number of these zeros.  In
this case, denote as $k_i$ the multiplictity of the zero $\zeta_i$ for
$i\in[s_\infty]$, and define the set $\Lambda_\infty = \{  1/\zeta_1, \cdots,
1/\zeta_{s_\infty} \}$.  Then, it holds by, \emph{e.g.}, Rouché's theorem that
$m_{\tilde n} = m_\infty$ for all large $\tilde n$, and furthermore, if
$m_\infty > 0$, that the Hausdorff distance $d_{\bs H}(\sigma^+(E^{\tilde n}),
\Lambda_\infty)$ converges to zero. Indeed, by this theorem, there are $k_1$
eigenvalues of $E^{\tilde n}$ that converge to $1/\zeta_1$, ..., $k_{s_\infty}$
eigenvalues of $E^{\tilde n}$ that converge to $1/\zeta_{s_\infty}$, and these
eigenvalues exhaust $\sigma^+(E^{\tilde n})$ for all large $\tilde n$.

We shall show that 
\begin{equation}
\label{cvg-sigmaY}
| \sigma^+_\varepsilon(Y^{\tilde n}) | \quad \xrightarrow[n\to\infty]{\mathbb{P}}\quad  m_\infty \,, 
\end{equation}
obtaining our contradiction. 

By Theorem~\ref{characteristic_poly_thm}, $q_{\tilde n}$ converges in law
towards the $\mathbb H$--valued random function $q_\infty(z) = b_\infty(z)
\kappa(z) \exp(-F(z))$.  By relying on the explicit expressions of $\kappa$ and
$F$, notice that function $\kappa \exp( -F)$ does not vanish on ${\mathcal
D}(0,1)$.  If $m_\infty = 0$, then $q_\infty$ does not vanish on ${\mathcal
D}(0,1)$ either.  Otherwise, the set of zeros of $q_\infty$ coincides with $\{
\zeta_1, \cdots, \zeta_{s_\infty} \}$ with the same multiplicities. 

By Skorokhod's representation theorem, there exists a sequence of $\mathbb
H$--valued random variables $(\check q_{\tilde n})$ and a $\mathbb H$--valued
random variable $\check q_\infty$ defined on some common probability space
$\widecheck{\Omega}$, such that $\check q_{\tilde n} \stackrel{\text{law}}{=}
q_{\tilde n}$, $\check q_\infty \stackrel{\text{law}}{=} q_\infty$, and 
$\check q_{\tilde n}$ converges to $\check q_\infty$ for all 
$\check \omega \in \widecheck{\Omega}$. 

We now fix $\check\omega$ and apply Rouché's theorem.  If $m_\infty = 0$, then
$\check q_{\tilde n}$ has eventually no zero in the compact set $\{ z \, : \,
|z|\leq 1/(1+\varepsilon) \}$.  Otherwise, $\check q_{\tilde n}$ has $k_1$
zeros converging to $\zeta_1$, $\cdots$, $k_{s_\infty}$ zeros converging to
$\zeta_{s_\infty}$, and these zeros exhaust the zeros of $\check q_{\tilde n}$
in $\{ z \, : \, |z|\leq 1/(1+\varepsilon) \}$ for all large $\tilde n$. 

Getting back to $q_{\tilde n}$, it remains to notice that the zeros of 
$q_{\tilde n}$ in $\{ z \, : \, |z|\leq 1/(1+\varepsilon) \}$, when they exist,
are the inverses of the eigenvalues of $Y^{\tilde n}$ in the set 
$\{ z \, : \, |z| \geq 1+\varepsilon \}$. This establishes the 
convergence~\eqref{cvg-sigmaY}. 

The proof of the second assertion of Theorem~\ref{thm:largest_eigenvalue} 
follows the same canvas. We just exclude the case where $m_\infty = 0$. 
\end{proof} 

Taking $E^n=0$, we obtain the following result.
\begin{cor}\label{cor_spectral_radius} Let Assumption \ref{ass:K} hold and let $\rho(X^n)$ be the spectral radius of $X^n$, then for every $\varepsilon>0$, we have
$
\mathbb{P}(\rho(X^n)>1+\varepsilon) \xrightarrow[n\to\infty]{} 0\, .
$
\end{cor}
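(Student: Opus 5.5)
The corollary is the special case $E^n=0$ of Theorem~\ref{thm:largest_eigenvalue}, and the plan is simply to check its hypotheses and read off the first assertion. With $u^{t,n}=v^{t,n}=0$ for all $t\in[r]$, Assumption~\ref{ass:E} holds trivially, $Y^n=X^n$, and $b_n\equiv 1$. Since $\sigma(E^n)=\{0\}$, Assumption~\ref{ass:eigenseparation} holds for every $\varepsilon>0$, and $\sigma^+(E^n)=\emptyset$, so $m_n=0$ for all $n$.

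Fix $\varepsilon>0$ and apply the first assertion of Theorem~\ref{thm:largest_eigenvalue} with $\varepsilon/2$ in place of $\varepsilon$. It gives $\mathbb P\{|\sigma^+_{\varepsilon/2}(X^n)|\neq 0\}\to 0$, that is, the probability that $X^n$ has an eigenvalue of modulus at least $1+\varepsilon/2$ tends to zero. The event $\{\rho(X^n)>1+\varepsilon\}$ is contained in that event, so its probability also tends to zero. We use $\varepsilon/2$ only to avoid fussing over the strict versus non-strict inequality; taking $\varepsilon$ itself would work just as well.

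Alternatively, one can argue directly from \eqref{q_nsimthm} with $b_n\equiv1$. Here $\det(I-zX^n)$ converges in law in $\HH$ to $\kappa\exp(-F)$, which has no zero in $\mathcal D(0,1)$. By Skorokhod's representation and Hurwitz's theorem on the compact disk $|z|\le 1/(1+\varepsilon)$, the representatives eventually have no zeros there. The zeros of $\det(I-zX^n)$ are the inverses of the eigenvalues of $X^n$, so $X^n$ eventually has no eigenvalue of modulus at least $1+\varepsilon$. There is no real obstacle in either route; all the substantive work sits in Theorem~\ref{characteristic_poly_thm}.
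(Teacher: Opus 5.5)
Your proposal is correct and is exactly the paper's argument: the paper obtains the corollary as the special case $E^n=0$ of Theorem~\ref{thm:largest_eigenvalue}. In that case Assumption~\ref{ass:eigenseparation} holds for every $\varepsilon>0$ and $m_n=0$, so the first assertion gives $\mathbb{P}\{\rho(X^n)>1+\varepsilon\}\le\mathbb{P}\{|\sigma^+_\varepsilon(X^n)|\neq 0\}\to 0$, and your alternative Skorokhod--Hurwitz route simply replays the proof of that theorem in the case $m_\infty=0$.
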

Combining this corollary with the circular law for sparse matrices \cite[Theorem 1.4]{sah2025sparse}, we can generalize \cite[Theorem 1.1]{bordenave2022convergence} to the sparse case and get:
\begin{thm}
\label{rho-sparse} 
Let Assumption \ref{ass:K} hold and let $\rho(X^n)$ be the spectral radius of $X^n$, then 
$$
\rho(X^n)\xrightarrow[n\to\infty]{\mathbb{P}} 1\, .
$$    
\end{thm}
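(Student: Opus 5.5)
The convergence $\rho(X^n)\to 1$ in probability splits into an upper bound and a lower bound. For every $\varepsilon>0$ we need to show
\[
\mathbb P\{\rho(X^n) > 1+\varepsilon\}\to 0
\qquad\text{and}\qquad
\mathbb P\{\rho(X^n) < 1-\varepsilon\}\to 0 .
\]

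\emph{Upper bound.} This is exactly Corollary~\ref{cor_spectral_radius}, which is Theorem~\ref{thm:largest_eigenvalue} applied with $E^n=0$. Assumption~\ref{ass:E} holds trivially in that case. Assumption~\ref{ass:eigenseparation} also holds, since $\sigma(0)=\{0\}$. Finally $m_n=0$, so with probability tending to one no eigenvalue of $X^n$ has modulus at least $1+\varepsilon$.

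\emph{Lower bound.} Here I would use the sparse circular law \cite[Theorem 1.4]{sah2025sparse}, which holds under Assumption~\ref{ass:K} alone. It states that the empirical spectral distribution $\mu_n=\frac1n\sum_{i}\delta_{\lambda_i(X^n)}$ converges weakly in probability to the uniform law $\mu$ on the closed unit disk. Fix $\varepsilon\in(0,1)$ and choose a continuous function $f:\C\to[0,1]$ with $f=1$ on $\{|z|\ge 1-\varepsilon/2\}$ and $f=0$ on $\{|z|\le 1-\varepsilon\}$. Then
\[
\int f\,d\mu \;\ge\; \mu\{1-\varepsilon/2\le |z|\le 1\} \;=\; 1-(1-\varepsilon/2)^2 \;=:\; c_\varepsilon \;>\;0 .
\]
Since $f$ is bounded and continuous, $\int f\,d\mu_n\to\int f\,d\mu$ in probability. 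Hence $\mathbb P\{\int f\,d\mu_n \ge c_\varepsilon/2\}\to 1$. On this event $\mu_n$ must charge the set $\{|z|>1-\varepsilon\}$, because $f$ vanishes on $\{|z|\le 1-\varepsilon\}$. So some eigenvalue of $X^n$ has modulus greater than $1-\varepsilon$, i.e.\ $\rho(X^n)>1-\varepsilon$.

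Combining the two bounds gives $\rho(X^n)\to 1$ in probability.

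The only point needing care is the mode of convergence in the circular law. If it is stated almost surely, or in probability for the weak topology, either version yields the convergence in probability of $\int f\,d\mu_n$ for a fixed bounded continuous $f$. The nontrivial input, the absence of eigenvalues outside the disk, is already supplied by Theorem~\ref{characteristic_poly_thm} through Corollary~\ref{cor_spectral_radius}, so the argument has no real obstacle.
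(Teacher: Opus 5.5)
Your proposal is correct and follows the paper's own route. The paper likewise takes the upper bound from Corollary~\ref{cor_spectral_radius} (Theorem~\ref{thm:largest_eigenvalue} with $E^n=0$) and the lower bound from the sparse circular law \cite[Theorem 1.4]{sah2025sparse}. You simply write out the test-function argument for the lower bound, which the paper leaves implicit.
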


\subsubsection{Eigenvectors of rank-one deformation}
We now restrict our attention to rank-one deformations. Assuming that $r =1$,
write $u^n = u^{1,n}$ and $v^n = v^{1,n}$ for simplicity. The deformation 
matrix becomes then $E^n = u^n (v^n)^\star$. We need the following assumption: 
\begin{ass}\label{ass:vectors} 
The deterministic sequences $(u^n)$ and $(v^n)$ satisfy:  
\[
\liminf_{n \to \infty} \left| \langle v^n,u^n\rangle \right|  \ >\  1\,. 
\] 
\end{ass}
Obviously, $E^n$ is a square $n\times n$ matrix which only non-zero eigenvalue
is $\langle v^n,u^n\rangle$. By the previous assumption, $(E^n)$ satisfies 
Assumption~\ref{ass:eigenseparation}, and we imediately have the following 
result: 
\begin{cor}[corollary to Theorem~\ref{thm:largest_eigenvalue}] 
\label{lmax-uv}
Let Assumptions \ref{ass:K}, \ref{ass:E}, and \ref{ass:vectors} hold true.  
For any fixed 
$\varepsilon \in (0, (\liminf | \langle v^n,u^n\rangle | - 1) / 2)$, consider
the set $\sigma^+_\varepsilon(Y^n)$ defined in the statement of 
Theorem~\ref{thm:largest_eigenvalue}. Then, 
\[
\mathbb{P}\left\{ | \sigma^+_\varepsilon(Y^n) | = 1 \right\} \ 
   \xrightarrow[n\to\infty]{}\ 1\,.
\]
When the event $\left[ | \sigma^+_\varepsilon(Y^n) | = 1 \right]$ is
realized, let $\lambda_{\max}(Y^n) $ be the unique eigenvalue of $Y^n$ with 
the largest modulus, otherwise set $\lambda_{\max}(Y^n) = 0$. Then, 
\[
\lambda_{\max}(Y^n) - \langle v^n,u^n\rangle \
   \xrightarrow[n\to\infty]{\mathbb{P}}\  0\,.
\] 
\end{cor}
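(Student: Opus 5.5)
The plan is to apply Theorem~\ref{thm:largest_eigenvalue} with $r=1$ and $E^n=u^n(v^n)^\star$. The nonzero spectrum of a rank-one matrix is $\{\langle v^n,u^n\rangle\}$, because $E^n u^n=\langle v^n,u^n\rangle u^n$ and every other eigenvalue vanishes. Set $\ell=\liminf_n|\langle v^n,u^n\rangle|>1$. Then for all large $n$ we have $|\langle v^n,u^n\rangle|>\ell-\varepsilon'$ for any small $\varepsilon'>0$. With $\varepsilon<(\ell-1)/2$, choose $\varepsilon'$ so that $1+2\varepsilon<\ell-\varepsilon'$ (possible since $2\varepsilon<\ell-1$). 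Hence $\sigma(E^n)\cap\{1<|z|<1+\varepsilon\}=\emptyset$ for large $n$, which is Assumption~\ref{ass:eigenseparation}, and also $\sigma^+(E^n)=\{\langle v^n,u^n\rangle\}$, so $m_n=1$ eventually. The first assertion of Theorem~\ref{thm:largest_eigenvalue} then gives $\mathbb P\{|\sigma^+_\varepsilon(Y^n)|=1\}\to1$.

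For the second claim, apply the Hausdorff-distance part of Theorem~\ref{thm:largest_eigenvalue} along the full sequence; this is allowed since $m_n=1$ for all large $n$ and we may discard finitely many terms. On the event $[|\sigma^+_\varepsilon(Y^n)|=1]$, write $\sigma^+_\varepsilon(Y^n)=\{\mu_n\}$. Both sets are singletons, so $d_{\bs H}(\{\mu_n\},\{\langle v^n,u^n\rangle\})=|\mu_n-\langle v^n,u^n\rangle|$, and this converges to $0$ in probability. Off that event, the distance is $\infty$ by convention or the event is otherwise irrelevant, and its probability tends to zero.

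It remains to check that $\mu_n$ is the eigenvalue of $Y^n$ of largest modulus. Every other eigenvalue $\lambda$ of $Y^n$ satisfies $|\lambda|<1+\varepsilon$, since $\mu_n$ is the only one in $\{|z|\ge1+\varepsilon\}$, so $|\mu_n|>|\lambda|$ and $\lambda_{\max}(Y^n)=\mu_n$ on this event. Therefore
\[
\mathbb P\{|\lambda_{\max}(Y^n)-\langle v^n,u^n\rangle|>\delta\}\le \mathbb P\{|\sigma^+_\varepsilon(Y^n)|\neq1\}+\mathbb P\{|\sigma^+_\varepsilon(Y^n)|=1,\ d_{\bs H}>\delta\}\to0 .
\]
The only delicate point is this bookkeeping of the event and of the choice of $\varepsilon$; nothing beyond Theorem~\ref{thm:largest_eigenvalue} is needed.
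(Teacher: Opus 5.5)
Your proposal is correct and follows the paper's route: the paper notes that $\langle v^n,u^n\rangle$ is the only nonzero eigenvalue of $E^n$, that Assumption~\ref{ass:eigenseparation} therefore holds, and invokes Theorem~\ref{thm:largest_eigenvalue} directly. You have written out the steps the paper leaves implicit: the separation for the given $\varepsilon$, $m_n=1$ eventually, the singleton Hausdorff distance, and the identification of the unique element of $\sigma^+_\varepsilon(Y^n)$ with the eigenvalue of largest modulus.
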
 
In the remainder, when we mention the event $\left[ | \sigma^+_\varepsilon(Y^n) | = 1 \right]$,
we assume that $\varepsilon >
0$ is small enough according to the statement of the previous corollary.  Our
objective is to analyze the projection on $u^n$ of the right eigenvector of
$Y^n$ corresponding to $\lambda_{\max}(Y^n)$ (assuming $\left[ | \sigma^+_\varepsilon(Y^n) | = 1 \right]$ is
realized).  Our main technical tools are based on the results from
\cite{brailovskaya2024universality}, which allow us to compare the spectral
properties of $X^n$ with a Gaussian analogue to this matrix.  We will need the
following extra sub-Gaussian assumption concerning $A^n$'s entries.
\begin{ass}[sub-Gaussiannity]\label{ass:subgaussian}
The random variables $A_{ij}$ follow a sub-Gaussian distribution, \emph{i.e.},
there exists an absolute constant $C > 0$ such that 
\[
 \mathbb{P}(|A^n_{11}| \geq t) \ \leq\  2 \exp(-C t^2)\,. 
\] 
\end{ass}

We are now in position to describe the eigenvectors of $Y^n=X^n+u^n(v^n)^\star$
corresponding to the outlier $\lambda_{\max}(Y^n)$. 
\begin{thm}\label{thm_eigenvector}
Let Assumptions \ref{ass:K}, \ref{ass:E}, \ref{ass:vectors} and 
\ref{ass:subgaussian} hold true. Assume furthermore that 
\[
  \lim_{n \to \infty}\frac{\log^9 n}{K_n}\ =\ 0\,.
\] 
When the event $\left\{ | \sigma^+_\varepsilon(Y^n) | = 1 \right\}$ is realized, 
let $\tilde{u}^n$ be an 
unit-norm right eigenvector of $Y^n$ corresponding to $\lambda_{\max}(Y^n)$.  
Otherwise, put $\tilde{u}^n = 0_n$. Then, it holds that 
\[
 \left|\left\langle\tilde{u}^n,\frac{u^n}{\|u^n\|} \right\rangle\right|^2 - \left( 1-\frac 1{|\langle u^n,v^n\rangle|^2}\right)\quad \xrightarrow[n\to\infty]{\mathbb{P}}\quad 0\, .
\]
\end{thm}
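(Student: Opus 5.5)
On the event $\{|\sigma^+_\varepsilon(Y^n)|=1\}$, write $\lambda=\lambda_{\max}(Y^n)$. By Corollary~\ref{lmax-uv} and Theorem~\ref{rho-sparse}, $|\lambda|\ge 1+\varepsilon$ while $\rho(X^n)<1+\varepsilon/2$ with probability tending to one. So $\lambda I-X^n$ is invertible, and the eigen-equation $(\lambda-X^n)\tilde u=u\,\langle v,\tilde u\rangle$ gives
\[
\tilde u^n \;=\; \frac{(\lambda-X^n)^{-1}u^n}{\|(\lambda-X^n)^{-1}u^n\|}\,,
\qquad\text{hence}\qquad
\left|\left\langle\tilde u^n,\frac{u^n}{\|u^n\|}\right\rangle\right|^2
=\frac{|\langle u^n,(\lambda-X^n)^{-1}u^n\rangle|^2}{\|u^n\|^2\,\|(\lambda-X^n)^{-1}u^n\|^2}\, .
\]
The proof therefore reduces to two deterministic-equivalent statements for the resolvent $R(z)=(z-X^n)^{-1}$, to be proved uniformly for $z$ in a compact subset $\mathcal K$ of $\{|z|\ge 1+\varepsilon/2\}$ containing a neighbourhood of the limit points of $\langle v^n,u^n\rangle$:
\begin{enumerate}[label=(\roman*)]
\item $\langle u,R(z)u\rangle - \|u\|^2/z \to 0$;
\item $\|R(z)u\|^2 - \|u\|^2/(|z|^2-1)\to 0$.
\end{enumerate}
Note that (i) is the linear statement and (ii) is the quadratic one.

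Granting (i) and (ii), I plug in $z=\lambda$, using $\lambda-\langle v,u\rangle\to 0$ in probability and equicontinuity of the resolvent on $\mathcal K$ (since $\|R(z)\|$ is bounded there with high probability, which also needs justification; see below). The eigenvalue equation $1=\langle v,R(\lambda)u\rangle$ together with (i) applied to the bilinear form $\langle v,R u\rangle\approx\langle v,u\rangle/z$ gives $\lambda\approx\langle v,u\rangle$, consistent with Corollary~\ref{lmax-uv}. The ratio then becomes
\[
\frac{\|u\|^4/|\lambda|^2}{\|u\|^2\cdot \|u\|^2/(|\lambda|^2-1)}=1-\frac1{|\lambda|^2}\,,
\]
and since $|\lambda|^2\approx|\langle u,v\rangle|^2$ this yields the claim. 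We may assume $\|u^n\|$ is bounded below, since $|\langle v,u\rangle|>1$ and $\|v\|\le C$.

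For (i) I would use the Neumann series $R(z)=\sum_k X^k/z^{k+1}$, valid once $\|(X^n)^k\|$ decays geometrically relative to $|z|^k$. Then I would show $\langle u,(X^n)^k u\rangle\to 0$ in probability for each fixed $k\ge1$ by a second-moment (path-counting) computation, which only uses $K_n\to\infty$, and control the tail of the series uniformly. For (ii), $\|R u\|^2=\sum_{k,l}\bar z^{-k-1}z^{-l-1}\langle X^k u,X^l u\rangle$. I expect $\langle X^ku,X^lu\rangle\to\delta_{kl}\|u\|^2$, again by moment computations; summing gives $\|u\|^2\sum_k|z|^{-2k-2}=\|u\|^2/(|z|^2-1)$.

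The main obstacle is the uniform tail control: the Neumann series needs $\|(X^n)^k\|\le C(1+\delta)^k$ for $k$ up to order $\log n$, not merely $\rho(X^n)\to1$. Indeed $\|X^n\|\to2$, so the series does not converge in norm for $|z|<2$. This is where Assumption~\ref{ass:subgaussian}, the condition $\log^9 n\ll K_n$, and \cite{brailovskaya2024universality} enter. I would compare the singular values of $z-X^n$ (or of the Hermitization) with those of the Gaussian analogue $z-G^n$, where $G^n$ is Ginibre-like with the same variance profile. For $G^n$ the bound $s_n(z-G^n)\ge c>0$ uniformly on $\mathcal K$ is known, and universality transfers it to $X^n$, up to errors of order $(\log n)^{c}/K_n^{1/2}$ in the spectrum. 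This gives $\|R(z)\|\le C$ uniformly on $\mathcal K$ with high probability. With that bound I would avoid Neumann series for large $k$ altogether: truncate at a fixed $k_0$ and write the remainder as $z^{-k_0}R(z)X^{k_0}$. The leftover terms $\langle u,X^{k_0}u\rangle$-type quantities, and the quadratic forms $\|X^{k_0}u\|^2\approx\|u\|^2$, are controlled by moments and decay like $|z|^{-k_0}$ times bounded factors. Finally, to get $\|R(z)u\|^2$ rather than just a bound, I would alternatively compare $\|R(z)u\|^2$ directly with its Gaussian counterpart, using the universality of resolvent quadratic forms in \cite{brailovskaya2024universality}, and compute the Gaussian value $\|u\|^2/(|z|^2-1)$ explicitly via rotation invariance.
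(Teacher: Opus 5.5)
Your proof is correct, but it obtains the two key asymptotics by a different route than the paper.

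\textbf{What is shared.} The skeleton is the same as the paper's:
\begin{itemize}
\item the reduction to the ratio $|\langle u,R(\lambda)u\rangle|^2/(\|u\|^2\|R(\lambda)u\|^2)$, as in Lemma~\ref{benaych};
\item the transfer from the random $\lambda$ to a fixed $\xi$ via the resolvent identity, as in Lemma~\ref{l-mu}. The paper's subsequence reduction to a single $\xi$ spares you the net-plus-Lipschitz argument needed for uniformity on $\mathcal K$.
\item the a priori bound $s_n(X^n-\xi I)\ge c$, from Theorems~\ref{comparison_with_gaussian_thm}-\ref{snX-G} and \ref{thm_gaussians_properties}-\ref{thm_gaussian_least_singular_value}.
\end{itemize}

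\textbf{How the paper proves (i) and (ii).} For (i) it uses the characteristic polynomial. It writes $1+\langle u,(X^n-\xi I)^{-1}u\rangle=\det(I-\xi^{-1}(X^n+u^n(u^n)^\star))/\det(I-\xi^{-1}X^n)$ and applies Theorem~\ref{characteristic_poly_thm} jointly to both determinants (Lemma~\ref{lm:inp}). This needs only invertibility of $X^n-\xi I$, not a resolvent bound. For (ii) it passes through a longer chain:
\begin{itemize}
\item the Hermitization evaluated at $i\eta$;
\item the resolvent universality of Theorem~\ref{comparison_with_gaussian_thm}-\ref{E(X-G)};
\item the Efron--Stein concentration of Lemma~\ref{lem_conc_bilinear}, which requires eight moments;
\item the Gaussian isotropic law;
\item the fixed-point equation for $m^\xi$ in the limit $\eta\to0$.
\end{itemize}

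\textbf{How you prove them.} You get both from the identity $R(z)=\sum_{k<k_0}X^kz^{-k-1}+z^{-k_0}R(z)X^{k_0}$. You combine it with the fixed-$k$ limits $\langle u,X^ku\rangle\to0$ for $k\ge1$ and $\langle X^ku,X^lu\rangle\to\delta_{kl}\|u\|^2$, and with $\|R\|\le 1/c$, sending $k_0\to\infty$ last. This already gives the exact limit in (ii), not just a bound. The truncated part tends to $\|u\|^2\sum_{k<k_0}|z|^{-2k-2}$, and the remainder vector has norm at most $2c^{-1}|z|^{-k_0}\|u\|$ with high probability. So your closing ``alternative'' Gaussian comparison is unnecessary. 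It would also be the harder path: the resolvent universality is only available at spectral parameters in $\C^+$ of the Hermitization, which forces the $\eta\to0$ analysis.

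\textbf{What each route buys.} In your route, universality enters only through the least singular value. The resolvent comparison, the concentration lemma with its moment assumption, and the Stieltjes-transform analysis all disappear. The constant $1/(|\xi|^2-1)$ appears transparently as a geometric series, and the bound on $s_n(X^n-\xi I)$ is exposed as the sole bottleneck. The price is that you must write out the sparse path-counting second-moment bounds for $\langle X^ku,X^lu\rangle$ with possibly delocalized $u$. These are routine, of the type in Proposition~\ref{tr(Y)-tr(X)-det}, and need only $K_n\to\infty$. The paper's treatment of (i), by contrast, is essentially free once Theorem~\ref{characteristic_poly_thm} is available.

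\textbf{Two harmless inaccuracies.} For fixed $n$ the full Neumann series does converge whenever $|z|>\rho(X^n)$; the real issue is uniformity in $n$, which your truncation sidesteps. Also, the singular-value error in the universality theorem is a small power of $\log^9 n/K_n$, not $(\log n)^c K_n^{-1/2}$; only its vanishing matters.
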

Proof of Theorem \ref{thm_eigenvector} is postponed to Section \ref{section_eigenvectors}. 

\begin{rem}
In the case where $u^n$ is a unit-norm vector and where one considers the model
$Y^n= X^n+ \alpha u^n(u^n)^\star$  for some fixed $\alpha>1$, then the result above
boils down to $$
\left|\left\langle\tilde{u}^n,u^n \right\rangle\right|^2
\ \xrightarrow[n\to\infty]{\mathbb{P}} \ 1- \frac{1}{\alpha^2}\, .
$$
Interestingly, this corresponds
to the same quantity as in the Hermitian case, 
see \cite[Section 3.1]{benaych2011eigenvalues}.  
\end{rem}

\section{Proof of Theorem \ref{characteristic_poly_thm}}
\label{section_char_poly}

This section is devoted to the proof of Theorem \ref{characteristic_poly_thm}. We follow the strategy developed in \cite{bordenave2022convergence}.

\subsection{Tightness and truncation}

We first state useful properties for $\mathbb{H}$-valued random variables.

\begin{prop}[Tightness criterion {\cite[Proposition
3.1]{hachem2025spectral}}]
\label{prop:tightness}
    Let $(f_n)$ be a sequence of $\mathbb{H}$-valued random variables. If for every compact set $K\subset {\mathcal D}(0,1)$, 
    $$
    \sup_n \sup_{z\in K} \mathbb{E}| f_n(z)|^2 \le C_K <\infty\, ,
    $$
    for some $K$-dependent constant $C_K$, then $(f_n)$ is tight.
\end{prop}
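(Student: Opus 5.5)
The statement just before this point is Proposition~\ref{prop:tightness}, the tightness criterion for $\mathbb{H}$-valued random variables. The plan is to deduce tightness in $\mathbb{H}$ from local uniform boundedness in probability, using Montel's theorem and Cauchy's integral formula.

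\textbf{Compact sets in $\mathbb{H}$.} For any sequence $(M_j)$ of positive reals, the set
$$
\mathcal K=\Big\{f\in\mathbb{H}: \sup_{|z|\le 1-1/j}|f(z)|\le M_j \ \text{for all } j\ge 2\Big\}
$$
is compact in $\mathbb{H}$. It is closed under local uniform convergence. By Montel's theorem it is relatively compact, since its elements are locally uniformly bounded.

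\textbf{Reduction.} It therefore suffices to show the following: for each fixed radius $\rho<1$ and each $\delta>0$, there is an $M$ with
$$
\sup_n \mathbb{P}\Big(\sup_{|z|\le\rho}|f_n(z)|>M\Big)\le\delta .
$$
Granting this, fix $\eta>0$ and choose, for each $j\ge 2$, a constant $M_j$ that achieves $\delta=\eta 2^{-j}$ at radius $1-1/j$. A union bound then gives $\mathbb{P}(f_n\notin\mathcal K)\le\eta$ for all $n$, which is tightness.

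\textbf{Key step: sup over a disk from pointwise second moments.} Fix $\rho<\rho'<1$ and set $K=\{|z|=\rho'\}$. For $|z|\le\rho$, the Cauchy integral formula on the circle of radius $\rho'$ gives
$$
|f_n(z)|\le \frac{\rho'}{\rho'-\rho}\cdot\frac{1}{2\pi}\int_0^{2\pi}|f_n(\rho' e^{i\theta})|\,d\theta .
$$
Next bound the expectation of the right-hand side. Apply Fubini (measurability is clear from continuity), then Cauchy--Schwarz to each integrand:
$$
\mathbb{E}\sup_{|z|\le\rho}|f_n(z)|\le \frac{\rho'}{\rho'-\rho}\sup_{z\in K}\big(\mathbb{E}|f_n(z)|^2\big)^{1/2}\le \frac{\rho'}{\rho'-\rho}\,C_K^{1/2}.
$$
This bound is uniform in $n$. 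Markov's inequality then yields the required $M$ for each $\delta$.

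\textbf{Main obstacle.} The only point needing care is passing from pointwise moment bounds to a bound on the supremum. This is handled by the Cauchy formula above, which turns the supremum into an average over a slightly larger circle. The remaining steps are routine.
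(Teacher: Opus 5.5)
Your proof is correct and follows essentially the standard argument behind this criterion; note that the paper gives no proof of its own and only cites [hachem2025spectral, Proposition~3.1]. That argument, like yours, reduces tightness in $\mathbb{H}$ to uniform tightness of $\sup_{|z|\le\rho}|f_n(z)|$ via Montel's theorem, then bounds this supremum in $L^1$ using the Cauchy integral formula on a slightly larger circle, Fubini, Cauchy--Schwarz and Markov. The only differences are cosmetic: you prove the Montel-based reduction directly by exhibiting the compact set $\mathcal{K}$ instead of quoting Shirai's criterion, and you average over a circle rather than a disk (as in the sub-mean-value version).
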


\begin{prop}[Asymptotic equivalence criteria in $\mathbb{H}$]
\label{cvgH} 
    Let $(f_n)$ and $(g_n)$ be two tight sequences of $\mathbb{H}$-valued random variables. Consider their power series representations in ${\mathcal D}(0,1)$: $f_n(z) =\sum_{k=0}^\infty a_k^{(n)}z^k$ and $g_n(z)=\sum_{k\ge 0} b_k^{(n)} z^k$. If one of the following conditions holds:
    \begin{enumerate}
        \item For every fixed integer $m\ge 1$, $(a_0^{(n)},\cdots, a_m^{(n)})\ \sim_n \ (b_0^{(n)},\cdots, b_m^{(n)})$, 
        \item For every fixed integer $m\ge 1$ and $m$-uple $(z_1,\cdots z_m)\in {\mathcal D}^m(0,1)$, 
        $$
        \left( f_n(z_1),\cdots, f_n(z_m)\right) \ \sim_n \  \left( g_n(z_1),\cdots, g_n(z_m)\right)\ ,
        $$
    \end{enumerate}
    then $f_n \ \sim g_n$ as $n\to\infty$.
\end{prop}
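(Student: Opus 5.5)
We will prove Proposition~\ref{cvgH} by the standard characterization of asymptotic equivalence through subsequences. By tightness of $(f_n)$ and $(g_n)$ and Prokhorov's theorem (recall that $\HH$ is Polish), the laws $(\mu_n)$ of $f_n$ and $(\nu_n)$ of $g_n$ are relatively compact. Hence it suffices to show the following: for every subsequence along which both $\mu_{n'}\Rightarrow\mu$ and $\nu_{n'}\Rightarrow\nu$, we have $\mu=\nu$. Indeed, suppose $\int h\,d\mu_n-\int h\,d\nu_n\not\to0$ for some bounded continuous $h$. Then we can extract a subsequence along which this difference stays away from $0$, and from it a further subsequence along which both laws converge. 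If the limits coincide, we get a contradiction.

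So we fix such a subsequence and limits $f\sim\mu$, $g\sim\nu$, and show that $\mu=\nu$ under either condition. The key observation is that the relevant finite-dimensional maps are continuous on $\HH$. For condition (2), the evaluation map $\pi_{z_1,\dots,z_m}:h\mapsto(h(z_1),\dots,h(z_m))$ is continuous from $\HH$ to $\C^m$. For condition (1), the coefficient map $h\mapsto(a_0(h),\dots,a_m(h))$ is continuous as well. This follows from Cauchy's formula $a_k(h)=\frac1{2\pi i}\oint_{|z|=1/2}h(z)z^{-k-1}dz$, which gives $|a_k(h)-a_k(h')|\le 2^k\sup_{|z|\le1/2}|h-h'|$.

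By the continuous mapping theorem, along the subsequence the images of $\mu_{n'}$ and $\nu_{n'}$ under these maps converge to the images of $\mu$ and $\nu$. The hypothesis says that the difference of integrals of bounded continuous functions of these images tends to $0$. It follows that the finite-dimensional images of $\mu$ and $\nu$ coincide: for every bounded continuous $\phi$ on $\C^{m}$, $\int\phi\circ\pi\,d\mu=\lim\int\phi\circ\pi\,d\mu_{n'}=\lim\int\phi\circ\pi\,d\nu_{n'}=\int\phi\circ\pi\,d\nu$.

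The remaining step, which is the only genuine point, is to show that these finite-dimensional distributions determine a Borel probability measure on $\HH$. We argue via a $\pi$-system. The Borel $\sigma$-field of $\HH$ is generated by the evaluations at a countable dense set of points (or equivalently by the Taylor coefficients). Indeed, $\HH$ is separable metrizable with metric $d(h,h')=\sum_j2^{-j}\min(1,\sup_{|z|\le1-1/j}|h-h'|)$. By continuity of holomorphic functions, each supremum equals a supremum over countably many rational points, so each $d(\cdot,h_0)$ is measurable with respect to the evaluation $\sigma$-field. Open balls are therefore measurable, and by separability so are all open sets. In the coefficient case, $h(z)=\sum_k a_k(h)z^k$ shows that evaluations are measurable functions of the coefficients, which reduces this case to the previous one. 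Cylinder sets form a $\pi$-system generating the Borel $\sigma$-field, so Dynkin's $\pi$–$\lambda$ theorem gives $\mu=\nu$, which concludes the proof.
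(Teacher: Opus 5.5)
Your proof is correct. The paper states this proposition without proof, as a standard tool from the framework of \cite{bordenave2022convergence} and \cite{hachem2025spectral}. Your argument is the standard one: Prokhorov plus a subsequence argument, continuity of the evaluation and Taylor-coefficient functionals, and the fact that these finite-dimensional functionals generate the Borel $\sigma$-field of $\HH$, so that Dynkin's $\pi$--$\lambda$ theorem identifies the two limit points.

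One phrase could be tightened. In case (1), ``reduces this case to the previous one'' should not be read as saying that the laws of finitely many evaluations are determined by finitely many coefficients. What you actually use is that the coefficient cylinders $\{h:(a_0(h),\dots,a_m(h))\in A\}$ form a $\pi$-system generating $\sigma(a_k,\,k\ge 0)\supseteq\sigma(\text{evaluations})=\mathcal B(\HH)$, and you apply Dynkin directly to them. Your final sentence already does this implicitly.
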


Most of the time, we shall drop the dependence in $n$ for notational
convenience.

\begin{prop}[Tightness]
\label{tightness-prop}
Let Assumptions \ref{ass:K} and \ref{ass:E} hold. Let $q_n$ be given by \eqref{def:q}, then the sequence 
$(q_n)_{n \geq 1}$ is tight in $\mathbb{H}$.  
\end{prop}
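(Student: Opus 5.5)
By Proposition~\ref{prop:tightness}, it suffices to show that for every compact $K\subset\mathcal D(0,1)$ we have $\sup_n\sup_{z\in K}\mathbb E|q_n(z)|^2<\infty$. The plan is to expand $q_n(z)=\det(I-zX-zE)$ as a power series in $z$, compute its second moment coefficientwise using independence and centering of the entries of $X$, and bound everything by a convergent series for $|z|\le\rho<1$.

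\emph{Step 1: expansion.} The coefficients of $\det(I-zM)$ are $(-1)^k$ times the sums of principal $k\times k$ minors of $M$. With $M=X+E$, multilinearity of the determinant in columns gives
\[
q_n(z)=\sum_{k=0}^n (-z)^k\sum_{|\cI|=k}\det\big((X+E)_\cI\big).
\]
Each $\det((X+E)_\cI)$ expands, through column-multilinearity and the Cauchy--Binet formula for the rank-$\le r$ part $E=UV^\star$ (with $U=[u^1\cdots u^r]$, $V=[v^1\cdots v^r]$), as
\[
\det((X+E)_\cI)=\sum_{\cJ\subset\cI,\ |\cJ|\le r}\pm\det\big(X_{\cI\setminus\cJ}\big)\cdot(\text{a bilinear expression in }U_{\cJ,\cdot},V_{\cJ,\cdot}\text{ and minors of }X).
\]
A cleaner route to the same bound is the Schur complement. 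Writing $I-zY=(I-zX)(I-z(I-zX)^{-1}UV^\star)$ only helps if $I-zX$ is invertible, which is inconvenient, so I will use the direct expansion instead. Concretely,
\[
\det(I-zX-zUV^\star)=\sum_{\ell=0}^{r}(-z)^\ell\sum_{|S|=\ell}\sum_{\cJ,\,|\cJ|=\ell}\det(V^\star_{S,\cJ}) \det\big(\text{matrix }I-zX\text{ with columns }\cJ\text{ replaced by }U_{\cdot,S}\big),
\]
up to signs. This follows from the generalized matrix determinant lemma, $\det(A+UV^\star)=\det(A)\det(I+V^\star A^{-1}U)$, which is a polynomial identity and therefore holds for all $A$.

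\emph{Step 2: second moments.} Each term has the form $\det(I-zX)$ with some columns replaced by deterministic vectors. Expanding it over permutations and computing $\mathbb E|\cdot|^2$, independence and $\mathbb E X_{ij}=0$ force the pairing of the same entries in the two copies. Since $\mathbb E|X_{ij}|^2=1/n$, the surviving terms are sums over permutations whose random part is a product of $\mathbb E|X_{i\sigma(i)}|^2$. A permutation with $k$ nontrivial random entries contributes $|z|^{2k}n^{-k}$, with at most $n^k$ index choices, plus a deterministic factor from the replaced columns bounded via $\sum_i|u_i|^2\le C$. Only second moments appear, so no sparsity or higher-moment assumption is needed. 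This yields a bound of the form
\[
\mathbb E|q_n(z)|^2\le C_r\,\big(1+\|U\|^2\|V\|^2\big)^{r}\sum_k (k+1)^{2r}|z|^{2k},
\]
which is finite and uniform in $n$ for $|z|\le\rho<1$ by Assumption~\ref{ass:E}.

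\emph{Main obstacle.} The delicate point is the bookkeeping in Step 2. Cross terms between different $(S,\cJ)$ and different permutations must either vanish in expectation or be controlled, and the polynomial factor in $k$ coming from the choices of the $\le r$ replaced positions must be tracked. I would handle this by first applying Cauchy--Schwarz over the finitely many (at most $\binom{r}{\ell}$-indexed, together with the choices of $\cJ$) terms, reducing to one fixed replaced-column pattern. For a fixed pattern, the orthogonality of distinct monomials in $X$ gives an exact sum of squares, which is then bounded by $\prod\|u^t\|^2$ times $\sum_k\binom{n}{k}$-type counts multiplied by $n^{-k}$.
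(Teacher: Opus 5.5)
\textbf{The proof has a genuine gap at the step you flag as bookkeeping.} In your Step~1 expansion the index $\cJ$ runs over all $\binom{n}{\ell}$ subsets of $[n]$ of size $\ell$. So the family of replaced-column patterns $(S,\cJ)$ is not finite uniformly in $n$, and Cauchy--Schwarz over it costs a factor of order $n^{\ell}$ (up to $n^{r}$).

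The cross terms between different $\cJ$ do not vanish either. Write $D_{S,\cJ}$ for $\det(I-zX)$ with the columns in $\cJ$ replaced by $U_{\cdot,S}$. Then $\E D_{S,\cJ}=\pm\det(U_{\cJ,S})\neq 0$ in general. These means combine into the bounded quantity $\E q_n(z)=\det(I-zE)$ only through the Cauchy--Binet summation over $\cJ$, not termwise.

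Your per-pattern estimate, $\E|D_{S,\cJ}|^2\le C\prod_t\|u^t\|^2$, is true but too weak. Summed over $\cJ$ it gives $O(n^{\ell})$, so the argument as written does not establish a bound uniform in $n$.

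\textbf{Repairing it within your framework.} You need two further ingredients. The first is a weighted Cauchy--Schwarz using the Cauchy--Binet identity $\sum_{\cJ}|\det((V^\star)_{S,\cJ})|^2=\det(V_{\cdot,S}^\star V_{\cdot,S})\le\prod_{s\in S}\|v^s\|^2$. The second is the sharper estimate $\sum_{\cJ}\E|D_{S,\cJ}|^2=O(1)$, obtained as follows:
\begin{itemize}
\item Laplace-expand along the replaced columns: $D_{S,\cJ}=\sum_{|\mathcal R|=\ell}\pm\det(U_{\mathcal R,S})\det\big((I-zX)_{\mathcal R^c,\cJ^c}\big)$.
\item Terms with different row sets $\mathcal R$ are orthogonal.
\item $\E|\det((I-zX)_{\mathcal R^c,\cJ^c})|^2=O(n^{-|\cJ\setminus\mathcal R|})$, because rows in $\cJ\setminus\mathcal R$ cannot be matched through diagonal entries and must each use an entry of $X$.
\item Finally $\sum_{\cJ}n^{-|\cJ\setminus\mathcal R|}=O(1)$ and $\sum_{\mathcal R}|\det U_{\mathcal R,S}|^2=\det(U_{\cdot,S}^\star U_{\cdot,S})$.
\end{itemize}

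\textbf{How the paper avoids this.} Since $\E|q_n(z)|^2$ involves only $\E X_{ij}$ and $\E|X_{ij}|^2$, the paper takes $X_{ij}\sim\mathcal N_\C(0,1/n)$. It then uses bi-unitary invariance and an SVD $I-zE=P\Sigma Q^\star$ to reduce to $\E|\det(zX+\Sigma)|^2$. Here $\Sigma$ is diagonal, all but at most $2r$ of its entries equal $1$, and $\|\Sigma\|\le C_\Sigma$. The column-subset expansion \eqref{detA+B} is then exactly orthogonal and gives $\E|q_n(z)|^2\le C_\Sigma^{4r}/(1-|z|^2)$.

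A Gaussian-free variant of the same idea also works. Laplace-expanding $\det(M-zX)$, with $M=I-zE$, along the $X$-columns gives exactly $\E|q_n(z)|^2=\sum_k|z|^{2k}\frac{k!}{n^k}\,e_{n-k}$ of the squared singular values of $M$. This is bounded uniformly in $n$ because at most $2r$ of those singular values differ from $1$.
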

\begin{proof} 
We first recall a well-known general result. Let $A$ and $B$ be two $n\times n$ 
matrices with columns $A_{i}$ and $B_i$ respectively for $i\in[n]$. Then, 
using the multilinearity of the determinant, we can write 
\begin{align*} 
\det(A+B) &= \det\begin{bmatrix} A_1+B_1 & A_2+B_2 & \cdots & A_n + B_n 
    \end{bmatrix}  \\ 
 &= \det\begin{bmatrix} A_1 & A_2+B_2 & \cdots & A_n + B_n \end{bmatrix}   
  + \det\begin{bmatrix} B_1 & A_2+B_2 & \cdots & A_n + B_n \end{bmatrix}  \\
 &= \det\begin{bmatrix} A_1 & A_2 & \cdots & A_n + B_n \end{bmatrix}   
 + \det\begin{bmatrix} A_1 & B_2 & \cdots & A_n + B_n \end{bmatrix}  + \cdots
\end{align*} 
which will ultimately provide a ``binomial-like'' expression of 
$\det(A+B)$ that will have the following form. Given $k \in \{0,\ldots, n\}$,
let $\cI \in [n]$ with $|\cI| = k$ and all the elements of $\cI$ are different, 
and denote as $(A,B)_{\cI}$ the $n\times n$ matrix which $i^{\text{th}}$ 
column is $A_i$ if $i\in \cI$ and $B_i$ if $i\in [n] \setminus \cI$. 
Then, 
\begin{equation}
\label{detA+B}
\det(A+B) = \sum_{k=0}^n \sum_{\cI\in[n] : |\cI| = k} \det(A,B)_\cI .
\end{equation}
Let us write $M = I - z E = [M_{ij}]_{i,j=1}^n$, so that 
$q(z) = \det(- z X + M)$. Writing 
\[
\E |q(z)|^2 = \sum_{\sigma,\tilde\sigma \in \mathfrak S_n} 
 \E ( - z X_{1,\sigma(1)} + M_{1,\sigma(1)}) \ldots 
  ( - z X_{n,\sigma(n)} + M_{n,\sigma(n)} ) 
 ( - \bar z \bar X_{1,\tilde\sigma(1)} + \bar M_{1,\tilde\sigma(1)}) \ldots 
  ( - \bar z \bar X_{n,\tilde\sigma(n)} + \bar M_{n,\tilde\sigma(n)}) , 
\]
we see that the element $X_{ij}$ acts on $\E |q(z)|^2$ through 
$\E X_{ij}$ and $\E | X_{ij}|^2$ only. Therefore, $\E |q(z)|^2$ is invariant
if we assume that these elements are i.i.d.~with 
$X_{11} \sim \mathcal N_\C(0,1/n)$, which we do from now on in this proof.

Denoting as $M = U \Sigma V^\star$ a singular value decomposition of $M$, we have
\[
|q(z)|^2 = \det(-zX + M)(-\bar zX^\star+M^\star)  =  
 \det(-zU^\star XV + \Sigma)(-\bar zV^\star X^\star U + \Sigma)  
 \stackrel{\mathcal L}{=} |\det(zX + \Sigma)|^2 . 
\]
We also have that the matrix $MM^\star = I - z E - \bar z E^\star + |z|^2 EE^\star$ is
equal to the identity plus a deformation of rank $2r$ at most.  Therefore, the
diagonal $n\times n$ matrix $\Sigma$ of the singular values of $M$ contains
ones on its diagonal except for $2r$ singular values at most. Moreover, using
Assumption~\ref{ass:E} and recalling that $z \in \mathcal D(0,1)$, we obtain
that there exists $C_\Sigma \geq 1$ independent of $n$ and $z$ such that 
$\|\Sigma \|\leq C_\Sigma$.

We now compute $\E |q(z)|^2 = \E |\det(zX + \Sigma)|^2$ where we develop 
$\det(zX + \Sigma)$ using the formula \eqref{detA+B}. Here, we can notice that
$\E \det(zX,\Sigma)_{\cI} \overline{\det(zX,\Sigma)_{\tilde\cI}} = 0$ if 
$\cI \neq \tilde\cI$. Indeed, the case being, one of the matrices 
$(zX,\Sigma)_{\cI}$ or $(zX,\Sigma)_{\tilde\cI}$ contains a column of $zX$ 
that is not present in the other. Making a Laplace expansion of the 
corresponding determinant along this column, we obtain that the cross 
expectation is zero. We therefore get that 
\[
\E |q(z)|^2 = \sum_{k=0}^n 
 \sum_{\cI\in[n] : |\cI| = k} \E \left| \det(zX,\Sigma)_\cI \right|^2. 
\]
Let us work on one of these determinants. For a given $k$, let us assume for
simplicity that $\cI = [k]$. Otherwise, we can permute the rows and columns of 
$(zX,\Sigma)_\cI$ properly; this does not affect 
$\left| \det(zX,\Sigma)_\cI \right|^2$. 
Writing $[k]^{\text{c}} = [n]\setminus [k]$, we have 
\[
\det(zX,\Sigma)_\cI = \det(zX,\Sigma)_{[k]} = 
\det\begin{bmatrix} 
 z X_{[k],[k]} & 0 \\ z X_{[k]^{\text{c}}, [k]} & 
   \Sigma_{[k]^{\text{c}},[k]^{\text{c}}} \end{bmatrix} 
 = z^k \det X_{[k],[k]} \ \det \Sigma_{[k]^{\text{c}},[k]^{\text{c}}} , 
\]
and $\E | \det(zX,\Sigma)_{[k]} |^2 = 
  |z|^{2k} \left| \det \Sigma_{[k]^{\text{c}},[k]^{\text{c}}} \right|^2 
  \E | \det X_{[k],[k]} |^2$. 
By the properties of $\Sigma$ stated above, we have 
$\left| \det \Sigma_{[k]^{\text{c}},[k]^{\text{c}}} \right|^2 \leq 
 C_{\Sigma}^{4r}$. Moreover, if $k = 0$, then  
$\E | \det X_{[k],[k]} |^2 = 1$, otherwise, 
\[
\E | \det X_{[k],[k]} |^2 = 
\sum_{\sigma\in\mathfrak S_k} \E \left| X_{1,\sigma(1)} \ldots 
   X_{k,\sigma(k)} \right|^2 = \frac{k ! }{n^k} .
\]
Therefore, $\E | \det(zX,\Sigma)_{[k]} |^2 \leq |z|^{2k} C_{\Sigma}^{4r} 
 k ! / n^k$, and we end up with 
\[
\E |q(z)|^2 = \sum_{k=0}^n 
 \sum_{\cI\in[n] : |\cI| = k} \E \left| \det(zX,\Sigma)_\cI \right|^2 
 \leq C_{\Sigma}^{4r} \sum_{k=0}^n \binom{n}{k} \frac{k ! }{n^k} |z|^{2k}  
 \leq C_{\Sigma}^{4r} \sum_{k=0}^\infty  |z|^{2k}  
 = \frac{C_{\Sigma}^{4r}}{1 - | z |^2}. 
\]
The tightness of $(q_n)$ follows by applying Proposition \ref{prop:tightness}.
\end{proof}

Moreover it is sufficient to examine the characteristic polynomial of $Y^n$, when the entries of $A^n$ are bounded almost surely. Specifically

\begin{prop}[Truncation]
\label{trunc} 
    Let Assumptions \ref{ass:K} and \ref{ass:E} hold. Let $D > 0$ and define
\[
A^D_{i,j}=A_{i,j}1_{|A_{i,j}|\leq D} -\E A_{i,j}1_{|A_{i,j}|\leq D}\ ,\qquad 
 X^{n,D}_{i,j}=\frac{1}{\sqrt{K_n}}B^n_{i,j}\,A^D_{i,j} \qquad
\text{and}\qquad 
  Y^{n,D}_{ij}=X^{n,D}_{ij}+E^n_{ij}\ , \qquad (i,j\in [n])\, .
\]
Let 
$$
X^{n,D} = \left[ X^{n,D}_{i,j} \right]_{i,j\in[n]}\ ,\qquad Y^{n,D} = X^{n,D} + E^{n}\qquad \textrm{and}\qquad  
q_n^D(z) = \det\left( I_n - z Y^{n,D} \right)\ .
$$ 
Then, 
\[
\forall z\in\mathcal D(0,1), \quad
  \sup_n \E \left| q_n(z) - q_n^D(z) \right|^2 \leq  \varepsilon(D)
 \qquad \textrm{where}\qquad \varepsilon(D)\xrightarrow[D\to\infty]{}0 .
\]
\end{prop}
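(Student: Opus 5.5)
Write $\Delta = X - X^D$, so that $\Delta_{ij} = K_n^{-1/2} B_{ij}(A_{ij} - A^D_{ij})$. The entries of $\Delta$ are i.i.d. and centered, and
\[
\E|\Delta_{ij}|^2 = \frac{1}{K_n}\cdot\frac{K_n}{n}\,\E|A_{11}-A_{11}^D|^2 = \frac{\delta(D)}{n},
\qquad \delta(D)\xrightarrow[D\to\infty]{}0,
\]
by dominated convergence, since $\E|\chi|^2<\infty$. The plan is to reuse the second-moment computation from the proof of Proposition~\ref{tightness-prop}. By multilinearity in the rows, both $q(z)=\det(M - zX)$ and $q^D(z)=\det(M-zX^D)$, with $M = I - zE$, are sums over permutations of products of entries. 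Moreover $\E|q(z)-q^D(z)|^2$ only involves mixed moments in which each index pair $(i,j)$ appears at most once in each of the two permutation products. It therefore depends on the joint law of $(X_{ij}, X^D_{ij})$ only through first and second mixed moments: $\E X_{ij}=\E X^D_{ij}=0$, $\E|X_{ij}|^2 = 1/n$, $\E|X^D_{ij}|^2 = (1-\delta'(D))/n$, and the cross moment $\E X_{ij}\overline{X^D_{ij}}$. So we may replace $(X_{ij},X^D_{ij})$ by a jointly complex Gaussian pair with the same covariance, which gives a Gaussian model $X^D = aX + W$ where $W$ is independent of $X$. One should also check pseudo-covariances $\E X_{ij}^2$, but these enter only through terms of the same structure and are handled identically (or the same invariance argument applies with real/imaginary parts).

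Next, expand $q - q^D$ more directly. Write $q^D(z) = \det(M - zX + z\Delta)$ and apply the expansion \eqref{detA+B} with $A = M - zX$ and $B = z\Delta$. The terms with no column from $\Delta$ give exactly $q(z)$, so
\[
q(z)-q^D(z) = -\sum_{\mathcal I\neq[n]} \det(M-zX,\, z\Delta)_{\mathcal I}.
\]
Computing $\E|\cdot|^2$ as in Proposition~\ref{tightness-prop} is cleanest if we treat $X$ and $\Delta$ as independent. To arrange this, I would instead use the Gaussian replacement in the form $X = X^D + \Delta$ with $X^D$ and $\Delta$ independent centered Gaussian. This is legitimate because only the second moments of the entries matter and $\E X^D_{ij}\overline{\Delta_{ij}}$ can be dealt with by the above coupling; since $A^D$ and $A - A^D$ are not exactly uncorrelated, I would accept a small correction here.

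Then expand $\det(M - zX^D - z\Delta)$ around $M$ into a sum over disjoint column subsets $\mathcal I_1$ (columns taken from $X^D$) and $\mathcal I_2$ (columns taken from $\Delta$). The difference $q - q^D$ consists of the terms with $\mathcal I_2 \neq \emptyset$. Orthogonality of distinct $(\mathcal I_1,\mathcal I_2)$ follows by Laplace expansion along a Gaussian column, exactly as before. After an SVD reduction of $M$, each term is bounded by $C_\Sigma^{4r}|z|^{2k}$ times the expected squared modulus of the corresponding $k\times k$ minor, with $k=|\mathcal I_1|+|\mathcal I_2|$. Summing over subsets yields
\[
\E|q-q^D|^2 \le C_\Sigma^{4r}\sum_{k\ge1}|z|^{2k}\Bigl(1-(1-\delta(D))^k\Bigr)
\le C_\Sigma^{4r}\sum_{k\ge1} k\,\delta(D)\,|z|^{2k}
= \frac{C_\Sigma^{4r}\,\delta(D)\,|z|^2}{(1-|z|^2)^2}.
\]
This tends to $0$ uniformly in $n$, which is the claim.

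The main obstacle is justifying the reduction to a Gaussian model with independent parts, given that $A^D$ and $A-A^D$ are correlated: their covariance is $-|\E A1_{|A|\le D}|^2 = O(\delta)$. I would first try to handle this by an exact multinomial computation that keeps general covariances. In the pairing sum, each entry contributes a factor from a $2\times 2$ covariance matrix, and the difference of the full expressions telescopes into $\sum_k \binom{n}{k}\frac{k!}{n^k}|z|^{2k}\,[\,1 - 2\Re(c^k) + s^k\,]$. Here $c = n\,\E X_{11}\overline{X^D_{11}}$ and $s = n\,\E|X^D_{11}|^2$ are both $1-O(\delta(D))$, which gives the same bound directly.
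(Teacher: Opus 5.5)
Your final argument is essentially the paper's proof. You keep the full $2\times 2$ covariance of $(X_{ij},X^D_{ij})$ in a circular Gaussian replacement, where your representation $X^D=aX+W$ gives the joint bi-unitary invariance needed to reduce $M$ to $\Sigma$. You expand over column subsets $\cI$ with orthogonality and compute exactly $\E|\det X_{\cI}-\det X^D_{\cI}|^2=\frac{k!}{n^k}\bigl(1-2\Re c^k+s^k\bigr)=\frac{k!}{n^k}\,\mathcal O(k\,\delta(D))$; the paper differs only in bounding this minor difference by a telescoping sum and Minkowski's inequality.

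The intermediate model with $X^D$ and $\Delta$ independent is neither needed nor exact: their cross-covariance $n\E X^D_{ij}\overline{\Delta_{ij}}=|\E A_{11}1_{|A_{11}|\le D}|^2$ is nonzero, and positive rather than negative. Pseudo-covariances $\E X_{ij}^2$ never enter, since each entry appears at most once unconjugated, so you can drop both hedges.
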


\begin{proof}
We omit the supersript $n$ in the sequel to lighten the notations. Without
a risk of confusion, we replace, \emph{e.g.}, $Y^{n,D}$ with $Y^D$. We closely
follow the principles and notations introduced in the previous proof. Let 
$M = I - zE$ as above. Writing 
$$
 \E \left| q_n(z) - q_n^D(z) \right|^2 
 =  \E \left| 
\sum_{\sigma \in \mathfrak S_n} \left( \prod_{i=1}^n 
 ( - z X_{i,\sigma(i)} + M_{i,\sigma(i)}) 
-  \prod_{i=1}^n( - z X^D_{i,\sigma(i)} + M_{i,\sigma(i)}) \right)
 \right|^2 
$$
and developing, we notice that $\E \left| q_n(z) - q_n^D(z) \right|^2$ depends on each element $X_{ij}$ via the vector $\E \begin{bmatrix} X_{ij} \\ X_{ij}^D\end{bmatrix}(=0)$ and
the $2\times 2$ matrix 
$$
R_D = n \E \begin{bmatrix} X_{ij} \\ X_{ij}^D\end{bmatrix} 
\begin{bmatrix} \overline{X_{ij}} & \overline{X_{ij}^D}\end{bmatrix}
$$
which does not
depend on $n$. Therefore, we can assume without loss of generality that the vector $\sqrt{n} \begin{bmatrix} X_{ij} \\ X_{ij}^D\end{bmatrix}$ is a cicularly symmetric Gaussian vector (see the definition in \cite{telatar1999capacity} for instance) with covariance matrix $R_D$, and in particular:
$$
\E \begin{bmatrix} X_{ij} \\ X_{ij}^D\end{bmatrix}=0\,,\quad n \E \begin{bmatrix} X_{ij} \\ X_{ij}^D\end{bmatrix} 
\begin{bmatrix} \overline{X_{ij}} & \overline{X_{ij}^D}\end{bmatrix} = R_D\quad \textrm{and}\quad n \E \begin{bmatrix} X_{ij} \\ X_{ij}^D\end{bmatrix} 
\begin{bmatrix} X_{ij} & X_{ij}^D\end{bmatrix} =0\, .
$$
Assuming this, we first observe that $\vect[X \ X_D]$ is a $\C^{2n^2}$--valued
circularly symmetric Gaussian vector, and so is vector 
$A \vect[X \ X_D]$ for any deterministic $p\times 2n^2$ matrix $A$.
Consider now $n\times n$ deterministic matrices $U,V$. Applying 
\cite[Lemma 4.3.1]{horn1994topics} we have
$$
\vect[UXV \ UX^DV] = (V^T\otimes U) \vect[ X \ X^D ],  
$$
hence $\vect [ UXV \ UX^DV]$ is circularly symmetric Gaussian, in particular
$$
\mathbb{E} [UXV]_{ij} [UYV]_{st} = 0 \quad 
 \textrm{for any}\quad i,j,s,t\in [n]\quad \textrm{and} 
  \quad Y\in \{ X, X^D\}\, .
$$
We now wish to understand the covariance structure of the components of
$\vect[ UXV \ \ UX^DV]$ in the case where $U$ and $V$ are unitary. 
\begin{eqnarray*}
    \mathbb{E} [UXV]_{ij} \overline{[UXV]_{st}} &=& \sum_{k,\ell} \sum_{p,q} U_{ik} \overline{U_{sp}} 
    V_{\ell j} \overline{V_{qt}} \mathbb{E} X_{k\ell}\overline{X_{pq}}\ ,\\
    &=& \sum_k U_{ik} \overline{U_{sk}} \sum_{\ell} V_{\ell j} \overline{V_{\ell t}} [R_D]_{11}\ ,\\
    &=&  [UU^\star]_{is} [V^\star V]_{tj} [R_D]_{11}\quad=\quad \delta_{is} \delta_{jt}\ [R_D]_{11}  \qquad \textrm{where}\quad \delta_{ab}=\begin{cases} 1 &\textrm{if}\ a=b\\
    0&\textrm{else} \end{cases}\ .
\end{eqnarray*}
Similarly we can prove that 
\begin{eqnarray*}
    \mathbb{E} [UXV]_{ij} \overline{[UX^DV]_{st}} =\delta_{is} \delta_{jt}\ [R_D]_{12}\qquad \textrm{and}\qquad 
    \mathbb{E} [UX^DV]_{ij} \overline{[UX^DV]_{st}}= \delta_{is} \delta_{jt}\ [R_D]_{22}\ .    
\end{eqnarray*}
Collecting all these properties, we have proved that for any $n\times n$ deterministic, unitary matrices $U,V$, 
$$
[X \  X^D]\ \stackrel{\mathcal L}{=}\ [UXV\ \ UX^DV] . 
$$
Therefore, using the singular value decomposition $M = U \Sigma V^\star$ and 
Equation~\eqref{detA+B}, we have:
\begin{eqnarray*} 
 \E \left| q_n(z) - q_n^D(z) \right|^2 &=& 
 \E \left| \det(-z U^\star X V + \Sigma) - \det(-z U^\star X^D V + \Sigma) \right|^2 \\
 &=& \E \left| \det(z X + \Sigma) - \det(z X^D + \Sigma) \right|^2 \\
 &=& \E\left| \sum_{k=0}^n \sum_{\cI\subset[n]:|\cI|=k} 
   \det(zX,\Sigma)_\cI - \det(zX^D,\Sigma)_\cI \right|^2 \\
 &=& \sum_{k=0}^n \sum_{\cI\subset[n]:|\cI|=k} 
   \E\left| \det(zX,\Sigma)_\cI - \det(zX^D,\Sigma)_\cI \right|^2 \, , 
\end{eqnarray*}
by relying on the fact (established in the previous proof) that 
$$
\E \det(zX,\Sigma)_\cI \overline{\det(zX^D,\Sigma)}_{\tilde\cI} = 0\quad \textrm{if}\quad \cI\neq \tilde\cI\, .
$$ 
Let $\cI = [k]$ as above for some $k\ge 1$, then 
\begin{eqnarray*}
\E\left| \det(zX,\Sigma)_\cI - \det(zX^D,\Sigma)_\cI \right|^2 
& =& |z|^{2k} \E\left| \det X_{[k],[k]} - \det X^D_{[k],[k]} \right|^2 
 \left| \det \Sigma_{[k]^{\text{c}},[k]^{\text{c}}} \right|^2 \ ,\\
 &\leq& 
|z|^{2k} C(\Sigma,r) 
  \E\left| \det X_{[k],[k]} - \det X^D_{[k],[k]} \right|^2\ , 
\end{eqnarray*}
where $C(\Sigma, r)$ is a constant independent of $n$ by 
Assumption \ref{ass:E}. 

Recall that $\mathbb{E} |A_{11}|^2= 1$, notice that 
$
\mathbb{E} |A_{11}^D|^2\le 1$ and
$$
\varepsilon(D):= \mathbb{E} | A_{11} - A_{11}^D|^2 \xrightarrow[D\to\infty]{}0\, .
$$
We have:
\begin{eqnarray*}
\E \left| \prod_{i\in [k]} X_{1i} - \prod_{i \in [k]} X^D_{1i} \right|^2 &=& 
\E\left| (X_{11} - X^D_{11}) X_{12}\cdots X_{1k} + 
    \cdots + 
  X^D_{11} \cdots X^D_{1,k-1} (X_{1k} - X^D_{1k}) \right|^2  \quad \leq\quad \frac{k}{n^k} \varepsilon(D) 
\end{eqnarray*}
by Minkowski's inequality. We therefore obtain that 
\[
\E\left| \det X_{[k],[k]} - \det X^D_{[k],[k]} \right|^2 \quad =\quad  
 \E\left| \sum_{\sigma\mathfrak S_k} X_{1\sigma(1)} \cdots X_{k\sigma(k)} 
  - X^D_{1\sigma(1)} \cdots X^D_{k\sigma(k)} \right|^2 
 \quad \leq\quad   k! \,\frac{k} {n^k}\, \varepsilon(D)\,.
\]
Now,
$$
\sum_{\cI\subset[n]:|\cI|=k} 
   \E\left| \det(zX,\Sigma)_\cI - \det(zX^D,\Sigma)_\cI \right|^2\quad \le \quad \binom{n}{k}\   k! \,\frac{k} {n^k}\, \varepsilon(D)\, ,
$$
and finally 
\[
 \E \left| q_n(z) - q_n^D(z) \right|^2 \quad \leq\quad  
C(\Sigma,r) \ \varepsilon(D) \ \sum_{k=0}^\infty k |z|^{2k} \quad \leq\quad 
 C \varepsilon_D\, . 
\]
The proposition is proven. 
\end{proof}

\subsection{Moments of $Y^n$ and $X^n$}

We study the asymptotic behavior of the vector
\[
\bigl(1,\tr(Y^n),\dots,\tr((Y^n)^k)\bigr),
\qquad k\in\N.
\]
Throughout, we write $\tr((Y^n)^k)$ (and similarly for $X^n,E^n$); this is the quantity expanded below.

\textbf{Circles}: We consider directed circles (called \emph{circles}) consisting of exactly $k$ edges. In our setting, a \emph{circle}
is an Eulerian cycle of a strongly connected directed multigraph; vertices may repeat and multiple edges (including
loops and parallel edges) are allowed. We identify underlying strongly connected directed multigraphs up to graph
isomorphism, and denote by $\mathcal{C}_k$ the collection of all Eulerian cycles of length $k$ arising from all such
isomorphism classes.

Formally, an element $\mathbf{C}\in\mathcal{C}_k$ can be represented by a cyclic sequence
\[
\mathbf{C}=\{u_1,u_2,\dots,u_k,u_1\},
\]
where the vertices $u_i$ are not necessarily distinct, and each consecutive pair $(u_i,u_{i+1})$ forms a directed edge
(with the convention $u_{k+1}=u_1$). We denote by $V(\mathbf{C})$ the set of vertices appearing in $\mathbf{C}$, and by
\[
E(\mathbf{C})=\bigl\{(u_i,u_{i+1}) : i=1,\dots,k\bigr\}
\]
the multiset of edges. For an edge $e=(u,v)\in E(\mathbf{C})$, its \emph{multiplicity} is
\[
\bigl|\{\tilde e\in E(\mathbf{C}) : \tilde e=e\}\bigr|.
\]
We call $u$ the \emph{source} of $e$ and $v$ its \emph{target}.

For any $\mathbf{C}\in\mathcal{C}_k$ and $B\subset E(\mathbf{C})$, we denote by $\mathbf{C}\setminus B$ the directed
multigraph with edge multiset $E(\mathbf{C})\setminus B$ and vertex set induced by these edges.

\textbf{Labelings.}
Given $B\subset E(\mathbf{C})$ and a labeling $\mathbf{i}\in[n]^{|V(\mathbf{C})|}$, we write $\mathbf{i}\sim\mathbf{C}$
if $\mathbf{i}$ assigns distinct indices from $[n]$ to the vertices of $\mathbf{C}$ according to their first order of
appearance along the circle. We denote by $\mathbf{i}(B)$ the (multi)set of labeled edges corresponding to $B$.

With this notation,
\begin{align*}
\tr\!\bigl((Y^n)^k\bigr)
&=\sum_{(i_1,\dots,i_k)\in[n]^k}\prod_{\ell=1}^k (X^{n}+E^n)_{i_\ell,i_{\ell+1}}  \\
&=\sum_{\mathbf{C}\in\mathcal{C}_k}\ \sum_{B\subset E(\mathbf{C})}\
\sum_{\substack{\mathbf{i}\sim\mathbf{C}\\ \mathbf{i}\in[n]^{|V(\mathbf{C})|}}}
\ \prod_{(i,j)\in\mathbf{i}(B)}E^n_{i,j}\ \prod_{(i,j)\notin\mathbf{i}(B)}X^n_{i,j},
\end{align*}
under the convention $i_{k+1}=i_1$. Therefore,
\begin{align}\label{expected_value}
\tr\!\bigl((Y^n)^k\bigr)-\tr\!\bigl((X^n)^k\bigr)-\tr\!\bigl((E^n)^k\bigr)
=\sum_{\mathbf{C}\in\mathcal{C}_k}\ \sum_{\substack{B\subset E(\mathbf{C})\\ B\neq\emptyset,\ E(\mathbf{C})}}
\sum_{\substack{\mathbf{i}\sim\mathbf{C}\\ \mathbf{i}\in[n]^{|V(\mathbf{C})|}}}
\ \prod_{(i,j)\in\mathbf{i}(B)}E^n_{i,j}\ \prod_{(i,j)\notin\mathbf{i}(B)}X^n_{i,j}.
\end{align}

\subsubsection*{Auxiliary notation}

\begin{notation}\label{not:bd-deg}
\begin{itemize}
\item For any finite multiset $A$, we denote by $|A|_{\mathrm{no}}$ the cardinality of its underlying set (i.e., ignoring
multiplicities).
\item For $\mathbf{C}\in\mathcal{C}_k$ and $B\subset E(\mathbf{C})$, define
\[
E\bigl(\bd(\mathbf{C}\setminus B)\bigr)
=\Bigl\{e\in B:\ \exists\, v\in V(\mathbf{C}\setminus B)\ \text{such that $v$ is incident to $e$}\Bigr\}.
\]
Then $\bd(\mathbf{C}\setminus B)$ denotes the directed multigraph induced by the edge multiset
$E(\bd(\mathbf{C}\setminus B))$.
\item For $\mathbf{C}\in\mathcal{C}_k$ and $B\subset E(\mathbf{C})$, let $\mathbf{C}_B$ be the directed multigraph induced
by the edge multiset $B$. We write $\mathbf{i}\sim\mathbf{C}_B$ to indicate a labeling $\mathbf{i}\in[n]^{|V(\mathbf{C}_B)|}$
assigning distinct values to the vertices of $\mathbf{C}_B$.
\item For any directed multigraph $G$ and $v\in V(G)$, we denote by $\deg_G^+(v)$ (resp.\ $\deg_G^-(v)$) the out-degree
(resp.\ in-degree) of $v$, counted with multiplicity.
\end{itemize}
\end{notation}

\subsubsection*{A combinatorial inequality}

\begin{lem}\label{|B|-tree}
Fix $\mathbf{C}\in\mathcal{C}_k$ and $B\subset E(\mathbf{C})$ such that $B\neq\emptyset$ and $B\neq E(\mathbf{C})$.
Assume:
\begin{enumerate}
\item $\mathbf{C}\setminus B$ is weakly connected;
\item for every $e\in E(\mathbf{C}\setminus B)$, the multiplicity
\(
|\{\tilde e\in E(\mathbf{C}):\tilde e=e\}|\ge 2.
\)
\end{enumerate}
Then
\begin{align*}
&|V(\mathbf{C} \setminus  B)|
- \left|\left\{v \in V(\mathbf{C} \setminus  B)\cap V(\bd(\mathbf{C} \setminus  B)):
\deg^+_{\mathbf{C}_B}(v) + \deg^-_{\mathbf{C}_B}(v) \geq 2\right\}\right| \\
&\quad - \frac{1}{2} \left|\left\{v \in V(\mathbf{C}_B):
\deg^+_{\mathbf{C}_B}(v) + \deg^-_{\mathbf{C}_B}(v) = 1\right\}\right|
- |E(\mathbf{C}\setminus B)|_{\mathrm{no}}
\le -1.
\end{align*}
\end{lem}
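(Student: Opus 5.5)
The plan is to compare both sides of the inequality through the underlying undirected simple graph of $H:=\mathbf C\setminus B$. Write $V_H=|V(H)|$, $N=|E(H)|_{\mathrm{no}}$, and let $W=V(H)\cap V(\bd(H))$ be the set of vertices of $H$ that are incident to an edge of $B$. Let $S\subset W$ be the vertices with $\deg^+_{\mathbf C_B}+\deg^-_{\mathbf C_B}\ge 2$, and $T$ the vertices of $\mathbf C_B$ with total $B$-degree $1$. The claim then reads $V_H-N\le |S|+|T|/2-1$.

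\emph{Step 1: decomposition into arcs.} Since $B\neq\emptyset$ and $B\neq E(\mathbf C)$, the circle $\mathbf C$ splits into $a\ge 1$ maximal arcs of $B$-edges alternating with $a$ maximal arcs of $H$-edges. Every interior vertex of a $B$-arc has $B$-degree at least $2$. Hence a vertex of $B$-degree $1$ must be an endpoint of a $B$-arc, and such endpoints lie in $W$. This gives $T\subset W$. Since every $w\in W$ has $B$-degree at least $1$, each $w\in W$ contributes at least $1/2$ to $|S|+|T|/2$, and contributes $1$ if $w\in S$. If $|W|=1$, every $B$-arc starts and ends at the single vertex $w$, so $\deg_{\mathbf C_B}(w)$ is even and positive; thus $w\in S$. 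In all cases this yields $|S|+|T|/2\ge 1$.

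\emph{Step 2: non-tree case.} By weak connectivity of $H$, its underlying simple graph has a spanning tree, so $N\ge V_H-1$. Moreover $N\ge V_H$ as soon as $H$ contains a loop, a pair of antiparallel edges $(u,v),(v,u)$, or an undirected cycle. In that case Step 1 gives $V_H-N\le 0\le |S|+|T|/2-1$, which is the claim.

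\emph{Step 3: tree case (the main obstacle).} It remains to treat the case where the underlying simple graph of $H$ is a tree with $V_H\ge 2$ and no loops, no antiparallel pairs and no repeated distinct edges. Here $N=V_H-1$, and we need $|S|+|T|/2\ge 2$. I would argue on the $H$-arcs, using the fact that $H$ is a directed forest-like object.
\begin{itemize}
\item Every $H$-arc is a directed walk in a directed tree, hence a directed path.
\item If some $H$-arc were a closed walk, it would have to traverse some edge in both directions, creating an antiparallel pair, which is excluded. So each $H$-arc joins two distinct vertices of $W$.
\item Since some $H$-arc has distinct endpoints, $|W|\ge 2$.
\end{itemize}
It then suffices to rule out the configuration $|W|\in\{2,3\}$ with too many vertices in $T$, since $|W|\ge 4$ already gives $|S|+|T|/2\ge 2$. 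Here the multiplicity hypothesis (2) should enter. Take the directed tree path followed by an $H$-arc from $w$ to $w'$. If each of its edges is used only once inside $H$, then each must also appear (as the same directed edge) in $B$. A leaf $\ell$ of the tree $H$ lies in $W$, because a directed path in a tree cannot pass through a leaf in its interior. Combining these facts, the $B$-copies of the edges at a leaf should push the $B$-degree of the relevant endpoints to at least $2$, forcing them into $S$; alternatively they force further vertices of $H$ into $W$.

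I would carry this out by examining the two leaves of the tree, which exist since $V_H\ge 2$, and the edges incident to them. The goal is to show that either both leaves are in $S$, or there are at least four vertices in $W$, or $|S|\ge 1$ and $|T|\ge 2$. This bookkeeping at the leaves is the delicate part of the argument, and I expect it to require a short but careful case check.
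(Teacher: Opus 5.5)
Your Steps 1 and 2 are correct. The facts $T\subset W$ and $W=S\sqcup T$, together with the $|W|=1$ argument, give $|S|+|T|/2\ge 1$, and this settles every configuration in which $N\ge V_H$.

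\textbf{The gap.} The gap is in Step 3, which is the only case where hypothesis (2) is actually needed. There you do not prove $|S|+|T|/2\ge 2$; you only announce a case check on $|W|\in\{2,3\}$. The mechanism you hint at also does not work by itself. A $B$-copy of the leaf edge contributes only $1$ to $\deg_{\mathbf C_B}(\ell)$. Moreover, if the leaf edge already has multiplicity $\ge 2$ inside $\mathbf C\setminus B$, there may be no $B$-copy at all. So ``the $B$-copies of the edges at a leaf'' cannot be what forces a leaf into $S$.

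\textbf{The missing idea.} What you need is the in/out-degree balance of the closed walk $\mathbf C$.
\begin{enumerate}
\item In the tree case, $H$ has no loops and no antiparallel pairs. Hence all $H$-edges at a leaf $\ell$ are copies of a single directed edge, say $(x,\ell)$; the case $(\ell,x)$ is symmetric.
\item By hypothesis (2), this edge has multiplicity at least $2$ in $\mathbf C$, so $\deg^-_{\mathbf C}(\ell)\ge 2$.
\item Since $\mathbf C$ is a circle, $\deg^+_{\mathbf C}(\ell)=\deg^-_{\mathbf C}(\ell)\ge 2$.
\item No out-edge of $\ell$ lies in $H$, so all of them lie in $B$. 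Thus $\deg^+_{\mathbf C_B}(\ell)\ge 2$ and $\ell\in S$.
\item Since $V_H\ge 2$, the tree has at least two leaves. Hence $|S|\ge 2$ and the inequality follows immediately, with no bookkeeping on $|W|$ needed.
\end{enumerate}
This is exactly the paper's argument. The paper phrases it with a sink $w$ and a source $u$ of the directed acyclic graph $\mathbf C\setminus B$ rather than with leaves: at least two edges of $\mathbf C$ leave $w$ and at least two enter $u$, and all of them must belong to $B$.
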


\begin{proof}
Since $\mathbf{C}\setminus B$ is weakly connected, one has the standard bound
\[
|V(\mathbf{C}\setminus B)|\le |E(\mathbf{C}\setminus B)|_{\mathrm{no}}+1.
\]
We distinguish the following cases:
\begin{enumerate}
\item\label{case:tree} $|V(\mathbf{C}\setminus B)| = |E(\mathbf{C}\setminus B)|_{\mathrm{no}}+1$;
\item\label{case:unicyc-nobd2} $|V(\mathbf{C}\setminus B)| = |E(\mathbf{C}\setminus B)|_{\mathrm{no}}$ and
\[
\left|\left\{v \in V(\mathbf{C} \setminus  B)\cap V(\bd(\mathbf{C} \setminus  B)):
\deg^+_{\mathbf{C}_B}(v) + \deg^-_{\mathbf{C}_B}(v) \geq 2\right\}\right| = 0;
\]
\item\label{case:unicyc-bd2} $|V(\mathbf{C}\setminus B)| = |E(\mathbf{C}\setminus B)|_{\mathrm{no}}$ and
\[
\left|\left\{v \in V(\mathbf{C} \setminus  B)\cap V(\bd(\mathbf{C} \setminus  B)):
\deg^+_{\mathbf{C}_B}(v) + \deg^-_{\mathbf{C}_B}(v) \geq 2\right\}\right| \ge 1;
\]
\item\label{case:moreedges} $|V(\mathbf{C}\setminus B)| < |E(\mathbf{C}\setminus B)|_{\mathrm{no}}$.
\end{enumerate}
Cases \ref{case:unicyc-bd2} and \ref{case:moreedges} are immediate from the definition of the left-hand side, so we
treat \ref{case:tree} and \ref{case:unicyc-nobd2}.

\medskip\noindent
\emph{Case \ref{case:tree}.}
Here the underlying simple undirected graph of $\mathbf{C}\setminus B$ is a tree. Define
\begin{align*}
V^+(\mathbf{C}\setminus B)
&=\{v\in V(\mathbf{C}\setminus B):\exists\, (v,a)\in E(\mathbf{C}\setminus B)\},\\
V^-(\mathbf{C}\setminus B)
&=\{v\in V(\mathbf{C}\setminus B):\exists\, (a,v)\in E(\mathbf{C}\setminus B)\}.
\end{align*}
Since $\mathbf{C}\setminus B$ is a finite tree, there exist vertices $w\notin V^+(\mathbf{C}\setminus B)$ and
$u\notin V^-(\mathbf{C}\setminus B)$; otherwise, starting from any vertex one could construct an infinite directed path,
contradicting finiteness.
Each of $u,w$ is incident to at least one edge of $\mathbf{C}\setminus B$, and by assumption those edges have multiplicity
at least $2$ in $\mathbf{C}$. Because $\mathbf{C}$ is a circle, there are at least two directed edges in $\mathbf{C}$
leaving $w$ and at least two entering $u$. By the choice of $u,w$, these additional edges must belong to $B$, yielding
the required inequality.

\medskip\noindent
\emph{Case \ref{case:unicyc-nobd2}.}
Since $\mathbf{C}$ is a circle and no boundary vertex has total degree at least $2$ in $\mathbf{C}_B$, we have
\begin{align}\label{entering/going_edges}
\left| V(\mathbf{C}\setminus B)\cap V(\bd(\mathbf{C}\setminus B))\right|
&=
\left|\left\{v \in V(\mathbf{C}_B):
\deg^+_{\mathbf{C}_B}(v) + \deg^-_{\mathbf{C}_B}(v) = 1\right\}\right| \\
&=
\left| E(\bd(\mathbf{C}\setminus B))  \right|
\ge 2. \nonumber
\end{align}
Indeed, if \eqref{entering/going_edges} failed, then $\mathbf{C}$ either could not enter or could not exit
$\mathbf{C}\setminus B$, contradicting that $\mathbf{C}$ is a circle. The claim follows.
\end{proof}

\begin{rem}
If $\mathbf{C}\setminus B$ has several weakly connected components, Lemma~\ref{|B|-tree} applies to each component
separately.
\end{rem}

\subsubsection*{Main combinatorial consequence}

\begin{prop}\label{tr(Y)-tr(X)-det}
Assume that $|A_{1,1}^n|\le D$ for some (fixed) constant $D>0$. Then for every $k\in\N$,
\[
\tr\!\bigl((Y^n)^k\bigr)-\tr\!\bigl((X^n)^k\bigr)-\tr\!\bigl((E^n)^k\bigr)
\ \xrightarrow[n\to\infty]{\ \mathbb{P}\ }\ 0.
\]
\end{prop}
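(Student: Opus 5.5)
We prove the convergence in $L^2$, i.e.\ that the second moment of the left-hand side of \eqref{expected_value} tends to zero; convergence in probability then follows from Chebyshev's inequality. Since $k$ is fixed, $|\mathcal C_k|$ and the number of subsets $B$ are bounded. It therefore suffices to fix $\mathbf C\in\mathcal C_k$ and $B\neq\emptyset,E(\mathbf C)$, and to show that
\[
S_{\mathbf C,B}=\sum_{\mathbf i\sim\mathbf C}\prod_{(i,j)\in\mathbf i(B)}E_{ij}\prod_{(i,j)\notin\mathbf i(B)}X_{ij}
\]
satisfies $\E|S_{\mathbf C,B}|^2\to0$.

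\emph{Expanding the second moment.} Write $\E|S|^2$ as a double sum over labelings $\mathbf i,\mathbf i'$. Condition on the $X$-part. By independence and centering, a term vanishes unless every labeled $X$-edge of the union of the two copies appears at least twice.

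\emph{Bounds on the $X$-edges.} For an edge of multiplicity $m\ge2$ we use $|A|\le D$ and $\E|X_{ij}|^m\le D^{m-2}K_n^{-(m-2)/2}\,n^{-1}$. Hence each distinct $X$-edge costs a factor at most $C/n$ (up to bounded powers of $D$, with $K_n\ge1$).

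\emph{Bounds on the $E$-edges.} Here Assumption~\ref{ass:E} is essential: it bounds operator norms, not entries. Summing over labels of vertices that touch only $E$-edges is a matrix contraction of $E=\sum_t u^t(v^t)^\star$. The key estimates are:
\begin{itemize}[label={}]
\item a vertex of $\mathbf C_B$ with total $B$-degree $1$ contributes a single coordinate $u^t_i$ or $\bar v^t_i$, which behaves like $n^{-1/2}$ in $\ell^2$;
\item a vertex of $B$-degree $\ge2$ contributes a sum such as $\sum_i |u^t_i||v^s_i|\le C$, and $\sum_i |u_i|^a$ is bounded for $a\ge2$;
\item a vertex of $V(\mathbf C\setminus B)$ that is also on the boundary and carries $B$-degree $\ge2$ loses its free factor $n$, since its sum over $i$ is absorbed in an $\ell^2$ product bound.
\end{itemize}

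\emph{Power counting.} Bound $\E|S|^2$ via Cauchy--Schwarz/Hölder in the $E$-coordinates and by counting free labels for the $X$-vertices. Consider first the diagonal-type pairing where $\mathbf i$ and $\mathbf i'$ are matched edge-by-edge, so each $X$-edge of $\mathbf C\setminus B$ appears at least twice in the union. The total $n$-exponent is then twice the quantity on the left of Lemma~\ref{|B|-tree}. Explicitly, each component contributes at most
\[
|V(\mathbf C\setminus B)|-\#\{\text{boundary vertices of }B\text{-degree}\ge2\}-\tfrac12\#\{B\text{-degree }1\}-|E(\mathbf C\setminus B)|_{\mathrm{no}}\le -1,
\]
so this part is $O(n^{-1})$ or better. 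The remaining pairings either identify vertices between the two copies or merge $X$-edges. In both cases Lemma~\ref{|B|-tree}, applied to the union graph with multiplicity $\ge2$ and componentwise as in the Remark, gives a nonpositive exponent. Any extra coincidence costs at least $n^{-1}$ or $K_n^{-1/2}$, so the contribution is $o(1)$ because $K_n\to\infty$. If $\mathbf C\setminus B$ contains an edge of multiplicity one in the union, the expectation is zero.

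\emph{Main obstacle.} The delicate step is making the $E$-contraction estimates rigorous for general shapes of $\mathbf C_B$. Entries of $E$ are not uniformly small, so one cannot bound each $E_{ij}$ by $C/n$. Instead the sums over $B$-only vertices have to be organised as products of bilinear forms $\langle u^t,\cdot\rangle$, $\langle\cdot, v^s\rangle$, and the resulting vectors indexed by boundary vertices bounded in $\ell^2$. We would handle this by first summing out all vertices internal to $\mathbf C_B$, giving bounded entries of products such as $(v^s)^\star u^t$. After that, each boundary vertex carries a vector of bounded $\ell^2$ norm. The sum over boundary labels is then controlled by $\ell^2$-norms, producing exactly the $\tfrac12$-weights and the loss of freedom recorded in Lemma~\ref{|B|-tree}.
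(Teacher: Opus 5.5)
Your plan is the paper's argument in substance. It uses the same power counting, the same $\ell^2$ estimates on $E$, and Lemma~\ref{|B|-tree}, with a single bound on $\E|S_{\mathbf C,B}|^2=|\E S_{\mathbf C,B}|^2+\Var S_{\mathbf C,B}$ in place of the paper's separate mean and variance steps. However, your treatment of the diagonal pairing $\mathbf i=\mathbf i'$ is wrong as written. There the $n$-exponent is \emph{not} twice the left-hand side of Lemma~\ref{|B|-tree} for $\mathbf C$. Moreover, that lemma cannot in general be invoked on $\mathbf C$ at all: in $\E|S|^2$ an edge of $\mathbf C\setminus B$ may have multiplicity one in $\mathbf C$, so hypothesis~(2) fails. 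For instance, take $k=2$, $\mathbf C=\{u_1,u_2,u_1\}$ and $B=\{(u_2,u_1)\}$. The lemma's left-hand side then equals $0$, so your formula predicts no decay. In fact $\E|S|^2=n^{-1}\sum_{i\neq j}|E_{ji}|^2=O(1/n)$.

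The repair is your own union-graph idea, applied to \emph{every} identification pattern, the diagonal one included.
\begin{itemize}
\item If $\mathbf i$ and $\mathbf i'$ share a vertex, the union of the two closed walks is connected with balanced degrees, hence a circle of length $2k$. Every $X$-edge has multiplicity at least $2$ in it, so Lemma~\ref{|B|-tree}, applied componentwise, gives $n$-exponent at most $-1$.
\item In the diagonal case every vertex of the doubled $\mathbf C_B$ has degree at least $2$. The exponent is then $|V(\mathbf C\setminus B)|-|V(\mathbf C\setminus B)\cap V(\bd(\mathbf C\setminus B))|-|E(\mathbf C\setminus B)|_{\mathrm{no}}\le -1$.
\item If the two labelings share no vertex, each copy satisfies the multiplicity hypothesis on its own, giving an exponent of at most $-2$.
\end{itemize}
Since $\E|X_{ij}|^m\le D^{m-2}/n$ for $m\ge 2$, this yields $\E|S_{\mathbf C,B}|^2=O(1/n)$ without using $K_n\to\infty$. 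Your step ``extra coincidences cost $n^{-1}$ or $K_n^{-1/2}$'' is therefore unnecessary, and as stated it is unjustified.

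As for the obstacle you flag, the paper avoids matrix contractions entirely by the entrywise bound $|E_{ij}|\le r\max_\ell|u^{\ell}_i|\max_\ell|v^{\ell}_j|$. After dropping the distinctness of labels, the sum over labels factors over vertices into terms $\sum_i\max_\ell|v^{\ell}_i|^{\deg^-}\max_\ell|u^{\ell}_i|^{\deg^+}$. These are $O(1)$ when the total degree is at least $2$ and $O(\sqrt n)$ when it equals $1$.
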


\begin{proof}
We prove the claim by controlling the mean and the variance.

\medskip\noindent
\textbf{Step 1: bound on the mean.}
We show that
\begin{align}\label{bound_on_mean}
\E\Bigl[\tr\!\bigl((Y^n)^k\bigr)-\tr\!\bigl((X^n)^k\bigr)-\tr\!\bigl((E^n)^k\bigr)\Bigr]
=O\!\left(\frac1n\right).
\end{align}
By \eqref{expected_value}, it suffices to bound
\begin{align}\label{o(1)_expectation}
\sum_{\mathbf{C} \in \mathcal{C}_k}
\sum_{\substack{B \subset E(\mathbf{C})\\ B \neq E(\mathbf{C}),\ \emptyset}}
\sum_{\substack{\mathbf{i}\sim \mathbf{C}\\ \mathbf{i}\in [n]^{|V(\mathbf{C})|}}}
\left| \prod_{(i,j) \in \mathbf{i}(B)} E^n_{i,j} \right|
\left| \E\prod_{(i,j) \notin \mathbf{i}(B)} X_{i,j}^n \right|
= O\!\left( \frac{1}{n} \right).
\end{align}
For the expectation in \eqref{o(1)_expectation} to be non-zero, every edge of $\mathbf{C}\setminus B$ must have
multiplicity at least $2$.

Since the number of circles in $\mathcal{C}_k$ and the number of subsets $B\subset E(\mathbf{C})$ depend only on $k$, it
is enough to fix $\mathbf{C}\in\mathcal{C}_k$ and a non-trivial $B\subset E(\mathbf{C})$ and prove
\begin{align}\label{bound_on_terms_mean_value}
\sum_{\substack{\mathbf{i}\sim \mathbf{C}\\ \mathbf{i}\in [n]^{|V(\mathbf{C})|}}}
\left| \prod_{(i,j) \in \mathbf{i}(B)} E^n_{i,j}\right|
\left| \E\prod_{(i,j) \notin \mathbf{i}(B)} X^n_{i,j} \right|
= O\!\left( \frac{1}{n} \right),
\end{align}
under the assumption that every edge in $\mathbf{C}\setminus B$ has multiplicity at least $2$. We also assume for
simplicity that $\mathbf{C}\setminus B$ is weakly connected (the case of several weakly connected components is treated
component-wise).

Since the entries of $A^n$ are bounded by $D$, we obtain
\begin{align*}
&\sum_{\substack{\mathbf{i}\sim \mathbf{C}\\ \mathbf{i}\in [n]^{|V(\mathbf{C})|}}}
\left| \prod_{(i,j) \in \mathbf{i}(B)} E^n_{i,j} \right|
\left| \E\prod_{(i,j) \notin \mathbf{i}(B)} X^n_{i,j} \right| \\
&\qquad \le D^{2k}
\left( \frac{1}{\sqrt{K_n}} \right)^{|E(\mathbf{C} \setminus  B)|}
\left( \frac{K_n}{n} \right)^{|E(\mathbf{C} \setminus  B)|_{\mathrm{no}}}
\sum_{\substack{\mathbf{i}\sim \mathbf{C}\\ \mathbf{i}\in [n]^{|V(\mathbf{C})|}}}
\left| \prod_{(i,j) \in \mathbf{i}(B)} E^n_{i,j} \right|.
\end{align*}
Since each edge of $\mathbf{C}\setminus B$ appears at least twice, we have
\begin{align}\label{mutliedge}
|E(\mathbf{C} \setminus  B)|_{\mathrm{no}} \le \frac{|E(\mathbf{C} \setminus  B)|}{2}.
\end{align}
Moreover, connectivity of the underlying simple graph yields
\begin{align}\label{vertices_edges}
|V(\mathbf{C} \setminus  B)| \le |E(\mathbf{C} \setminus  B)|_{\mathrm{no}} + 1.
\end{align}

It remains to bound
\[
\sum_{\substack{\mathbf{i}\sim \mathbf{C}\\ \mathbf{i}\in [n]^{|V(\mathbf{C})|}}}
\left| \prod_{(i,j) \in \mathbf{i}(B)} E^n_{i,j} \right|.
\]
Using the representation of $E^n$ and the entrywise bound
\[
|E^n_{i,j}|\le r\ \max_{\ell\in[r]}|u^{\ell,n}_i|\ \max_{\ell\in[r]}|v^{\ell,n}_j|,
\]
we obtain (as in \eqref{ineq_for_deterministic} in the original derivation)
\begin{align}\label{ineq_for_deterministic}
& \sum_{\substack{\mathbf{i}\sim \mathbf{C}\\ \mathbf{i}\in [n]^{|V(\mathbf{C})|}}}
\left| \prod_{(i,j) \in \mathbf{i}(B)} E^n_{i,j} \right|
\le r^k\, n^{|V(\mathbf{C} \setminus  B)| - |V(\mathbf{C} \setminus  B) \cap V(\bd(\mathbf{C} \setminus  B))|}
\prod_{v \in V(\mathbf{C}_B)}
\left(
\sum_{i \in [n]}
\max_{\ell \in [r]}|v^{\ell,n}_i|^{\deg^-_{\mathbf{C}_B}(v)}
\max_{\ell \in [r]}|u^{\ell,n}_i|^{\deg^+_{\mathbf{C}_B}(v)}
\right).
\end{align}
Now, for each $v\in V(\mathbf{C}_B)$:
\begin{itemize}
\item If $\deg^+_{\mathbf{C}_B}(v)+\deg^-_{\mathbf{C}_B}(v)\ge 2$, then by Assumption~\ref{ass:E} (and the same
Cauchy--Schwarz argument as in the original proof),
\[
\sum_{i \in [n]}
\max_{\ell \in [r]}|v^{\ell,n}_i|^{\deg^-_{\mathbf{C}_B}(v)}
\max_{\ell \in [r]}|u^{\ell,n}_i|^{\deg^+_{\mathbf{C}_B}(v)}
\le C
\]
for some constant $C>0$.
\item If $\deg^+_{\mathbf{C}_B}(v)+\deg^-_{\mathbf{C}_B}(v)=1$, then by Cauchy--Schwarz,
\[
\sum_{i\in[n]}
\max_{\ell\in[r]}|v^{\ell,n}_i|^{\deg^+_{\mathbf{C}_B}(v)}
\max_{\ell\in[r]}|u^{\ell,n}_i|^{\deg^-_{\mathbf{C}_B}(v)}
\le rC\sqrt{n}.
\]
\end{itemize}
Thus, for some $C=C(k,r)>0$,
\begin{align*}
\sum_{\substack{\mathbf{i}\sim \mathbf{C}\\ \mathbf{i}\in [n]^{|V(\mathbf{C})|}}}
\left| \prod_{(i,j) \in \mathbf{i}(B)} E^n_{i,j} \right|
\le
C\,
n^{\,|V(\mathbf{C} \setminus  B)|
- |V(\mathbf{C} \setminus  B)\cap V(\bd(\mathbf{C} \setminus  B))|
+\frac12\left|\{v \in V(\mathbf{C}_B): \deg^+_{\mathbf{C}_B}(v)+\deg^-_{\mathbf{C}_B}(v)=1\}\right| }.
\end{align*}
Because $\mathbf{C}$ is a circle, any vertex $v$ with $\deg^+_{\mathbf{C}_B}(v)=0$ or $\deg^-_{\mathbf{C}_B}(v)=0$ must
lie in $V(\mathbf{C}\setminus B)\cap V(\bd(\mathbf{C}\setminus B))$. Hence, setting
\begin{align*}
a(\mathbf{C}\setminus B)
&:=|V(\mathbf{C} \setminus  B)|
- \left|\left\{v \in V(\mathbf{C} \setminus  B)\cap V(\bd(\mathbf{C} \setminus  B)):
\deg^+_{\mathbf{C}_B}(v) + \deg^-_{\mathbf{C}_B}(v) \geq 2\right\}\right| \\
&\qquad\qquad
- \frac{1}{2} \left|\left\{v \in V(\mathbf{C}_B):
\deg^+_{\mathbf{C}_B}(v) + \deg^-_{\mathbf{C}_B}(v) = 1\right\}\right|,
\end{align*}
we conclude that for a constant $C_k$ independent of $n$,
\begin{align}\label{last_bound_mean_value}
&\sum_{\substack{\mathbf{i}\sim \mathbf{C}\\ \mathbf{i}\in [n]^{|V(\mathbf{C})|}}}
\left| \prod_{(i,j) \in \mathbf{i}(B)} E^n_{i,j} \right|
\left| \E \prod_{(i,j) \notin \mathbf{i}(B)} X^n_{i,j} \right| \nonumber \\
&\qquad \le
C_k\,
\left(K_n\right)^{|E(\mathbf{C} \setminus  B)|_{\mathrm{no}} - \frac{|E(\mathbf{C} \setminus  B)|}{2}}
\,n^{a(\mathbf{C}\setminus B)-|E(\mathbf{C} \setminus  B)|_{\mathrm{no}}}.
\end{align}
The desired $O(1/n)$ bound follows from Lemma~\ref{|B|-tree} together with \eqref{mutliedge}. This proves
\eqref{bound_on_mean}.

\medskip\noindent
\textbf{Step 2: bound on the variance.}
We show that
\begin{align}\label{bound_on_var}
\Var\Bigl(\tr\!\bigl((Y^n)^k\bigr)-\tr\!\bigl((X^n)^k\bigr)-\tr\!\bigl((E^n)^k\bigr)\Bigr)
=O\!\left(\frac{1}{nK_n}\right).
\end{align}
Recall that for complex random variables $\{W_i\}_{i=1}^m$,
\[
\Var\Bigl(\sum_{i=1}^m W_i\Bigr)
=\sum_{i_1,i_2\in[m]}
\E\Bigl[\bigl(W_{i_1}-\E W_{i_1}\bigr)\overline{\bigl(W_{i_2}-\E W_{i_2}\bigr)}\Bigr].
\]
Applying this to the expansion \eqref{expected_value} yields
\begin{align}
&\Var\Bigl(\tr\!\bigl((Y^n)^k\bigr)-\tr\!\bigl((X^n)^k\bigr)-\tr\!\bigl((E^n)^k\bigr)\Bigr)
= \sum_{\mathbf{C},\mathbf{C'} \in \mathcal{C}_k}
\sum_{\substack{B \subset E(\mathbf{C})\\ B \neq \emptyset,\ E(\mathbf{C})}}
\sum_{\substack{B' \subset E(\mathbf{C'})\\ B' \neq \emptyset,\ E(\mathbf{C'})}}
\sum_{\substack{\mathbf{i}\sim \mathbf{C}\\ \mathbf{i}\in [n]^{|V(\mathbf{C})|}}}
\sum_{\substack{\mathbf{i'}\sim \mathbf{C'}\\ \mathbf{i'}\in [n]^{|V(\mathbf{C'})|}}}
\cdot \nonumber\\
&\qquad\qquad
\prod_{(i,j) \in \mathbf{i}(B)} E^n_{i,j}\,
\prod_{(i',j') \in \mathbf{i'}(B')} \overline{ E^n_{i',j'}}\,
\E \Bigl( \prod_{(i,j) \notin \mathbf{i}(B)} X^n_{i,j} - \E \prod_{(i,j) \notin \mathbf{i}(B)} X^n_{i,j} \Bigr)
\nonumber\\
&\qquad\qquad\qquad\qquad\qquad\times
\Bigl( \prod_{(i',j') \notin \mathbf{i'}(B')} \overline{X^n_{i',j'}} - \E \prod_{(i',j') \notin \mathbf{i'}(B')} \overline{X^n_{i',j'}} \Bigr).
\label{random_term_in_var}
\end{align}
By independence of the entries of $X^n$, the expectation in \eqref{random_term_in_var} vanishes unless the labeled edge
sets in $\mathbf{C}\setminus B$ and $\mathbf{C'}\setminus B'$ agree on at least one edge, and every edge in the multigraph
$\mathbf{C}\setminus B\ \cup\ \mathbf{C'}\setminus B'$ has multiplicity at least $2$. Hence, it suffices to show that
for any such $\mathbf{C},\mathbf{C'}$ and non-trivial $B,B'$,
\begin{align}\label{covariance}
&\sum_{\substack{\mathbf{i}\sim \mathbf{C}\\ \mathbf{i}\in [n]^{|V(\mathbf{C})|}}}
\sum_{\substack{\mathbf{i'}\sim \mathbf{C'}\\ \mathbf{i'}\in [n]^{|V(\mathbf{C'})|}}}
\left| \prod_{(i,j) \in \mathbf{i}(B)} E^n_{i,j}
\prod_{(i',j') \in \mathbf{i'}(B')} \overline{E^n_{i',j'}}\right|
\cdot \nonumber\\
&\qquad\qquad\times
\left| \E \Bigl( \prod_{(i,j) \notin \mathbf{i}(B)} X^n_{i,j} - \E \prod_{(i,j) \notin \mathbf{i}(B)} X^n_{i,j} \Bigr)
\Bigl( \prod_{(i',j') \notin \mathbf{i'}(B')} \overline{X^n_{i',j'}} - \E \prod_{(i',j') \notin \mathbf{i'}(B')} \overline{X^n_{i',j'}} \Bigr) \right|
= O\!\left(\frac{1}{nK_n}\right).
\end{align}

For simplicity, assume $\mathbf{C}\setminus B$ and $\mathbf{C'}\setminus B'$ are weakly connected; the general case follows
by decomposing into weakly connected components. Since $\mathbf{C}\setminus B$ and $\mathbf{C'}\setminus B'$ share an edge,
the union $\mathbf{C}\setminus B\cup \mathbf{C'}\setminus B'$ is also weakly connected.

Set $\widetilde{\mathbf{C}}:=\mathbf{C}\cup \mathbf{C'}$. After an appropriate ordering of vertices,
$\widetilde{\mathbf{C}}$ defines a circle of length $2k$. Let $\widetilde{\mathbf{C}}_B$ denote the subgraph induced by
the edge multiset $B\cup B'$.

Using the definition of $X^n$ (cf.\ \eqref{sparser_matrices}) and the independence structure, together with the bound
$|A^n_{ij}|\le D$, we have
\[
\left| \E \Bigl( \prod_{(i,j) \notin \mathbf{i}(B)} X^n_{i,j} - \E \prod_{(i,j) \notin \mathbf{i}(B)} X^n_{i,j} \Bigr)
\Bigl( \prod_{(i',j') \notin \mathbf{i'}(B')} \overline{X^n_{i',j'}} - \E \prod_{(i',j') \notin \mathbf{i'}(B')} \overline{X^n_{i',j'}} \Bigr) \right|
\le
2 D^{2k} \left( \frac{K_n}{n} \right)^{|E(\mathbf{C} \setminus  B \cup \mathbf{C}' \setminus B')|_{\mathrm{no}}}.
\]
Moreover, as in \eqref{ineq_for_deterministic}, one shows that
\[
\sum_{\substack{\mathbf{i}\sim \mathbf{C}\\ \mathbf{i}\in [n]^{|V(\mathbf{C})|}}}
\sum_{\substack{\mathbf{i'}\sim \mathbf{C'}\\ \mathbf{i'}\in [n]^{|V(\mathbf{C'})|}}}
\left| \prod_{(i,j) \in \mathbf{i}(B)} E^n_{i,j}\prod_{(i',j') \in \mathbf{i'}(B')} \overline{E^n_{i',j'}}\right|
\le C\, n^{a(\mathbf{C}\setminus B,\mathbf{C'}\setminus B')},
\]
where
\begin{align*}
a(\mathbf{C}\setminus B,\mathbf{C'}\setminus B')
&=
|V(\mathbf{C}\setminus B)\cup V(\mathbf{C'}\setminus B')|
-\left|\left\{v:\deg^+_{\widetilde{\mathbf{C}}_B}(v)+\deg^-_{\widetilde{\mathbf{C}}_B}(v)\ge 2\right\}\right| \\
&\qquad
-\frac12\left|\left\{v:\deg^+_{\widetilde{\mathbf{C}}_B}(v)+\deg^-_{\widetilde{\mathbf{C}}_B}(v)= 1\right\}\right|.
\end{align*}
Since $\widetilde{\mathbf{C}}$ is a circle and every edge in $\mathbf{C}\setminus B\cup \mathbf{C'}\setminus B'$ has
multiplicity at least $2$, Lemma~\ref{|B|-tree} applies and yields (for some $C_k$ depending only on $k,M,r$)
\[
\eqref{covariance}
\le
\frac{C_k}{n}\cdot
\frac{1}{\sqrt{K_n^{|E(\mathbf{C}\setminus B)|+|E(\mathbf{C'}\setminus B')|}}}
\cdot
K_n^{|E(\mathbf{C}\setminus B\cup \mathbf{C'}\setminus B')|_{\mathrm{no}}}.
\]
Finally, since all edges in $\mathbf{C}\setminus B\cup \mathbf{C'}\setminus B'$ appear at least twice and the two graphs
share an edge, we have
\[
|E(\mathbf{C}\setminus B\cup \mathbf{C'}\setminus B')|_{\mathrm{no}}
\le
2\bigl(|E(\mathbf{C}\setminus B)|+|E(\mathbf{C'}\setminus B')|-1\bigr),
\]
and therefore
\[
\eqref{covariance}\le C_k\,\frac{1}{nK_n}.
\]
This proves \eqref{bound_on_var}.

\medskip
Combining \eqref{bound_on_mean} and \eqref{bound_on_var} gives
\(
\tr((Y^n)^k)-\tr((X^n)^k)-\tr((E^n)^k)\to 0
\)
in probability, completing the proof.
\end{proof}

We continue with the asymptotic analysis of the joint law of the traces
$\tr((X^n)^k)$. Recall the notation from Theorem~\ref{characteristic_poly_thm}
and define, for $k\in\N$, the sequence
\[
\mathrm{mean}_k
:= \mathbf{1}_{\{k \text{ even}\}}\bigl(\E A_{1,1}^2\bigr)^{k/2}.
\]

\begin{lem}\label{lemma_moments_of_X^n}
For any $k\ge 1$, if $|A_{1,1}^n|\le D$ almost surely, then
\[
\bigl(\tr(X^n),\ldots,\tr((X^n)^k)\bigr)
\xrightarrow[n\to\infty]{\mathrm{law}}
\bigl(Z_1+\mathrm{mean}_1,\ \sqrt{2}\,Z_2+\mathrm{mean}_2,\ \ldots,\ \sqrt{k}\,Z_k+\mathrm{mean}_k\bigr).
\]
\end{lem}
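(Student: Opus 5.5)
We prove the joint convergence by the method of moments applied to the centered, cycle-decomposed traces, following the combinatorics of \cite{bordenave2022convergence} and its sparse versions \cite{coste2023sparse,hachem2025spectral}. We expand
\[
\tr\bigl((X^n)^m\bigr)=\sum_{\mathbf{C}\in\mathcal C_m}\sum_{\mathbf{i}\sim\mathbf{C}}\prod_{(i,j)\in\mathbf{i}(E(\mathbf{C}))}X^n_{ij},
\]
and isolate three families of circles. The first family consists of \emph{simple cycles}: $m$ distinct vertices, every edge of multiplicity one. The second consists of \emph{doubled simple cycles}, present only for $m$ even: a closed walk going twice around a simple cycle of length $m/2$ that is traversed in the same direction, so that each edge has multiplicity exactly $2$. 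The third family is everything else.

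\textbf{Step 1: the remainder.} For $\mathbf C$ in the third family we show that the contribution tends to zero in $L^2$. Its mean is bounded by $D^{m}K_n^{|E|_{\mathrm{no}}-m/2}n^{|V|-|E|_{\mathrm{no}}}$ whenever all multiplicities are at least $2$. Here $|E|_{\mathrm{no}}\le m/2$, and $|V|\le|E|_{\mathrm{no}}$ since a circle is strongly connected, hence not a tree. Equality in both inequalities forces $\mathbf C$ to be a doubled cycle. This gives either a factor $1/n$, or a factor $K_n^{-1/2}$ or smaller. Circles with some edge of multiplicity at least $3$ carry the factor $K_n^{|E|_{\mathrm{no}}-m/2}\to0$; this is exactly where Assumption~\ref{ass:K} is used. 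For the second moment we pair two circles sharing an edge, as in Step~2 of Proposition~\ref{tr(Y)-tr(X)-det}, and the union-graph count gives $O(1/n)+O(K_n^{-1/2})$ unless both circles are simple cycles. Doubled cycles contribute $\sum_{\mathbf i}\prod_{e}X_e^2$. Its mean is
\[
\frac{n!}{(n-m/2)!}\,\frac{(\E A_{11}^2)^{m/2}}{n^{m/2}}\cdot\frac{1}{m/2}\cdot\frac{m}{2}\cdot(\text{rotation count})\to \mathrm{mean}_m .
\]
One checks that the rotational labelings of the closed walk cancel the $1/(m/2)$ symmetry, and that its variance is $O(1/K_n)$, because squares of $X_e^2$ produce $\E|X_e|^4\sim 1/(nK_n)$.

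\textbf{Step 2: the Gaussian limit.} It remains to show that
\[
P_m:=\sum_{\text{simple cycles of length } m}\prod_e X_e,\qquad m\le k,
\]
jointly converge to $\sqrt m\,Z_m$. Here $\E|P_m|^2=m\,\frac{n!}{(n-m)!}n^{-m}\to m$, and $\E P_m^2\to m(\E A_{11}^2)^m$ because only identical cycles pair. Moreover $\E P_mP_{m'}=0$ for $m\ne m'$. For general mixed moments
\[
\E\prod_\ell P_{m_\ell}^{a_\ell}\overline{P_{m_\ell}}^{\,b_\ell}
\]
we use the genus-type expansion: the leading term comes from perfect matchings of the cycles into identical pairs, which reproduces Wick's formula for the complex Gaussians $Z_m$ with the prescribed pseudo-covariance. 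Every other overlap configuration has more edge identifications than vertex savings, and is $O(1/n)$ or $O(K_n^{-1/2})$ by the same counting as in Step~1 together with Lemma~\ref{|B|-tree}-type inequalities. Since Gaussian laws are determined by their moments, this yields convergence in law.

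\textbf{Main obstacle.} The delicate part is the higher mixed moments in Step~2 in the sparse regime. Overlapping cycles can share edges with high multiplicity, and then each extra multiplicity costs $K_n^{-1/2}$ rather than $n^{-1/2}$. We must check that every non-matching configuration loses at least a factor $K_n^{-1/2}$ or $n^{-1}$. We will do this by bounding the weight by $K_n^{|E|_{\mathrm{no}}-|E|/2}n^{|V|-|E|_{\mathrm{no}}}$ and using that the union of the cycles is a union of strongly connected components each having $|V|\le|E|_{\mathrm{no}}$. Equality in all constraints forces a perfect pairing of identical cycles.
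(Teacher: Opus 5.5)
Your proposal is correct and follows essentially the paper's route: the paper also writes $\tr((X^n)^k)=t_n^k+r_n^k$ (simple $k$-cycles versus all other circles). It then proves Wick-type convergence of mixed moments of the $t_n^{k_\ell}$ and $r_n^k\to\mathrm{mean}_k$ in probability, by feeding the sparse weight bound \eqref{joint_Xn} (your $K_n^{|E|_{\mathrm{no}}-|E|/2}n^{|V|-|E|_{\mathrm{no}}}$) into the arguments of Lemmas~3.4--3.5 of \cite{bordenave2022convergence}; your separate treatment of the doubled cycles, which carry $\mathrm{mean}_m$, simply makes that second step explicit. One cosmetic point: the doubled-cycle mean is just $\frac{n!}{(n-m/2)!}$ index sequences, each of expectation $(\E A_{11}^2)^{m/2}n^{-m/2}$, so the extra symmetry and rotation factors in your display are superfluous.
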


\begin{proof}
When $K_n\ge \log n$, the claim follows directly from
Propositions~2.3 and~3.6 of~\cite{hachem2025spectral}, applied to our model.

In general, the proof is analogous to that of Lemmas~3.4 and~3.5
in~\cite{bordenave2022convergence}. For completeness, we sketch the main steps.

Recall the notation $\mathcal{C}_k$ for the collection of directed circles of
length $k$. Let $k_1,\ldots,k_m\in\N$, let
$\mathbf{C}_\ell\in\mathcal{C}_{k_\ell}$ for $\ell=1,\ldots,m$, and let
$s_1,\ldots,s_m\in\{\cdot,*\}$, where for any complex number $x$ we set
$x^{\cdot}=x$ and $x^*=\overline{x}$. Define the multigraph
\[
\widetilde{\mathbf{C}} := \bigcup_{\ell=1}^m \mathbf{C}_\ell .
\]
Then the joint contribution of these circles satisfies
\begin{align}\label{joint_Xn}
\sum_{\mathbf{i}\sim\widetilde{\mathbf{C}}}
\left|
\E \prod_{\ell=1}^m
\prod_{(v,u)\in E(\mathbf{C}_\ell)}
\bigl(X^n_{\mathbf{i}(v),\mathbf{i}(u)}\bigr)^{s_\ell}
\right|
\le
D^{\sum_{\ell=1}^m k_\ell}
K_n^{|E(\widetilde{\mathbf{C}})|_{\mathrm{no}}
-\frac{1}{2}\sum_{\ell=1}^m |E(\mathbf{C}_\ell)|}
\,n^{|V(\widetilde{\mathbf{C}})|-|E(\widetilde{\mathbf{C}})|_{\mathrm{no}}}.
\end{align}
Since the entries of $X^n$ are centered, the contribution in
\eqref{joint_Xn} is negligible unless
\[
|V(\widetilde{\mathbf{C}})| = |E(\widetilde{\mathbf{C}})|_{\mathrm{no}}
\qquad\text{and}\qquad
2\,|E(\widetilde{\mathbf{C}})|_{\mathrm{no}}
= \sum_{\ell=1}^m |E(\mathbf{C}_\ell)|.
\]

We proceed as in~\cite{bordenave2022convergence}.
Decompose $\mathcal{C}_k$ as $\mathcal{C}_k=\mathcal{C}_k^1\cup\mathcal{C}_k^2$,
where $\mathcal{C}_k^1$ consists of circles with exactly $k$ distinct vertices,
and $\mathcal{C}_k^2$ consists of circles with fewer than $k$ vertices.
Accordingly, we write
\begin{align*}
\tr((X^n)^k)
&=
\sum_{\mathbf{C}\in\mathcal{C}_k^1}
\sum_{\substack{\mathbf{i}\sim\mathbf{C}\\ \mathbf{i}\in[n]^k}}
\prod_{(v,u)\in E(\mathbf{C})}
X^n_{\mathbf{i}(v),\mathbf{i}(u)}
+
\sum_{\mathbf{C}\in\mathcal{C}_k^2}
\sum_{\substack{\mathbf{i}\sim\mathbf{C}\\ \mathbf{i}\in[n]^k}}
\prod_{(v,u)\in E(\mathbf{C})}
X^n_{\mathbf{i}(v),\mathbf{i}(u)} \\
&=: t_n^k + r_n^k .
\end{align*}

The proof is complete once we establish the following two facts.

\begin{enumerate}
\item
For any $k_1,\ldots,k_m\in\N$ and $s_1,\ldots,s_m\in\{\cdot,*\}$,
\begin{align}\label{joint_law_circles=k}
\E \prod_{\ell=1}^m (t_n^{k_\ell})^{s_\ell}
\ \xrightarrow[n\to\infty]{}\ 
\E \prod_{\ell=1}^m \bigl(\sqrt{k_\ell}\,Z_{k_\ell}\bigr)^{s_\ell}.
\end{align}

\item
For any $k\in\N$,
\begin{align}\label{joint_law_circles<k}
r_n^k \xrightarrow[n\to\infty]{\mathbb{P}} \mathrm{mean}_k .
\end{align}
\end{enumerate}

Given the bound \eqref{joint_Xn}, the convergence
\eqref{joint_law_circles=k} and \eqref{joint_law_circles<k} follow exactly as in
the proofs of Lemmas~3.4 and~3.5 of~\cite{bordenave2022convergence}, respectively.
\end{proof}

We conclude with an asymptotic bound on $\E|\tr((Y^n)^k)|^2$, which is needed to
establish relative compactness.

\begin{lem}\label{trace_bounded_Y^n_lem}
For any $k\in\N$, if $|A_{1,1}^n|\le D$, then there exists a constant
$C=C(r,k)>0$ such that
\[
\E\bigl|\tr((Y^n)^k)\bigr|^2 \le C .
\]
\end{lem}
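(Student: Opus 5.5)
The plan is to split $\tr((Y^n)^k)$ into three pieces and bound each second moment separately. We write
\[
\tr((Y^n)^k) = \tr((X^n)^k) + \tr((E^n)^k) + R_n^k,
\]
where $R_n^k$ is the cross term in \eqref{expected_value}. By the elementary inequality $|a+b+c|^2\le 3(|a|^2+|b|^2+|c|^2)$, it suffices to bound $\E|\tr((X^n)^k)|^2$, $|\tr((E^n)^k)|^2$ and $\E|R_n^k|^2$ separately by constants depending only on $r,k$ (and $D$).

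The deterministic term is immediate. $E^n$ has rank at most $r$, so $|\tr((E^n)^k)|\le r\|E^n\|^k$. By Assumption~\ref{ass:E}, $\|E^n\|\le \sum_t\|u^{t,n}\|\|v^{t,n}\|\le C^2$.

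For the cross term, write $\E|R_n^k|^2=\Var(R_n^k)+|\E R_n^k|^2$. Step 1 of the proof of Proposition~\ref{tr(Y)-tr(X)-det} gives $\E R_n^k=O(1/n)$, and Step 2 gives $\Var(R_n^k)=O(1/(nK_n))$. Both estimates were derived only under $|A_{11}|\le D$ and Assumption~\ref{ass:E}, so $\E|R_n^k|^2$ is bounded (and even tends to zero).

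For the random-matrix term, we expand $\E|\tr((X^n)^k)|^2$ as a sum over pairs of circles $\mathbf{C},\mathbf{C'}\in\mathcal{C}_k$ and apply the bound \eqref{joint_Xn} with $m=2$ to the union $\widetilde{\mathbf{C}}=\mathbf{C}\cup\mathbf{C'}$. A nonzero expectation requires every edge of $\widetilde{\mathbf{C}}$ to have multiplicity at least $2$, so $|E(\widetilde{\mathbf{C}})|_{\mathrm{no}}\le k$ and the power of $K_n$ in \eqref{joint_Xn} is nonpositive. For the power of $n$, we split into two cases.
\begin{itemize}
\item If $\widetilde{\mathbf{C}}$ is weakly connected, then $|V|\le |E|_{\mathrm{no}}+1$. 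We need the stronger bound $|V|\le |E|_{\mathrm{no}}$. This holds because every weakly connected component of the union of circles is a union of closed walks, hence contains a cycle, so its underlying simple undirected graph cannot be a tree. Indeed, a directed closed walk whose underlying graph is a tree must traverse some edge in both directions; but a directed edge $(a,b)$ and its reverse $(b,a)$ are distinct directed edges that together already form a cycle in the multigraph count. The bound $|V|\le |E|_{\mathrm{no}}$ therefore applies to each component.
\item If $\mathbf{C}$ and $\mathbf{C'}$ are vertex-disjoint, the two circles contribute separately, and the same cycle argument applies to each.
\end{itemize}
In both cases the exponent of $n$ is at most $0$. Since the number of pairs of circles depends only on $k$, this gives $\E|\tr((X^n)^k)|^2\le C$.

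The point I expect to require the most care is the exponent of $n$ in the $X$-term: justifying $|V(\widetilde{\mathbf{C}})|\le|E(\widetilde{\mathbf{C}})|_{\mathrm{no}}$ per component, in particular handling loops and reversed edges correctly. A fallback that avoids the combinatorics is available: Lemma~\ref{lemma_moments_of_X^n} and the arguments of Lemmas~3.4--3.5 in \cite{bordenave2022convergence} show that these second moments converge to finite limits, namely $k+|\mathrm{mean}_k|^2$, and a convergent sequence is bounded for large $n$. Finitely many initial $n$ are harmless, since each term is trivially finite because $|X^n_{ij}|\le D/\sqrt{K_n}$.
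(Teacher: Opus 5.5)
Your proof is correct and shares the paper's skeleton: separate the $X$-part, the $E$-part and the mixed part, make the mixed part negligible using the combinatorics behind Proposition~\ref{tr(Y)-tr(X)-det}, and bound the $X$-part. Two steps are done differently.

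The paper expands $\E|\tr((Y^n)^k)|^2$ over pairs $(\mathbf{C},B)$, $(\mathbf{C}',B')$ and asserts that every term with a nontrivial $B$ or $B'$ is $O(1/n)$ as in \eqref{covariance}. This gives $\E|\tr((Y^n)^k)|^2$ as $\E|\tr((X^n)^k)|^2+|\tr((E^n)^k)|^2$ plus a cross term plus $O(1/n)$. It then bounds $\E|\tr((X^n)^k)|^2$ by citing Lemma~\ref{lemma_moments_of_X^n}.

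You instead use $|a+b+c|^2\le 3(|a|^2+|b|^2+|c|^2)$, which removes every product between the pieces. The mixed part then only needs $|\E R_n^k|^2+\Var(R_n^k)$, exactly the two quantities handled in Steps 1--2. You bound the $X$-part directly from \eqref{joint_Xn} with $m=2$, together with the count $|V|\le|E|_{\mathrm{no}}$ on each weakly connected component.

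The paper's route gives more: the second moment of $\tr((Y^n)^k)$ up to $O(1/n)$. Yours is self-contained and is actually more careful on the $X$-term. Lemma~\ref{lemma_moments_of_X^n} only asserts convergence in law, which does not by itself bound second moments, so the paper implicitly relies on moment estimates inside that lemma's proof. Your direct count supplies them.

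Two small corrections:
\begin{itemize}
\item Your claim that the underlying simple undirected graph of a component cannot be a tree is false as written; the circle $a\to b\to a$ is a counterexample. What you need, and what your next sentence is really arguing, is this. Each component $\Gamma$ is Eulerian, so either its underlying simple graph contains a cycle, or it is a tree each of whose edges is traversed in both orientations. In the tree case $|E(\Gamma)|_{\mathrm{no}}\ge 2(|V(\Gamma)|-1)\ge|V(\Gamma)|$ when $|V(\Gamma)|\ge 2$, and $|V(\Gamma)|=1$ forces a loop.
\item You only use $\Var(R_n^k)=O(1)$, which is just as well. The rate $O(1/(nK_n))$ claimed in \eqref{bound_on_var}, which you quote, does not hold in general. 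For $k=2$ the mixed term is $2\tr(X^nE^n)$, whose variance is $4\|E^n\|_F^2/n$. The argument of Step 2 does give $O(1/n)$, which is all you need.
\end{itemize}
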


\begin{proof}
We begin with the expansion
\begin{align}
\E\bigl|\tr((Y^n)^k)\bigr|^2
&=
\sum_{\mathbf{C},\mathbf{C}'\in\mathcal{C}_k}
\sum_{\substack{B\subset E(\mathbf{C})\\ B\neq\emptyset,E(\mathbf{C})}}
\sum_{\substack{B'\subset E(\mathbf{C}')\\ B'\neq\emptyset,E(\mathbf{C}')}}
\sum_{\substack{\mathbf{i}\sim\mathbf{C}\\ \mathbf{i}\in[n]^{|V(\mathbf{C})|}}}
\sum_{\substack{\mathbf{i}'\sim\mathbf{C}'\\ \mathbf{i}'\in[n]^{|V(\mathbf{C}')|}}}
\cdots \nonumber\\
&\qquad\qquad
\prod_{(i,j)\in B} E^n_{i,j}
\prod_{(i',j')\in B'} E^n_{i',j'}
\,
\E\!\left[
\prod_{(i,j)\notin\mathbf{i}(B)} X^n_{i,j}
\prod_{(i',j')\notin\mathbf{i}'(B')}
\overline{X^n_{i',j'}}
\right].
\label{mean_value_bound}
\end{align}

We proceed as in the proof of Proposition~\ref{tr(Y)-tr(X)-det}.
Fix $\mathbf{C},\mathbf{C}'\in\mathcal{C}_k$ and non-trivial subsets
$B\subset E(\mathbf{C})$, $B'\subset E(\mathbf{C}')$.
It suffices to show that
\begin{align}\label{sum_bound_EtrY^b}
\sum_{\substack{\mathbf{i}\sim\mathbf{C}\\ \mathbf{i}\in[n]^{|V(\mathbf{C})|}}}
\sum_{\substack{\mathbf{i}'\sim\mathbf{C}'\\ \mathbf{i}'\in[n]^{|V(\mathbf{C}')|}}}
\left|
\prod_{(i,j)\in B} E^n_{i,j}
\prod_{(i',j')\in B'} E^n_{i',j'}
\,
\E\!\left[
\prod_{(i,j)\notin\mathbf{i}(B)} X^n_{i,j}
\prod_{(i',j')\notin\mathbf{i}'(B')}
\overline{X^n_{i',j'}}
\right]
\right|
= O\!\left(\frac{1}{n}\right).
\end{align}

If any edge of the multigraph
$\mathbf{C}\cup\mathbf{C}'\setminus(B\cup B')$ has multiplicity one, then the
expectation in \eqref{sum_bound_EtrY^b} vanishes. Otherwise, every edge appears
with multiplicity at least two, and the proof of \eqref{sum_bound_EtrY^b} is
identical to that of \eqref{covariance}.

Consequently,
\[
\E\bigl|\tr((Y^n)^k)\bigr|^2
=
\E\bigl|\tr((X^n)^k)\bigr|^2
+ |\tr((E^n)^k)|^2
+ 2\,\E\bigl[\tr((X^n)^k)\tr((E^n)^k)\bigr]
+ O\!\left(\frac{1}{n}\right).
\]
By Assumption~\ref{ass:E}, $|\tr((E^n)^k)|^2$ is uniformly bounded, and
$\E|\tr((X^n)^k)|^2$ is bounded by Lemma~\ref{lemma_moments_of_X^n}. This proves
the claim.
\end{proof}

\medskip
All the necessary ingredients are now in place to complete the proof of
Theorem~\ref{characteristic_poly_thm}.
 
\begin{proof}[Proof of Theorem~\ref{characteristic_poly_thm}]
Recall the notation from Proposition~\ref{trunc}. We first show that
\begin{align}\label{q^D_sim}
    q_n^D(z)\sim_n b_n(z)\det\!\bigl(I-z X^{n,D}\bigr)
    \sim_n b_n(z)\,\kappa^D(z)\,\exp(-F),
\end{align}
where \(\kappa^D(z)=\sqrt{1-z^2 \mathbb{E}(A^D_{1,1})^2}\).
Moreover, in what follows set
\[
Q_n^D(z)= b_n(z)\,\kappa^D(z)\,\exp(-F),
\qquad
Q_n(z)= b_n(z)\,\kappa(z)\,\exp(-F).
\]

Notice that for \(z\in\mathbb{C}\), the series
\(\sum_{k=1}^\infty \frac{z^k}{k}(Y^{n,D})^k\) is well-defined for \(|z|\) small
enough, and we can express \(q_n^D(z)\) as
\begin{equation}\label{q_=tr}
    q_n^D(z)=\exp\!\left(-\sum_{k=1}^\infty
    \operatorname{tr}\!\bigl((Y^{n,D})^k\bigr)\,\frac{z^k}{k}\right).
\end{equation}

By Proposition~6.1 of \cite{coste2023sparse}, we can rewrite, for \(|z|\) small enough,
\[
\exp\!\left(-\sum_{k=1}^\infty \operatorname{tr}\!\bigl((Y^{n,D})^k\bigr)\frac{z^k}{k}\right)
=
1+\sum_{k=1}^n
P_k\!\Bigl(\operatorname{tr}(Y^{n,D}),\ldots,\operatorname{tr}\!\bigl((Y^{n,D})^k\bigr)\Bigr)\,
\frac{z^k}{k!},
\]
for some polynomials \(P_k\) which do not depend on \(n\).
By analytic continuation,
\[
q_n^D(z)=
1+\sum_{k=1}^n
P_k\!\Bigl(\operatorname{tr}(Y^{n,D}),\ldots,\operatorname{tr}\!\bigl((Y^{n,D})^k\bigr)\Bigr)\,
\frac{z^k}{k!}
\]
for any \(z\in\mathbb{C}\).

Thus, it suffices to examine the joint law of
\(\bigl(\operatorname{tr}(Y^{n,D}),\ldots,\operatorname{tr}\!\bigl((Y^{n,D})^k\bigr)\bigr)\)
for any \(k\in\mathbb{N}\).
In this case, we combine Proposition~\ref{tr(Y)-tr(X)-det},
Lemma~\ref{lemma_moments_of_X^n}, and Lemma~\ref{trace_bounded_Y^n_lem} to conclude
\begin{multline}\label{tracessim}
\bigl(\operatorname{tr}(Y^{n,D}),\ldots,\operatorname{tr}\!\bigl((Y^{n,D})^k\bigr)\bigr)
\sim_n
\bigl(\operatorname{tr}(X^{n,D}),\ldots,\operatorname{tr}\!\bigl((X^{n,D})^k\bigr)\bigr)
+\bigl(\operatorname{tr}(E^n),\ldots,\operatorname{tr}\!\bigl((E^n)^k\bigr)\bigr) \\
\sim_n
\bigl(Z_1+\operatorname{mean}_1^D,\sqrt{2}\,Z_2+\operatorname{mean}_2^D,\ldots,
\sqrt{k}\,Z_k+\operatorname{mean}_k^D\bigr)
+\bigl(\operatorname{tr}(E^n),\ldots,\operatorname{tr}\!\bigl((E^n)^k\bigr)\bigr).
\end{multline}
Notice that the Gaussian random variables \(Z_k\) do not depend on \(D\).
By Proposition~\ref{cvgH} and \eqref{tracessim}, we deduce that \eqref{q^D_sim} holds.

We continue with the proof of \eqref{q_nsimthm}. Fix an integer \(m>0\), and an
\(m\)-tuple \((z_1,\ldots,z_m)\in\mathcal{D}(0,1)^m\).
Let \(\varphi:\mathbb{R}^{2m}\to\mathbb{R}\) be a bounded Lipschitz function.
Since for all \(z\in\mathcal{D}(0,1)\),
\[
\lim_{D\to\infty}\kappa^D(z)=\kappa(z),
\]
it follows that
\[
\sup_n\left|
\mathbb{E}\varphi\bigl(Q_n(z_1),\ldots,Q_n(z_m)\bigr)
-\mathbb{E}\varphi\bigl(Q_n^D(z_1),\ldots,Q_n^D(z_m)\bigr)
\right|\xrightarrow[D\to\infty]{}0.
\]
Therefore,
\begin{align}\label{phiapproxi}
&\left|
\mathbb{E}\varphi\bigl(q_n(z_1),\ldots,q_n(z_m)\bigr)
-\mathbb{E}\varphi\bigl(Q_n(z_1),\ldots,Q_n(z_m)\bigr)
\right| \nonumber\\
&\le
\left|
\mathbb{E}\varphi\bigl(q_n(z_1),\ldots,q_n(z_m)\bigr)
-\mathbb{E}\varphi\bigl(q_n^D(z_1),\ldots,q_n^D(z_m)\bigr)
\right| \nonumber\\
&\quad+
\left|
\mathbb{E}\varphi\bigl(q_n^D(z_1),\ldots,q_n^D(z_m)\bigr)
-\mathbb{E}\varphi\bigl(Q_n^D(z_1),\ldots,Q_n^D(z_m)\bigr)
\right| \nonumber\\
&\quad+
\left|
\mathbb{E}\varphi\bigl(Q_n^D(z_1),\ldots,Q_n^D(z_m)\bigr)
-\mathbb{E}\varphi\bigl(Q_n(z_1),\ldots,Q_n(z_m)\bigr)
\right|.
\end{align}
The first term on the right-hand side is bounded by a positive number
\(\varepsilon_D\) independent of \(n\) and converging to zero as \(D\to\infty\)
by Proposition~\ref{trunc}. The second term converges to zero as \(n\to\infty\)
since \(q_n^D(z)\sim_n Q_n^D(z)\). We just showed that the third term can be
controlled similarly to the first term. Thus, the left-hand side converges to
zero as \(n\to\infty\). By applying Proposition~\ref{cvgH}, we obtain
\(q_n\sim_n Q_n\).

Next we prove \eqref{thm_statement_char_pol}. Set
\[
S_n(z)=b_n(z)\det\!\bigl(I-zX^n\bigr),
\qquad
S_n^D(z)=b_n(z)\det\!\bigl(I-zX^{n,D}\bigr).
\]
Given the bounds from \eqref{phiapproxi} and \eqref{q^D_sim}, it is sufficient to prove that
\[
\sup_n\left|
\mathbb{E}\varphi\bigl(S_n(z_1),\ldots,S_n(z_m)\bigr)
-\mathbb{E}\varphi\bigl(S_n^D(z_1),\ldots,S_n^D(z_m)\bigr)
\right|\xrightarrow[D\to\infty]{}0.
\]
The latter can be proven easily by using Assumption~\ref{ass:E} to bound \(b_n(z)\)
and comparing \(\det(I-zX^n)\) with \(\det(I-zX^{n,D})\) as is done in
Lemma~3.3 of \cite{bordenave2022convergence}.
\end{proof}

\section{Comparison with a Gaussian matrix}
\label{comp-gauss} 
The goal of this section is to compare the spectral properties of matrix $X^n$ as defined in \eqref{sparser_matrices} with analogous quantities of a Gaussian random matrix $G^n\in \R^{n\times n}$ with i.i.d. centered real Gaussian entries, each with variance $n^{-1}$. We mostly rely on results from \cite{brailovskaya2024universality}. 

Let $\xi \in \C$ be such that $|\xi| > 1$. Consider the following matrices
\begin{equation}\label{def:hermit}
    H^n(\xi) = \begin{bmatrix}
        0 & X^n - \xi I_n \\
        (X^n - \xi I_n)^{\star} & 0
    \end{bmatrix}\qquad \textrm{and}\qquad S^n(\xi) = \begin{bmatrix}
        0 & G^n - \xi I_n \\
        (G^n - \xi I_n)^{\star} & 0
    \end{bmatrix}\ . 
\end{equation}
As is well known, the set of eigenvalues of $H^n(\xi)$ counting multiplicities 
coincides with the union of the set of singular values of $X^n-\xi I_n$ counting multiplicities
and the set of the opposites of these singular values. A similar remark holds for $S^n(\xi)$ and $G^n-\xi I_n$. 
For simplicity we will often write $H^n, S^n$ instead of $H^n(\xi), S^n(\xi)$.

We start with a comparison of the empirical spectral distribution of the matrices.

\begin{prop}\label{prop_ESD}
The spectral measures of $H^n$ and $S^n$ are asymptotically equivalent, that 
is, writing 
\[
\nu^{H^n} = \frac{1}{2n} \sum_{i \in [2n]}\delta_{\lambda_i(H^n)} 
 \quad \text{and} \quad 
\nu^{S^n} = \frac{1}{2n} \sum_{i \in [2n]}\delta_{\lambda_i(S^n)} , 
\]
it holds that $\nu^{H^n} \sim \nu^{S^n}$ as random variables valued in the
space of probability measures on $\R$. 
\end{prop}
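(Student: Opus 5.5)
Both $\nu^{H^n}$ and $\nu^{S^n}$ are the symmetrized empirical singular value distributions of $X^n-\xi I_n$ and $G^n-\xi I_n$. It therefore suffices to show that their Stieltjes transforms are asymptotically close pointwise on $\C^+$, and to check relative compactness. I would argue through Stieltjes transforms rather than moments, since moments of $X^n$ are not uniformly controlled without truncation.

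\emph{Relative compactness.} Since $|\xi|$ is fixed, we have $\int x^2\, d\nu^{H^n} = \frac1n \tr\big((X^n-\xi)(X^n-\xi)^\star\big)$. Its expectation equals $\frac1n\E\tr X^nX^{n\star} + |\xi|^2 = 1+|\xi|^2$, and the cross terms have zero mean. The same identity holds for $S^n$. By Markov's inequality, the laws of $\nu^{H^n}$ and $\nu^{S^n}$ are therefore tight in the space of probability measures on $\R$, and Prokhorov's theorem gives relative compactness.

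\emph{Comparison of Stieltjes transforms.} For $z\in\C^+$ set $m_H(z)=\frac1{2n}\tr(H^n-z)^{-1}$ and similarly $m_S$. By a standard metrization argument (narrow convergence is determined by the Stieltjes transforms on a countable dense subset of $\C^+$), it is enough to prove that for each fixed $z\in\C^+$,
\[
m_H(z)-\E m_H(z)\to 0,\quad m_S(z)-\E m_S(z)\to0,\quad \E m_H(z)-\E m_S(z)\to 0
\]
in probability. Each convergence is handled as follows.
\begin{enumerate}
\item \emph{Concentration.} Changing one row or column of $X^n$ perturbs $H^n$ by a matrix of rank at most $2$. Hence $m_H$ changes by at most $C/(n\Im z)$, and a bounded-difference (Efron--Stein) inequality over the $n$ independent rows gives $\Var m_H(z)=O(1/(n(\Im z)^2))$. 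This requires no moment assumption and no condition on $K_n$. The same argument applies to $m_S$.
\item \emph{Expectations.} I would replace $X^n$ by its truncation $X^{n,D}$ as in Proposition~\ref{trunc}. The resulting rank and perturbation error is small uniformly in $n$ as $D\to\infty$, for instance via $\frac1n\|X^n-X^{n,D}\|_F^2 \to \E|A-A^D|^2$ combined with the Hoffman--Wielandt / Lipschitz bound $|m_H-m_{H^D}|\le \frac{C}{(\Im z)^2}\big(\frac1n\|X-X^D\|_F^2\big)^{1/2}$. For bounded entries I would then apply the universality theorem of \cite{brailovskaya2024universality}, which compares $\E\tr(H-z)^{-1}$ (or spectra in Hausdorff distance) with the Gaussian model having the same covariance. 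Matching the covariance means that $G^n$ must be a Gaussian model with variance profile $1/n$ and possibly a nonzero pseudo-variance $\E A_{11}^2$.
\end{enumerate}

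\emph{Main obstacle.} The hardest step is the comparison of expectations in the very sparse regime. The error term of \cite{brailovskaya2024universality} is governed by a parameter $R\sim D/\sqrt{K_n}$ together with polylogarithmic factors. This is only small once $K_n\gg$ a power of $\log n$, whereas the proposition puts no rate on $K_n$ beyond Assumption~\ref{ass:K}.

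As a fallback, I would compare $\E m_H$ with $\E m_S$ directly through the cumulant/Lindeberg expansion of the resolvent at fixed $z\in\C^+$. Swapping the entries one at a time, the third-order error per entry is $O(\E|X_{ij}|^3)\,\|(H-z)^{-1}\|^4 = O(D/(n\sqrt{K_n}))$. Summed over the $n^2$ entries and multiplied by the $1/n$ normalization of $m_H$, this gives $O(D/\sqrt{K_n})\to0$ under Assumption~\ref{ass:K} alone.

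A second issue is the mismatch between the real Gaussian $G^n$ and a complex $\chi$. The Lindeberg step needs matched second moments, so if $\E A_{11}^2\ne1$ one must check that the limiting singular value law of $X-\xi$ depends only on $\E|X_{ij}|^2$. I would argue this via the resolvent self-consistent equation: at leading order the second-order terms involve only $\E|X_{ij}|^2$, while the pseudo-variance terms contribute $O(1/n)$ after averaging.
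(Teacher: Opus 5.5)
Your argument is correct, but it does not follow the paper's route. The paper gives no argument of its own: it refers to the discussion in Subsection~10.3 of \cite{rudelson2019sparse}, where, as a step of the sparse circular law, the singular value distribution of the shifted sparse matrix is shown to converge to the same limit as in the Gaussian case under the sole condition $K_n\to\infty$. You instead give a self-contained proof: tightness from $\E\int x^2\,d\nu^{H^n}=1+|\xi|^2$, concentration of $m_H(z)$ from rank-two bounded differences, truncation via Hoffman--Wielandt, and a Lindeberg swap with error $O(D/\sqrt{K_n})$.

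Your approach buys two things. First, it makes the hypotheses transparent: only Assumption~\ref{ass:K} and $\E|A_{11}|^2=1$ are used. You are right that invoking \cite{brailovskaya2024universality} would impose polylogarithmic growth of $K_n$, which this proposition does not need. So the Lindeberg argument should be your main line rather than a fallback.

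Second, it covers complex $\chi$ with arbitrary pseudo-variance $\E A_{11}^2$. The cited result is formulated for real entries, and the paper's $G^n$ is real; your last paragraph is exactly the extra step this requires. To make it rigorous, proceed as follows. Let $\sigma_D^2=\E|A^D_{11}|^2$. First swap to a complex Gaussian matrix with the same variance $\sigma_D^2/n$ and the same pseudo-variance as $X^{n,D}$. Then interpolate the Gaussian laws in the pseudo-variance parameter, and finally rescale by $\sigma_D$. Along the interpolation, the derivative of $\E m$ involves only the pure Wirtinger derivatives $\partial^2_{w_{ij}}$ and $\partial^2_{\bar w_{ij}}$. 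With $R=(H-z)^{-1}$, these yield sums such as $n^{-2}\sum_{i,j}R_{n+j,i}(R^2)_{n+j,i}$, bounded by $n^{-2}\|R\|_F\|R^2\|_F=O(1/(n(\Im z)^3))$.

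The paper's citation is shorter, but your proof is the more complete justification at the generality in which the proposition is stated.
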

\begin{proof}
    Follows directly by the discusson in Subsection 10.3 of \cite{rudelson2019sparse}. 
\end{proof}

Next we prove a classical result for sub-gaussian random variables.
\begin{lem}
Let Assumption \ref{ass:subgaussian} hold. Then for some constant $C'>0$,
\begin{align}\label{asymptotiC_Bound}
    \lim_{n \to \infty} \mathbb{P}\left( \max_{i,j} |X^n_{i,j}| \leq C' \left( \frac{\log n}{K_n} \right)^{1/2} \right) = 1. 
\end{align}
\end{lem}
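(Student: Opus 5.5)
Since $|X^n_{ij}| = K_n^{-1/2} B^n_{ij} |A^n_{ij}| \le K_n^{-1/2}|A^n_{ij}|$, it suffices to control $\max_{i,j}|A^n_{ij}|$ over the $n^2$ entries. The plan is a plain union bound combined with the sub-Gaussian tail of Assumption~\ref{ass:subgaussian}; the Bernoulli mask $B^n$ can only help, so it is simply bounded by one.

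Fix a constant $C'>0$, to be chosen, and set $t_n = C'\sqrt{\log n}$. Then
\[
\mathbb{P}\Bigl( \max_{i,j} |X^n_{ij}| > C'\Bigl(\frac{\log n}{K_n}\Bigr)^{1/2}\Bigr)
\le \mathbb{P}\Bigl(\max_{i,j}|A^n_{ij}| > t_n\Bigr)
\le n^2\, \mathbb{P}(|A^n_{11}| \ge t_n)
\le 2 n^2 \exp(-C C'^2 \log n) = 2 n^{2 - C C'^2},
\]
where $C$ is the sub-Gaussian constant. We choose $C'$ so that $C C'^2 > 2$, for instance $C' = \sqrt{3/C}$. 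The right-hand side is then $2n^{-1}\to 0$, and the complementary probability tends to $1$, which is \eqref{asymptotiC_Bound}.

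There is no real obstacle. The only points to check are that the event $\{B^n_{ij}=0\}$ makes the bound trivially true, and that identical distribution of the entries justifies the union bound with a single tail estimate. No independence between the entries is even needed, and the assumption $K_n\to\infty$ plays no role here; it only makes the bound informative later.
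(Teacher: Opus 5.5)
Your proof is correct and follows the paper's argument exactly. You bound the Bernoulli mask by one and apply a union bound over the $n^2$ entries with the sub-Gaussian tail to get $2n^{2-CC'^2}$, and your explicit choice $C'=\sqrt{3/C}$ just makes concrete the paper's ``choose $C'$ large enough''.
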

\begin{proof}
By the union bound and Assumption \ref{ass:subgaussian} we get
\begin{eqnarray*}
   \mathbb{P}\left( \max_{i,j} |X^n_{i,j}| > C' \left( \frac{\log n}{K_n} \right)^{1/2} \right)
&\le &\mathbb{P}\left( \max_{i,j} \frac{|A^n_{i,j}|}{\sqrt{K_n}} > C' \left( \frac{\log n}{K_n} \right)^{1/2} \right) \\
&=& \mathbb{P}\left( \max_{i,j}|A^n_{i,j}|> C' \left( \log n \right)^{1/2} \right)\quad 
\le \quad  2 n^{2} \exp( - C (C')^2 \log(n))\,. 
\end{eqnarray*}
It remains to choose $C'$ large enough to conclude.
\end{proof}

For the empirical spectral distribution the finiteness of the second moment of the entries of $A^n$ was sufficient. For finer results one needs to make more assumptions for the matrix $A^n$. Recall that $\C^+ = \{ z \in \C \, : \, 
 \Im z > 0 \}$ and that $s_n(M)$ denotes the least singular value of any $n\times n$ matrix $M$.

We now present our comparison result, a corollary of \cite{brailovskaya2024universality}. 

\begin{thm}\label{comparison_with_gaussian_thm} Let Assumptions \ref{ass:K} and \ref{ass:subgaussian} hold. Let $G^n$ be a $n\times n$ matrix with i.i.d. centered real Gaussian entries each with variance $n^{-1}$ and the matrices $H^n(\xi)$ and $S^n(\xi)$ be defined by \eqref{def:hermit}. Let $\xi\in \mathbb{C}$ with $|\xi|>1$. Assume that 
$$
        \lim_{n \to \infty} \frac{\log ^9(n)}{K_n} = 0\,,
$$
Then
\begin{enumerate}[label=(\alph*)]
    \item\label{snX-G} for every $\varepsilon>0$, 
$\quad {\displaystyle 
        \lim_{n \to \infty} \mathbb{P}\left( \left| s_n(X^n - \xi I) - s_n(G^n - \xi I) \right| \geq \varepsilon \right) = 0}\, ,
$

\vspace{0.1cm}

    \item\label{E(X-G)} for every $z \in \C^+$,
    $\quad {\displaystyle 
        \lim_{n \to \infty} \left\| \E(S^n - zI)^{-1} - \E(H^n - zI)^{-1} \right\| = 0}\, .
    $
\end{enumerate}
\end{thm}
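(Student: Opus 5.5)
We will deduce both assertions from the universality theorems of Brailovskaya and van Handel for random matrices of the form $H = A_0 + \sum_{i} Z_i A_i$ with independent centered (bounded or sub-Gaussian) $Z_i$. Their results compare $H$ with its Gaussian model $H_{\mathrm{free}}$-type analogue $G$ having the same covariance structure. The error is controlled by the parameters $\sigma(H)=\|\E (H-\E H)^2\|^{1/2}$, $\sigma_*(H)=\sup_{\|v\|=\|w\|=1}\E|\langle v,(H-\E H)w\rangle|^2)^{1/2}$, and $R(H)=\|\max_i\|Z_iA_i\|\|_\infty$.

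We apply this to the $2n\times 2n$ Hermitian matrix $H^n(\xi)$. Here the deterministic part is $A_0=\begin{bmatrix}0&-\xi I\\ -\bar\xi I&0\end{bmatrix}$ and the random part is $\sum_{i,j}\bigl(X^n_{ij}E_{i,n+j}+\overline{X^n_{ij}}E_{n+j,i}\bigr)$. The Gaussian comparison matrix must match first and second moments. Since $A_{11}$ may be complex with $\E A_{11}^2\neq 0$, the exact Gaussian model differs from the real $G^n$. We therefore first compare to the Gaussian matrix with the same covariance, and then to $S^n$. The second step only changes $\E A_{11}^2$, which is harmless at the level of singular values and resolvents outside the spectrum. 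Alternatively, one can split real and imaginary parts and use the fact that the relevant quantities depend only on $\sigma$ to first order.

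Compute the parameters. We have $\sigma(H^n)\asymp 1$ and $\sigma_*(H^n)\le \max_{ij}(\E|X_{ij}|^2)^{1/2}=n^{-1/2}$. After truncating at level $C'(\log n/K_n)^{1/2}$ by the lemma \eqref{asymptotiC_Bound}, we get $R\lesssim(\log n/K_n)^{1/2}$; the truncation recentering is negligible thanks to sub-Gaussian tails. The Brailovskaya–van Handel spectrum bound then gives, with high probability,
\[
d_{\bs H}\bigl(\sigma(H^n),\sigma(S^n)\bigr)\lesssim \sigma_*(\log n)^{1/2}+R^{1/3}\sigma^{2/3}(\log n)^{2/3}+R\log n ,
\]
and the dominant term is $(\log n/K_n)^{1/6}(\log n)^{2/3}=(\log^9 n/K_n)^{1/6}$. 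This is precisely why the assumption $\log^9 n/K_n\to0$ appears. Since $s_n(X^n-\xi I)$ is the smallest nonnegative eigenvalue of $H^n$ in absolute value, and the spectrum of $H^n$ is symmetric, Hausdorff closeness of the spectra implies $|s_n(X^n-\xi I)-s_n(G^n-\xi I)|\to0$ in probability. This is part \ref{snX-G}. The step we expect to be delicate is matching the exact Gaussian model to the real $G^n$ when $\E A_{11}^2\ne0$. To handle it, we would first use $|\xi|>1$, which gives $s_n(G^n-\xi I)\to|\xi|-1$ for any Gaussian i.i.d. variance-$1/n$ model with either complex structure, by known results. We then compare both to this limit.

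For part \ref{E(X-G)}, we use the resolvent (or spectral statistics) universality in the same paper. It bounds $\|\E(H-z)^{-1}-\E(G-z)^{-1}\|$ by a polynomial in $(\Im z)^{-1}$ times $\sigma_*$, $R^{1/3}\sigma^{2/3}$-type quantities (up to logarithms), which again tend to zero. Alternatively, one can pass through the $\E\,\mathrm{tr}$ and matrix-valued moment comparison $\|\E (H)^{p}-\E(G)^p\|$ for $p\lesssim\log n$ with a Stieltjes/polynomial approximation of $x\mapsto (x-z)^{-1}$ on the (common, bounded with high probability) spectral support. The contributions outside the support are handled by the norm bound $\|H^n\|\le C$ with overwhelming probability, which also follows from their theorem. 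The main obstacle remains the bookkeeping of the truncation and of the real-versus-complex Gaussian comparison.
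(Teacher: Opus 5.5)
Your route is the paper's: Brailovskaya--van Handel spectral universality for the Hermitization, combined with $|d(0,A)-d(0,B)|\le d_{\bs H}(A,B)$, for (a), and their resolvent universality bound for (b).

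\textbf{Truncation and the $\log^9 n$ condition.} The one genuine difference is how unbounded entries are handled.
\begin{itemize}
\item The paper uses the unbounded-entry version (their Theorem 2.8). Its condition $R\ge\sqrt{\kappa\,\overline R}+\sqrt2\,\overline R$, with $\overline R\asymp(\log n/K_n)^{1/2}$, forces $R\asymp(\log n/K_n)^{1/4}$. Keeping $R^{1/3}t^{2/3}$ of order $\varepsilon$ then forces $t\asymp(K_n/\log n)^{1/8}$, and $2ne^{-t}\to0$ is exactly $K_n\gg\log^9 n$.
\item You instead truncate by hand at $C'(\log n/K_n)^{1/2}$ and use the bounded-entry theorem with $t\asymp\log n$. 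That is legitimate here. On the event of the truncation lemma, $H^n$ differs from the recentred truncated matrix by a block built from a multiple of the all-ones matrix, of operator norm at most $\sqrt{K_n}\,\E|A_{11}|\1_{\{|A_{11}|>C'\sqrt{\log n}\}}=O(n^{-c})$ for $C'$ large. The variance loss is equally negligible.
\end{itemize}
However, your arithmetic is off: $(\log n/K_n)^{1/6}(\log n)^{2/3}=(\log^5 n/K_n)^{1/6}$, not $(\log^9 n/K_n)^{1/6}$. So you misidentify where the $\log^9$ comes from. The slip is harmless, and in fact your variant only needs $K_n\gg\log^5 n$, a small improvement over the paper.

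\textbf{Real versus complex Gaussian model.} You are right that the Brailovskaya--van Handel Gaussian model has pseudo-variance $\E A_{11}^2/n$. It therefore coincides with the real $G^n$ only when $\E A_{11}^2=1$; the paper applies the theorem against $G^n$ without comment.

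Your fix for (a), however, rests on a false claim: $s_n(G^n-\xi I)$ does not tend to $|\xi|-1$. Since $G^n$ is far from normal, its least singular value is not the distance from $\xi$ to the unit disk. It tends to the lower edge of the support of $\mu^\xi$ from Theorem~\ref{thm_gaussians_properties}, namely $\sqrt{\lambda_-}$ with $\lambda_-=(\alpha-3)^3/(8(\alpha-1))$ and $\alpha=\sqrt{1+8|\xi|^2}$. This is about $0.74$ for $|\xi|=2$ and about $|\xi|-\sqrt2$ for large $|\xi|$.

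The common-limit idea survives with this edge, because $\mu^\xi$ does not depend on the pseudo-variance. But you then need a no-outlier statement for both Gaussian models, i.e.\ no singular values of $G-\xi I$ below the edge, and you have not supplied one.

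For (b), ``harmless'' is only asserted. A Gaussian interpolation in the pseudo-variance settles it. Write $R=(S-z)^{-1}$ for the Hermitization of a Gaussian $W$ with fixed variance $1/n$ and varying pseudo-variance. Then $\sum_{i,j}\partial^2_{W_{ij}}R=2R\Pi R$, where $\Pi$ has the transpose of the lower-left block of $R$ as its upper-right block, so $\|\Pi\|\le\|R\|\le(\Im z)^{-1}$. The interpolation derivative is $(2n)^{-1}\sum_{i,j}\E\bigl(\tau'\partial^2_{W_{ij}}R+\overline{\tau'}\partial^2_{\overline{W_{ij}}}R\bigr)=O\bigl(n^{-1}(\Im z)^{-3}\bigr)$. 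This gives an $O\bigl(n^{-1}(\Im z)^{-3}\bigr)$ difference between the two expected resolvents.
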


\begin{rem}
    In the previous theorem, it turns out that assumption $\log^9(n)/K_n\to 0$ is required to prove item \ref{snX-G}. The lighter assumption $\log(n)/K_n\to 0$ is sufficient to establish item \ref{E(X-G)}, see the proof below. 
\end{rem}

\begin{proof} 
As a consequence of Assumption~\ref{ass:subgaussian}, the following estimate holds 
\begin{equation}\label{est:subgaussian}    
\E \max_{i,j \in [n]} |A^n_{ij}|^2 \leq C_1 \log n\, ,
\end{equation}
for some constant $C > 0$ (see, for instance, \cite[Exercises 2.26 and 2.44]{vershynin2018high}). 
Moreover, the singular value $s_n(G^n - \xi I)$ is positive with probability one and coincides
on this probability one set with the smallest positive eigenvalue of $S_n$.

In order to establish \ref{snX-G} we shall rely on \cite[Theorem 2.8]{brailovskaya2024universality}. Notice first that 
$$
|s_n(X^n-\xi I) - s_n(G^n-\xi I)| \le d_{\boldsymbol{H}}(\sigma(H^n), \sigma(S^n))\ ,
$$
where $\sigma(H^n)$ and $\sigma(S^n)$ are respectively the spectra of $H^n$ and $S^n$. The following quantities whose estimates are straightforward appear in the statement of \cite[Theorem 2.8]{brailovskaya2024universality}:
\begin{eqnarray*}
    \kappa&=& \left\| \mathbb{E} (H - \mathbb{E} H)^2\right\|^2 \ =\ 1\ ,\\
    \kappa_*&=& \sup_{\|v\|, \|w\|=1} \left(\mathbb{E} \left| \langle v, (H-\mathbb{E} H)w\rangle \right|^2 \right)^{1/2}\ \le \ \frac 2{\sqrt{n}} \ ,\\
    \overline{R} &=& \left( \mathbb{E} \max_{ij} |X_{ij}^n|^2 \right)^{1/2}\ \le \ \left( \frac {C_1\log(n)}{K_n}\right)^{1/2}\, .
\end{eqnarray*}
Now the theorem states that:
$$
\mathbb{P} \left\{ \left| s_n(G^n- \xi I) - s_n(X_n- \xi I)\right| \ge C_0\varepsilon (t)\,;\ \max_{i,j} |X_{ij}|\le R \right\} \le 2n \exp\left(-t\right)\ ,
$$ 
for every $t\ge 0$ with the conditions:
$$
(i)\quad R\ge \sqrt{\kappa \,\overline{R}} + \sqrt{2}\, \overline{R}\qquad \textrm{and}\qquad (ii) \quad 
\varepsilon_R(t) = \kappa_* \sqrt{t} + R^{1/3} \kappa^{2/3} t^{2/3} + Rt\, , 
$$
and where $C_0$ is a universal constant. 

We first set $R=C_2\left( \frac{\log(n)}{K_n}\right)^{1/4}$ and notice that for this value, $\mathbb{P}\left\{ \max_{i,j}|X_{ij}|\le R\right\}\to_n 1$. In fact, using estimate \eqref{est:subgaussian} we have
$$
\mathbb{P}\left\{ \max_{i,j}|X_{ij}|>  R\right\} \ \le \ \frac{C_1}{C_2^2}\sqrt{\frac{\log(n)}{K_n}}\quad \xrightarrow[n\to\infty]{}\quad 0\ ,
$$
by assumption. Now setting 
$
t= C_3 \left( \frac{K_n}{\log(n)}\right)^{1/8}
$, we get 
$$
\varepsilon_R(t) = C_2^{1/3} C_3^{2/3} +o(1)\, .
$$
Notice that with such a choice, 
$$
2n e^{-t} = 2\exp{\small \left\{ \log(n) - C_3\left(\frac{K_n}{\log(n)}\right)^{1/8}\right\}} = 
2\exp{\small \left\{ \log(n) \left[ 1 - C_3\left(\frac{K_n}{\log^9(n)}\right)^{1/8}\ \right]\right\}}\
\xrightarrow[n\to\infty]{}\ 0
$$
by the condition $K_n/\log^9(n)\to \infty$.
It remains to choose $C_3$ so that $C_2^{1/3}C_3^{2/3} =\varepsilon$ to conclude that 
\begin{multline*}
\mathbb{P} \left\{ \left| s_n(G^n- \xi I) - s_n(X_n- \xi I)\right| \ge 2C_0\varepsilon \right\}\\
\le \mathbb{P} \left\{ \left| s_n(G^n- \xi I) - s_n(X_n- \xi I)\right| \ge C_0\varepsilon (t)\,;\ \max_{i,j} |X_{ij}|\le R \right\} + 
\mathbb{P} \left\{ \max_{i,j} |X_{ij}|> R \right\}\quad \xrightarrow[n\to\infty]{}\quad 0\,.
\end{multline*}

In order to establish \ref{E(X-G)} we shall rely on \cite[Theorem 2.11]{brailovskaya2024universality} which yields that for every $z\in \mathbb{C}^+$
$$
\left\|  
\mathbb{E}(zI - H)^{-1} - \mathbb{E}(zI-S)^{-1}
\right\|\ \le \ \frac{\kappa^* +\overline{R}^{1/10}}{\Im^2(z)} \ = \ \frac 1{\Im^2(z)} \left( \frac 2{\sqrt{n}} + \left( \frac{\log(n)}{K_n}\right)^{1/20}\right)\, .
$$
Proof of Theorem \ref{comparison_with_gaussian_thm} is completed.

\end{proof}

\begin{rem}
    The results of \cite{brailovskaya2024universality} are fairly general. One may relax the sub-Gaussian assumption (Assumption \ref{ass:subgaussian}) at the cost of increasing the sparsity parameter $K_n$ and still have an analogue of Theorem \ref{comparison_with_gaussian_thm}.
\end{rem}

We now prove a concentration result. Recall that $X^n$'s entries write $X^n_{ij} =\frac {B_{ij}^n A_{ij}^n}{\sqrt{K_n}}$.

\begin{lem}\label{lem_conc_bilinear}
Assume that $\mathbb{E}|A_{11}^n|^8<\infty$. Let $z \in \C^+$, $\varepsilon>0$ and 
consider two sequences $(\tilde w^{2n})$ and $(\tilde q^{2n})$ of unit vectors in $\C^{2n}$, where 
$$
\tilde w^{2n}_i=\tilde q^{2n}_i=0\quad \textrm{for}\quad i\in \{n+1,\cdots, 2n\}\, .
$$
Then
$$
    \lim_{n \to \infty} \mathbb{P} \left( \left| \langle (H^n(\xi) - zI)^{-1}\tilde w^{2n},\tilde q^{2n}\rangle - \E  \langle (H^n(\xi) - zI)^{-1}\tilde w^{2n},\tilde q^{2n}\rangle \right| \geq \epsilon \right) = 0\ .
$$
\end{lem}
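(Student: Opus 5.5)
Write $\Phi_n = \langle (H^n(\xi) - zI)^{-1}\tilde w,\tilde q\rangle$, dropping $n$ where harmless. The plan is to prove $\Var(\Phi_n)=\E|\Phi_n-\E\Phi_n|^2\to 0$ and conclude by Chebyshev's inequality.

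\textbf{Martingale decomposition.} Let $\mathcal F_k$ be the $\sigma$-field generated by the first $k$ rows of $(B^n,A^n)$, i.e.\ by the rows $X_{1\cdot},\dots,X_{k\cdot}$ of $X^n$. Set $\E_k=\E[\,\cdot\,|\mathcal F_k]$. Then
\[
\Phi_n-\E\Phi_n=\sum_{k=1}^n(\E_k-\E_{k-1})\Phi_n
\qquad\text{and}\qquad
\Var(\Phi_n)=\sum_{k=1}^n\E|(\E_k-\E_{k-1})\Phi_n|^2 .
\]
Let $H_{(k)}$ be $H^n$ with row $k$ of $X^n$ set to zero. Because $H$ is Hermitian, this zeroes row $k$ of the upper-right block and the matching column $n+k$ of the lower-left block. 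Put $R=(H-zI)^{-1}$ and $R_{(k)}=(H_{(k)}-zI)^{-1}$. Since $\langle R_{(k)}\tilde w,\tilde q\rangle$ does not depend on row $k$, it is killed by $\E_k-\E_{k-1}$. Therefore
\[
\E|(\E_k-\E_{k-1})\Phi_n|^2\le 4\,\E|\langle (R-R_{(k)})\tilde w,\tilde q\rangle|^2 .
\]

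\textbf{Rank-two perturbation.} We have $H-H_{(k)}=e_k x_k^\star+x_k e_k^\star$, where $x_k\in\C^{2n}$ carries the conjugated $k$-th row of $X$ in coordinates $n+1,\dots,2n$. This is a rank-two perturbation, so the resolvent identity gives
\[
R-R_{(k)}=-R_{(k)}\,(e_k x_k^\star+x_k e_k^\star)\,R .
\]
Hence $|\langle (R-R_{(k)})\tilde w,\tilde q\rangle|$ is bounded by terms of the form
\[
|\langle R_{(k)}^\star\tilde q,e_k\rangle|\,|\langle x_k,R\tilde w\rangle|
\qquad\text{and}\qquad
|\langle R_{(k)}^\star\tilde q,x_k\rangle|\,|\langle e_k,R\tilde w\rangle| .
\]
Each factor $\langle\cdot,x_k\rangle$ is bounded by $\|x_k\|/\Im z$.

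The key gain is the $e_k$-factor, which picks the $k$-th coordinate of a vector of norm at most $1/\Im z$. Summing over $k$, $\sum_k|\langle e_k,R\tilde w\rangle|^2\le \|R\tilde w\|^2\le (\Im z)^{-2}$, and the same holds for $R_{(k)}^\star\tilde q$.

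\textbf{Main obstacle.} The difficulty is that $R$ and $R_{(k)}$ are coupled with $x_k$, so we cannot simply factor the expectation. I would handle this as follows.

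\begin{itemize}
\item Use the Sherman--Morrison form to express $R$ through $R_{(k)}$ times bounded scalar factors. For Hermitian rank-two updates with $z\in\C^+$, the relevant $2\times2$ Schur complement has imaginary part bounded below. This makes the factors of order $|z|/\Im z$ after using $\langle e_k,R_{(k)}e_k\rangle$ and the structure where $x_k$ and $e_k$ live in different blocks.
\item Bound the remaining quadratic expressions in $x_k$ conditionally on $\mathcal F_{(k)}$, using independence of $x_k$ from $R_{(k)}$. For instance,
\[
\E|\langle x_k,R_{(k)}v\rangle|^2 \le \frac{C}{n}\|v\|^2 ,
\]
since the entries of $X$ have variance $1/n$. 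Squared-norm terms like $\|x_k\|^2$ have bounded moments: $\E\|x_k\|^4\le C$, and this uses only the fourth moment.
\item Apply Cauchy--Schwarz where needed. This is where $\E|A|^8<\infty$ enters: it gives products of fourth moments of $\|x_k\|^2$ and of the bilinear forms. Note $\E|X_{ij}|^{2p}=K_n^{1-p}n^{-1}\E|A|^{2p}$, so sparsity only helps.
\end{itemize}

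\textbf{Conclusion.} The net per-row bound will be of the form
\[
C_z\Big(\E|\langle e_k,R_{(k)}^\star\tilde q\rangle|^4\Big)^{1/2}+\frac{C_z}{n},
\]
or a variant of it. By the coordinate-summation remark above, and since these coordinates are bounded by $1/\Im z$, summing over $k$ gives $o(1)$. The precise check that $\sum_k\E|\langle e_k,R_{(k)}^\star \tilde q\rangle|^2$ stays bounded despite $R_{(k)}$ depending on $k$ can be done by comparing $R_{(k)}$ back to $R$ via the same identity. If this route fails, I would fall back on the Efron--Stein inequality, which yields the same per-row quantities. Chebyshev's inequality then concludes the proof.
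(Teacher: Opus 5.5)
There is a genuine gap, and it lies in your first reduction, not in the moment bookkeeping. The inequality $\E|(\E_k-\E_{k-1})\Phi_n|^2\le 4\,\E|\langle (R-R_{(k)})\tilde w,\tilde q\rangle|^2$ is true, but it discards exactly the cancellation that makes the martingale increments small. Deleting the random part of row $k$ changes $\Phi_n$ by an amount of order one, so no bound on $\E|\langle (R-R_{(k)})\tilde w,\tilde q\rangle|^2$, however refined, can sum to $o(1)$. To see this, set $a_k=\tilde q^\star R_{(k)}e_k$, $b_k=e_k^\star R_{(k)}\tilde w$, $r_k=(R_{(k)})_{kk}$ and $\tau_k=x_k^\star R_{(k)}x_k$. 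Woodbury with $U=[e_k\ x_k]$ gives (up to conjugation)
\[
\tilde q^\star (R-R_{(k)})\tilde w=-\frac{a_k\,b_k\,\tau_k}{r_k\tau_k-1}+\bigl(\text{terms containing } \tilde q^\star R_{(k)}x_k,\ x_k^\star R_{(k)}\tilde w \text{ or } e_k^\star R_{(k)}x_k\bigr).
\]
Here $\tau_k$ concentrates around $\bar\tau_k=\frac1n\sum_j (R_{(k)})_{n+j,n+j}$, which is of order one, not small. Take $\tilde q=\tilde w=e_1$. Then $a_1=b_1=r_1$, and $|r_1|\ge(|z|+|\xi|^2/\Im z)^{-1}$ because in $H_{(1)}$ coordinate $1$ is coupled only to coordinate $n+1$. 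So the single term $k=1$ is already of order one. Consistently, your announced per-row bound $C_z(\E|a_k|^4)^{1/2}+C_z/n$ does not sum to $o(1)$: the $C_z/n$ part alone sums to $C_z$, and $(\E|a_k|^4)^{1/2}\ge\E|a_k|^2$. The coordinate-summation remark gives only boundedness, which is trivial since $|\Phi_n|\le 1/\Im z$, so Chebyshev yields nothing.

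What is missing is a centering step. Since $\E_k-\E_{k-1}$ kills every function of the other rows, you should subtract a sharper row-$k$-independent proxy than $\langle R_{(k)}\tilde w,\tilde q\rangle$: the one in which $\tau_k$ is replaced by $\bar\tau_k$ in the leading term. The denominators are harmless: for $\Im\tau\ge 0$ one has $|r_k\tau-1|\ge \Im r_k/|r_k|\ge \Im z\,|r_k|$, which is bounded below as above. What survives is the fluctuation $\tau_k-\bar\tau_k$, whose conditional variance is $O(\E|A_{11}|^4/K_n+1/n)$ because $\E|X_{ij}|^4=\E|A_{11}|^4/(nK_n)$; this is where the decay comes from. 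Besides it, only terms with centered linear forms in $x_k$ remain, of conditional variance $O(1/n)$. Up to moment bookkeeping, the per-row contribution becomes $O\bigl((\E|a_k|^2+\E|b_k|^2)(K_n^{-1}+n^{-1})+n^{-2}\bigr)$. This sums to $O(1/K_n+1/n)$ once $\sum_k\E|a_k|^2$ and $\sum_k\E|b_k|^2$ are bounded. Your idea of comparing $R_{(k)}$ back to $R$ does give that boundedness; for instance, $e_k^\star(H-zI)R=e_k^\star$ yields $x_k^\star R\tilde w=\tilde w_k+z(R\tilde w)_k+\xi(R\tilde w)_{n+k}$. This plays the role of the step for which the paper invokes an external bilinear-form estimate. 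The paper itself captures the cancellation differently. After a Schur complement reduction to $\langle w,Qq\rangle$, it applies Efron--Stein with a \emph{resampled} column $\check y_i$ of $X-\xi I$, inserts the conditional expectations $\E_i(\cdot)$ as auxiliary terms, and then uses the variance bound for quadratic forms. This also shows that your fallback is misstated: Efron--Stein does not produce ``the same per-row quantities'' as deletion. Its terms $\E|f-\check f_i|^2$ are small precisely because the order-one parts of $f$ and $\check f_i$ cancel, whereas the deletion differences remain of order one.
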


\begin{rem} In the proof below, the condition $\mathbb{E}|A_{11}|^4<\infty$ appears in estimating the variance of a quadratic form, see for instance \eqref{eq:fact-2}. The eight moment is required when relying on \cite[Theorem 3.6]{HLNV08}. 
\end{rem}

\begin{proof} We write 
$$\tilde w^{2n}=\begin{pmatrix}
    w^n\\0_n
\end{pmatrix}\quad \textrm{and}\quad \tilde q^{2n}=\begin{pmatrix}
    q^n\\0_n
\end{pmatrix}\ ,
$$ where $w^n,q^n\in \mathbb{C}^n$ and $0_n$ is the null vector in $\mathbb{C}^n$. We will soon drop the index $n$ and simply write $w,q,I$ instead of $w^n,q^n,I_n$. In the sequel, $C$ denotes a constant whose value may change from line to line.

By the Schur complement formula, we have
$$
\langle (H^n(\xi) - zI_{2n})^{-1}\tilde w^{2n},\tilde q^{2n}\rangle = z\,\langle w^n ,\left( -z^2 I_n + (X-\xi I_n)(X-\xi I_n)^\star\right)^{-1} q^n\rangle\ ,
$$
and we are led to study the concentration of the quadratic form $\langle w, Q\, q\rangle$ with 
$$
Q= z\left( -z^2 I + (X-\xi I)(X-\xi I)^\star\right)^{-1}\, .
$$
Notice that $Q$ being the top-left corner of matrix $(H^n-zI)^{-1}$, we immediately get $\| Q\| \le (\Im(z))^{-1}$. Denote by 
$$
Y=X-\xi I 
$$
and let the $(y_i)$'s being the columns of matrix $Y$. In particular, $y_i=x_i- \xi e_i$ and
$$
Q=z\left( -z^2 + Y Y^\star\right)^{-1} = z \left( -z^2 + \sum_{k=1}^n y_k y_k^\star\right)^{-1} \, .
$$
For further use, we introduce $Q^i= z \left( -z^2 + \sum_{k\neq i} y_k y_k^\star\right)^{-1} $.
Denote by 
$$
f(y_1,\cdots, y_n) = \langle w, Q\, q\rangle \, .
$$
Let $\check f_i$ be the function $f$ evaluated at $(y_1,\cdots, y_{i-1}\,,\, {\check y_i}\,,\,y_{i+1},\cdots, y_n)$ where $\check y_i$ is an independent copy of $y_i$.
By Efron-Stein's inequality \cite[Theorem 3.1]{boucheron2003concentration} we have
$$
\mathrm{var}(f) \le \frac 12 \sum_{i=1}^n \mathbb{E} |f - \check f_i|^2\, .
$$
We will rely on the following elementary facts. Let $M\in \mathbb{C}^{n\times n}$ a deterministic matrix, then 
\begin{eqnarray}
\mathbb{E} (y_i^\star M y_i) &=& \frac 1n \mathrm{Trace} (M) + |\xi|^2 M_{ii} \label{eq:fact-1}\ ,\\
\mathrm{var}(y_i^\star M y_i) &\le & C\left( \frac{\mathbb{E}|A_{11}|^4}{nK_n}\mathrm{Trace}(M M^\star) + \frac{|\xi|^2 (M M^\star)_{ii}}n\right)\ .
\label{eq:fact-2}
\end{eqnarray}
The function $z\mapsto y_i^\star Q^i(z) y_i$ is the Stieltjes transform of a non-negative measure, and the function 
$$
z\quad \mapsto\quad  - \frac 1{z+ y_i^\star Q^i(z) y_i}\, . 
$$
is the Stieltjes transform of a probability measure. 
In particular 
$$
\left| 
\frac 1{z+ y_i^\star Q^i(z) y_i}
\right| \quad \le \quad \frac 1{\Im(z)}\, .
$$
In the sequel, we denote by $\mathbb{E}_i=\mathbb{E}\left( \ \cdot\ \mid y_k,\, k\neq i\right)$ and by $\mathrm{var}_i$ the associated conditional variance. 
Using Sherman-Morrisson's inequality, we get
\begin{eqnarray*}
    \mathbb{E} \left| f - \check f_i\right|^2 &= &  \mathbb{E} \mathbb{E}_i \left| 
    \frac{y_i^\star Q^i q w^\star Q^i y_i}{z + y_i^\star Q^i y_i} - \frac{\ {\check y_i}^\star Q^i q w^\star Q^i \check y_i }{z + {\check y}_i^\star Q^i y_i} 
    \right|^2 \ ,\nonumber\\
     &\stackrel{(a)}\le  &  2 \mathbb{E} \mathbb{E}_i \left| 
    \frac{y_i^\star Q^i q w^\star Q^i y_i}{z + y_i^\star Q^i y_i} - \frac{\mathbb{E}_i\left( y_i^\star Q^i q w^\star Q^i y_i\right) }{z + \mathbb{E}_i (y_i^\star Q^i y_i)} 
    \right|^2 + 2 \mathbb{E} \mathbb{E}_i \left| 
    \frac{{\check y_i}^\star Q^i q w^\star Q^i \check y_i}{z + \check y_i^\star Q^i \check y_i} - \frac{\mathbb{E}_i\left( \check y_i^\star Q^i q w^\star Q^i \check y_i\right) }{z + \mathbb{E}_i (\check y_i^\star Q^i \check y_i)} 
    \right|^2 \ ,\nonumber\\
&= & 4 \mathbb{E} \mathbb{E}_i \left| 
    \frac{y_i^\star Q^i q w^\star Q^i y_i}{z + y_i^\star Q^i y_i} - \frac{\mathbb{E}_i\left( y_i^\star Q^i q w^\star Q^i y_i\right) }{z + \mathbb{E}_i (y_i^\star Q^i y_i)} 
    \right|^2\ ,
    \end{eqnarray*}
where $(a)$ follows from the introduction of the auxiliary term 
$$
\frac{\mathbb{E}_i\left( y_i^\star Q^i q w^\star Q^i y_i\right) }{z + \mathbb{E}_i (y_i^\star Q^i y_i)}
= \frac{\mathbb{E}_i\left( \check y_i^\star Q^i q w^\star Q^i \check y_i\right) }{z + \mathbb{E}_i (\check y_i^\star Q^i \check y_i)} \ ,
$$
and the elementary inequality $|a+b|^2\le 2|a|^2+ 2|b|^2$. Introducing appropriate auxiliary terms and proceeding similarly, we get
\begin{eqnarray}
    \mathbb{E} \left| f - \check f_i\right|^2 &\le & 8 \mathbb{E} \mathbb{E}_i \left| 
    \frac{y_i^\star Q^i q w^\star Q^i y_i}{z + y_i^\star Q^i y_i} - \frac{\mathbb{E}_i\left( y_i^\star Q^i q w^\star Q^i y_i\right) }{z + y_i^\star Q^i y_i}
    \right|^2 \nonumber \\
    &&\quad + 8 \mathbb{E} \mathbb{E}_i \left| \mathbb{E}_i\left( y_i^\star Q^i q w^\star Q^i y_i\right) \left\{ 
    \frac{1}{z + y_i^\star Q^i y_i} - \frac{1}{z + \mathbb{E}_i (y_i^\star Q^i y_i)} \right\}
    \right|^2 \ ,\nonumber \\
&\le& \frac{8}{\Im^2(z)} \mathbb{E} \mathrm{var}_i (y_i^\star Q^i q w^\star Q^i y_i) + \frac 8{\Im^4(z)} \mathbb{E} \left| 
\mathbb{E}_i(y_i^\star Q^i q w^\star Q^i y_i) \left( y_i^\star Q^i y_i - \mathbb{E}_i (y_i^\star Q^i y_i)\right)
\right|^2 \, ,\nonumber \\
&=& {\mathcal O}_z \left( \mathbb{E} \mathrm{var}_i (y_i^\star Q^i q w^\star Q^i y_i)\right) + {\mathcal O}_z \left( \mathbb{E} \left| 
\mathbb{E}_i(y_i^\star Q^i q w^\star Q^i y_i) \left( y_i^\star Q^i y_i - \mathbb{E}_i (y_i^\star Q^i y_i)\right)
\right|^2\right)\, .
\label{eq:variance-estimate}
    \end{eqnarray}
We first estimate $\mathrm{var}_i (y_i^\star Q^i q w\, Q^i y_i)$. By \eqref{eq:fact-2} we have
\begin{eqnarray}
\mathrm{var}_i ( y_i^\star Q^i q\, w^\star Q^i y_i) &\le& C \left( \frac{\mathrm{Trace}( Q^i q\, w^\star Q^i [ Q^i]^\star w\, q^\star [Q^i]^\star)}{nK_n} 
+ \frac{|\xi|^2 \left( Q^i q\, w^\star Q^i [ Q^i]^\star w\, q^\star [Q^i]^\star \right)_{ii}}{n}\right)\ ,\nonumber \\
&=& {\mathcal O}_z \left( 
\frac{1}{nK_n}\right) 
+ {\mathcal O}_{z,\xi} \left( \frac{\left( Q^i q\, q^\star [Q^i]^\star \right)_{ii}}{n}
\right)\, . \label{eq:estimate-variance-1}
\end{eqnarray}
We now estimate $\mathbb{E}_i(y_i^\star Q^i q w^\star Q^i y_i)$. By \eqref{eq:fact-1} we have
\begin{eqnarray*}
 \mathbb{E}_i(y_i^\star Q^i q w^\star Q^i y_i) &=&   \frac 1n \mathrm{Trace} (Q^i q w^\star Q^i) + |\xi|^2 \left( Q^i q w^\star Q^i\right)_{ii}
 ={\mathcal O}_z \left( \frac 1n \right) + |\xi|^2 \left( Q^i q w^\star Q^i\right)_{ii}\, .
\end{eqnarray*}
Notice that 
\begin{eqnarray*}
\left| \left( Q^i q w^\star Q^i\right)_{ii}\right|^2 &=&  \left( Q^i q w^\star Q^i\right)_{ii}\times \left( [Q^i]^\star w q^\star [Q^i]^\star\right)_{ii}\\
&\le& \ \left( Q^i q w^\star Q^i [Q^i]^\star w q^\star [Q^i]^\star\right)_{ii}\quad \le\quad  \frac{1}{\Im^2(z)} \left( Q^i q\,q^\star [Q^i]^\star\right)_{ii}\, .
\end{eqnarray*}
Hence 
\begin{equation}
\left| \mathbb{E}_i(y_i^\star Q^i q w^\star Q^i y_i)\right|^2  = {\mathcal O}_z\left( \frac 1{n^2}\right) + {\mathcal O}_{z,\xi}\left( \left| \left( Q^i q w^\star Q^i\right)_{ii}\right|^2\right)= {\mathcal O}_{z}\left( \frac 1{n^2}\right) + {\mathcal O}_{z,\xi}\left( 
 \left( Q^i q\,q^\star [Q^i]^\star\right)_{ii} \right)\ .
\label{eq:estimate-variance-2}
\end{equation}

We finally estimate $\mathrm{var}_i (y_i^\star Q^i y_i)$. By \eqref{eq:fact-2} we have
\begin{equation}
\mathrm{var}_i ( y_i^\star Q^i y_i) \le  \widetilde K\left( \frac{\mathrm{Trace}( Q^i  [ Q^i]^\star )}{nK_n} 
+ \frac{|\xi|^2 \left[ Q^i [Q^i]^\star \right]_{ii}}{n}\right) = {\mathcal O}_z \left( \frac 1{K_n}\right) +{\mathcal O}_{z,\xi}\left( 
\frac{1}{n}\right) =
 {\mathcal O}_{z,\xi}\left( \frac{1}{K_n}\right)\, .
\label{eq:estimate-variance-3}
\end{equation}
Notice that the final upper estimate of $\mathrm{var}_i ( y_i^\star Q^i y_i)$ above is deterministic. Noticing that 
$$\mathbb{E}_i \left\{
|\mathbb{E}_i U|^2\times |V|^2\right\}= |\mathbb{E}_i U|^2 \mathbb{E}_i |V|^2\ ,
$$
and using \eqref{eq:estimate-variance-2}-\eqref{eq:estimate-variance-3}, we get
\begin{eqnarray}
    \mathbb{E}\mathbb{E}_i\left| 
\mathbb{E}_i(y_i^\star Q^i q w^\star Q^i y_i) \left( y_i^\star Q^i y_i - \mathbb{E}_i (y_i^\star Q^i y_i)\right)
\right|^2
&=&
\mathbb{E} \left\{ \left| 
\mathbb{E}_i(y_i^\star Q^i q w^\star Q^i y_i)\right|^2 \mathbb{E}_i\left| y_i^\star Q^i y_i - \mathbb{E}_i (y_i^\star Q^i y_i)
\right|^2\right\}\nonumber\,,\\
&=& \mathbb{E} \left\{ \left| 
\mathbb{E}_i(y_i^\star Q^i q w^\star Q^i y_i)\right|^2 \mathrm{var}_i ( y_i^\star Q^i y_i)\right\}\,,\nonumber \\
&=& {\mathcal O}_{z,\xi} \left( \frac{1}{n^2K_n}\right) + {\mathcal O}_{z,\xi} \left( \frac{\mathbb{E}\left( Q^i q\,q^\star [Q^i]^\star\right)_{ii}}{K_n}\right) \, .
\label{eq:estimate-variance-4}
\end{eqnarray}
Plugging back estimates \eqref{eq:estimate-variance-1} and \eqref{eq:estimate-variance-4} into \eqref{eq:variance-estimate} and summing over $i$ finally yields
$$
    \mathrm{var} (f) \ \le \ \frac 12 \sum_i \mathbb{E}|f- \check f_i|^2\ =\  {\mathcal O}_{\xi,z} \left( \frac 1{K_n} +\frac{\sum_i \mathbb{E}(Q^i q\, q^\star [Q^i]^\star)_{ii}}{K_n}\right)\, .
$$
It remains to notice that 
$$
\sum_i \mathbb{E}\left( Q^i q\, q^\star [Q^i]^\star \right)_{ii} ={\mathcal O}_z(1)
$$
by \cite[Theorem 3.6]{HLNV08} to conclude.

\end{proof}

We now present fairly standard results concerning the Gaussian matrix $S^n$.
\begin{thm}\label{thm_gaussians_properties}
    Let $\xi\in \mathbb{C}$ with $|\xi|>1$. Let $G^n$ be a $n\times n$ matrix with i.i.d. real Gaussian entries each with variance $n^{-1}$ and $S^n(\xi)$ the $2n\times 2n$ matrix defined by \eqref{def:hermit}.
    
    The following facts hold true for matrix $S^n(\xi)$.
    \begin{enumerate}[label=(\alph*)]
        \item\label{thm_gaussians_e.s.d.} There exists a probability measure $\mu^{\xi}$ such that 
        \begin{align*}
      \frac{1}{2n} \sum_{i \in [2n]}\delta_{\lambda_i(S^n)} 
\quad \underset{n \to \infty}{\Longrightarrow}\quad \mu^{\xi} \qquad  \text{a.s.}
        \end{align*}

        \item\label{thm_gaussians_support_density} The probability measure $\mu^{\xi}$ is symmetric and has a density supported in $(-C_\xi,-c_\xi)\cup(c_\xi,C_\xi)$ for some positive constants $0<c_\xi<C_\xi$. 
        \item\label{thm_gaussians_fixed_point_stieltjes}  If $m^{\xi}$ denotes the Stieltjes transform of $\mu^{\xi}$, then $m^{\xi}$ is the unique function that satisfies the following fixed point equation 
\begin{align}
	\label{eq:scmde}
	-\frac{1}{{m}^{\xi}(w)}=w+{m}^{\xi}(w)-\frac{|\xi|^2}{w+{m}^\xi(w)}, \quad \mbox{with}\quad \Im(m^{\xi}(w))\,,\ \Im(w)>0,
\end{align}
\item\label{thm_gaussian_least_singular_value} There exists a positive constant $\check c_\xi$ such that 
\begin{align*}
 \lim_{n \to \infty}\mathbb{P}(s_n(G^n-\xi I)\geq \check c_\xi)\ =\ 1\,.
\end{align*}
\end{enumerate}
Let $\tilde w^{2n}, \tilde q^{2n}$ be two deterministic unit vectors in $\C^{2n}$ satisfying
 $
 \tilde w^{2n}_i=\tilde q_i^{2n} =0$ for $i\ge n+1$.
\begin{enumerate}[label=(\alph*),resume]
\item\label{thm_bilinear_gaussians} Let $\eta \in \R^+$, then
\begin{align*}
   \lim_{n \to \infty} \left|\left\langle \tilde w^{2n},(S^n-i\eta I)^{-1}\tilde q^{2n}\right\rangle  -  m^\xi (i\eta) \langle \tilde w^{2n},\tilde q^{2n}\rangle\right|=0 \quad \text{a.s.}\,.
\end{align*}
 \item \label{item:concentration} Let $z \in \C^+$, then for every $\varepsilon>0$, 
    $$
        \lim_{n \to \infty} \mathbb{P}\left( \left| \left\langle\tilde w^{2n},(S^n - zI)^{-1}\tilde q^{2n} \right\rangle - \left\langle \tilde w^{2n},\E(S^n - zI)^{-1}\tilde q^{2n} \right\rangle \right| \geq \varepsilon \right) = 0\,.
    $$
    \end{enumerate}
\end{thm}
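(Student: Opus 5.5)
We treat the six items in two groups. Items \ref{thm_gaussians_e.s.d.}--\ref{thm_gaussians_fixed_point_stieltjes} and \ref{thm_gaussian_least_singular_value} concern the limiting singular-value distribution of the shifted Ginibre-type matrix $G^n-\xi I$. Items \ref{thm_bilinear_gaussians}--\ref{item:concentration} are isotropic statements about the resolvent of $S^n$.

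For \ref{thm_gaussians_e.s.d.} and \ref{thm_gaussians_fixed_point_stieltjes}, we invoke the classical Girko Hermitization analysis, as in Bai--Silverstein or Dozier--Silverstein for information-plus-noise matrices. The empirical singular-value distribution of $G^n-\xi I$ converges almost surely, since $G^n$ is an i.i.d.\ matrix with a rank-free deterministic shift. The symmetrized limit $\mu^\xi$ has a Stieltjes transform characterized by the quadratic vector equation. This equation reduces to \eqref{eq:scmde} by the usual Schur-complement computation on $S^n-wI$. The two diagonal blocks of the resolvent have equal normalized traces, so a single scalar $m^\xi$ solves the system, and uniqueness in the class $\Im m^\xi>0$ is standard.

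For \ref{thm_gaussians_support_density}, symmetry is immediate from the block structure. The support statement comes from analysing the cubic equation obtained from \eqref{eq:scmde}. When $|\xi|>1$, the point $\xi$ lies outside the circular-law support, and the density of $\mu^\xi$ vanishes on a neighbourhood of $0$. This is the known picture in which the Hermitized spectrum has a gap at zero exactly when $|\xi|>1$, while the upper edge $C_\xi$ is bounded by $1+|\xi|+$slack.

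For \ref{thm_gaussian_least_singular_value}, knowing that $\mu^\xi$ has a gap is not enough, because we must exclude stray small singular values. We handle this with a "no outliers" result for Gaussian matrices. One option is the Bai--Silverstein "no eigenvalues outside the support" theorem for information-plus-noise models. Another is Gaussian concentration combined with the $\|\cdot\|$-resolvent estimates of \cite{brailovskaya2024universality} applied with $X=G$. Either route shows that $S^n$ has no eigenvalue in $(-c_\xi+\delta,c_\xi-\delta)$ with probability tending to one. We then set $\check c_\xi=c_\xi/2$. This is the step we expect to be the main obstacle, since it needs sharp edge control rather than weak convergence. The natural tool is the bound $\mathbb{E}\,\mathrm{tr}\,(S^n-zI)^{-1}=2n\,m^\xi(z)+O(1/n)$, uniformly for $z$ near the gap. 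Together with Gaussian Poincaré concentration of the trace of a smooth function of $S^n$ supported in the gap, this gives an expected count of $o(1)$ eigenvalues there.

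For \ref{thm_bilinear_gaussians} and \ref{item:concentration}, we use bi-unitary invariance only partially, because $G^n$ is real and its invariance is orthogonal. Concentration \ref{item:concentration} follows from the Gaussian Poincaré inequality. The map $G\mapsto\langle\tilde w,(S-zI)^{-1}\tilde q\rangle$ is Lipschitz with gradient norm $O(\Im(z)^{-2}n^{-1/2})$, so its variance is $O(n^{-1})$. For \ref{thm_bilinear_gaussians}, we compute the expectation. The isotropic local law, or a direct cumulant expansion via Gaussian integration by parts, gives $\mathbb{E}(S^n-i\eta)^{-1}_{\text{top-left block}}=m^\xi(i\eta)I_n+o(1)$ in operator norm. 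The block is asymptotically a scalar because the only deterministic structure is $\xi I$, which commutes with everything. Combining this with concentration yields convergence in probability. Almost sure convergence then follows from a fourth-moment bound of order $O(n^{-2})$, obtained from Gaussian hypercontractivity, together with Borel--Cantelli.
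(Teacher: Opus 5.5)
Your proposal is correct, but it argues much more than the paper does. The paper's proof is purely bibliographic:
\begin{itemize}
\item (a)--(b) are taken from \cite[Proposition~3.1]{bourgade2014local};
\item (c) from \cite[(2.17)]{cipolloni2023rightmost};
\item (d) is attributed to \cite[Theorem~1.1]{dozier2007analysis};
\item (e)--(f) are deduced from the deterministic-equivalent theorem for bilinear forms of information-plus-noise resolvents \cite[Theorem~1.1]{HLNV08}.
\end{itemize}
For (a)--(c) you also rest on standard Hermitization and Dyson-equation facts, so the two routes agree in substance there.

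The genuine difference is in (e)--(f), where you replace the information-plus-noise machinery by a self-contained Gaussian argument. Your gradient bound is right: in the standardized entries $W=\sqrt n\,G^n$ one has $\|\nabla F\|^2\le 4/(n\,\Im(z)^4)$ deterministically. So $F$ is globally Lipschitz with constant $O(n^{-1/2})$, which gives (f). Gaussian concentration for Lipschitz functions (a more accurate name than hypercontractivity) even gives tails $2e^{-cn\varepsilon^2\Im(z)^4}$, so Borel--Cantelli for (e) is immediate.

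For the mean you need neither an isotropic local law nor a cumulant expansion, and orthogonal invariance is not a weakness. Since $OG^nO^\top\stackrel{\mathcal L}{=}G^n$ for real orthogonal $O$, and $\xi I$ commutes with $O$, the top-left block $B_n(w)$ of $\mathbb{E}(S^n-wI)^{-1}$ satisfies $OB_nO^\top=B_n$. A complex matrix commuting with all real orthogonal matrices (already with all signed permutation matrices) is scalar. Hence $B_n(w)=c_n(w)I_n$ exactly, with $c_n(w)=\mathbb{E}\frac{1}{2n}\mathrm{Tr}(S^n-wI)^{-1}\to m^\xi(w)$ by (a) and dominated convergence.

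The paper's citation buys generality (non-Gaussian entries, variance profiles), at the cost of identifying the deterministic equivalent with $m^\xi$. Your route is elementary and uses exactly the Gaussian structure at hand. For (d), you rightly insist that a gap in the support of $\mu^\xi$ does not rule out stray small singular values, and that a genuine no-outlier input is needed; the paper compresses this point into a single citation.

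Two small corrections.
\begin{itemize}
\item The expansion $\mathbb{E}\,\mathrm{Tr}(S^n-zI)^{-1}=2n\,m^\xi(z)+O(1/n)$ is too optimistic for real Gaussian entries. Integration by parts produces an extra term involving $R^\top$, absent in the complex case because $\mathbb{E}G_{ij}^2=0$ there. As for the GOE, this term contributes at order one to the unnormalized trace. That tool therefore needs the additional fact, as in Schultz's treatment of the GOE, that this order-one correction integrates to zero against test functions supported in the gap; alternatively, use a ready-made no-outlier result.
\item The upper edge of $\mu^\xi$ is not $1+|\xi|$ up to small slack; for large $|\xi|$ it is close to $|\xi|+\sqrt2$. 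The trivial bound $C_\xi\le 2+|\xi|$, coming from $\|G^n\|\to 2$, is the one to use.
\end{itemize}
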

\begin{proof}
Random matrix models like $S^n$ are very popular and have been heavily studied.
\ref{thm_gaussians_e.s.d.} and \ref{thm_gaussians_support_density} can be found in Proposition 3.1 of \cite{bourgade2014local};\ref{thm_gaussians_fixed_point_stieltjes} can be found in \cite[(2.17)]{cipolloni2023rightmost}; \ref{thm_gaussian_least_singular_value} can be proven by a direct application of \cite[Theorem 1.1]{dozier2007analysis}. Finally \ref{thm_bilinear_gaussians} and \ref{item:concentration} are consequences of \cite[Theorem 1.1]{HLNV08}.
\end{proof}
\begin{cor}\label{cor_for_im(eta/eta}
    Let $\eta>0$ and $m^{\xi}$ the Stieltjes transform defined in Theorem \ref{thm_gaussians_properties}-\ref{thm_gaussians_fixed_point_stieltjes}, then one has:
    \begin{align*}
        \lim_{\eta \to 0} \frac{\Im (m^{\xi}(i \eta))}{\eta}=\frac{1}{|\xi|^2-1}
    \end{align*}
\end{cor}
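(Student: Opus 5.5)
We want $\lim_{\eta\to0}\Im m^\xi(i\eta)/\eta = 1/(|\xi|^2-1)$. The plan is to exploit the symmetry of $\mu^\xi$ and the spectral gap near the origin, given by Theorem~\ref{thm_gaussians_properties}-\ref{thm_gaussians_support_density}, and then identify the constant through the fixed point equation \eqref{eq:scmde}.

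First, since $\mu^\xi$ is symmetric and supported in $(-C_\xi,-c_\xi)\cup(c_\xi,C_\xi)$,
\[
m^\xi(i\eta)=\int\frac{\mu^\xi(dt)}{t-i\eta}=\int\frac{t+i\eta}{t^2+\eta^2}\,\mu^\xi(dt)=i\eta\int\frac{\mu^\xi(dt)}{t^2+\eta^2},
\]
because the real part vanishes by symmetry. Hence $m^\xi(i\eta)=i\eta\, g(\eta)$ with $g(\eta)=\int (t^2+\eta^2)^{-1}\mu^\xi(dt)$. As $|t|\ge c_\xi$ on the support, dominated convergence gives $g(\eta)\to g_0:=\int t^{-2}\mu^\xi(dt)\in(0,\infty)$. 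In particular $\Im m^\xi(i\eta)/\eta=g(\eta)\to g_0$, so the limit exists and it remains to compute $g_0$.

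Second, plug $w=i\eta$ and $m=i\eta g$ into \eqref{eq:scmde}. Set $w+m=i\eta(1+g)$. The equation becomes
\[
-\frac{1}{i\eta g}=i\eta(1+g)-\frac{|\xi|^2}{i\eta(1+g)}.
\]
Multiplying by $i\eta$ gives
\[
-\frac1g=-\eta^2(1+g)-\frac{|\xi|^2}{1+g}.
\]
Letting $\eta\to0$ and using $g\to g_0$ finite and positive yields $1/g_0=|\xi|^2/(1+g_0)$. Thus $1+g_0=|\xi|^2 g_0$, i.e. $g_0=1/(|\xi|^2-1)$, which is positive and consistent since $|\xi|>1$.

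The only delicate point is justifying that $g_0$ is finite and nonzero, so that the limit passage in the equation is legitimate and selects the correct root. Finiteness and positivity are exactly supplied by the gap $c_\xi>0$ in item~\ref{thm_gaussians_support_density}. Note that the alternative root $g_0=\infty$, which would correspond to mass at $0$ when $|\xi|<1$, is excluded by this gap.
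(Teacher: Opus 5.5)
Your proof is correct and follows essentially the same route as the paper: symmetry of $\mu^\xi$ gives $\Re m^\xi(i\eta)=0$, the gap $c_\xi>0$ guarantees that $\Im m^\xi(i\eta)/\eta$ has a finite positive limit, and substituting $m^\xi(i\eta)=i\eta g(\eta)$ into \eqref{eq:scmde} and letting $\eta\to 0$ identifies that limit as $1/(|\xi|^2-1)$. The only difference is cosmetic: you get existence of the limit directly by dominated convergence (which also identifies it as $\int t^{-2}\mu^\xi(dt)$), whereas the paper goes through a Lipschitz bound on $h(\eta)$; your version is slightly cleaner.
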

\begin{proof} Let $\rho^{\xi}$ denote the density of $\mu^{\xi}$, notice that $\rho^\xi$ is symmetric. Recall that $\mu^{\xi}$ is supported in $(-C_\xi,-c_\xi)\cup(c_\xi,C_\xi)$ for positive constants $c_\xi, C_\xi$.
   First, define the function $$h(\eta):=\frac{\Im(m^{\xi}(i\eta))}{\eta}=2\int_{c_\xi}^{C_{\xi}} \frac{\rho^{\xi}(x)}{x^2+\eta^2} dx\, .
   $$ 
  Then $h(\eta)$ is Lipschitz continuous on a small interval $(0,\varepsilon)$ with $\varepsilon<c_\xi$ since
\begin{align*}
    |h(\eta_1)-h(\eta_2)|\quad \leq\quad  4\left| \eta_1-\eta_2 \right| \,\varepsilon \int_{c_\xi}^{C_\xi} \frac{\rho^{\xi}(x)}{(x^2+\eta_1^2)(x^{2}+\eta_2^{2})}dx\quad \leq\quad \frac{4 \varepsilon}{c_\xi^4} \left| \eta_1-\eta_2 \right|  \, .
\end{align*}
In particular, the limit
    $\lim_{\eta \to 0}h(\eta)$
exists. The symmetry of the density $\rho^\xi$ yields that $\overline{m^\xi(i\eta)}= - m^\xi(i\eta)$ hence 
$$\Re\,m^\xi(i\eta)=0\,.$$ 
Rewriting the fixed point equation in Theorem \ref{thm_gaussians_properties}-\ref{thm_gaussians_fixed_point_stieltjes} in terms of function $h(\eta)$ yields
\begin{align}\label{ineq_h}
    1= h(\eta) \eta^2(1+h(\eta))+\frac{|\xi|^2 h(\eta)}{1+h(\eta)}\, .
\end{align}
Taking the limit of \eqref{ineq_h} as $\eta\to0$ we end up with the desired result:
$$
  h(0)=\frac{1}{|\xi|^2-1}\, .
$$
\end{proof}

We are now in position to compare quadratic forms based on the resolvent of $H^n$ and on the resolvent of $S^n$.

\begin{cor}\label{bilinear_comparison_cor}
Let $A^n$ satisfy Assumption \eqref{ass:subgaussian}, $z\in \mathbb{C}^+$, $\varepsilon>0$ and
\begin{align*}
    \lim_{n\to \infty} \frac{\log n}{K_n} = 0\, .
\end{align*}
Let $\tilde w^{2n}, \tilde q^{2n} \in \C^{2n}$ be deterministic unit vectors satisfying $\tilde w^{2n}_i=\tilde q^{2n}_i =0$ for $i\ge n+1$, 
then
\begin{align*}
    \lim_{n \to \infty} \mathbb{P} \left( \left| \langle (H^n(\xi) - zI)^{-1}\tilde w^{2n},\tilde q^{2n}\rangle -  \langle (S^n(\xi) - zI)^{-1}\tilde w^{2n}\,,\,\tilde q^{2n}\rangle \right| \geq \epsilon \right) = 0.
\end{align*}
\end{cor}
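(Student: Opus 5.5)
The natural route is a three-term triangle inequality. It compares both random bilinear forms with their expectations, and compares the two expectations with each other. Write
\[
\Delta_n = \langle (H^n - zI)^{-1}\tilde w^{2n},\tilde q^{2n}\rangle - \langle (S^n - zI)^{-1}\tilde w^{2n},\tilde q^{2n}\rangle ,
\]
and split it as $\Delta_n = T_1 + T_2 + T_3$, where
\[
T_1 = \langle (H^n - zI)^{-1}\tilde w^{2n},\tilde q^{2n}\rangle - \E\langle (H^n - zI)^{-1}\tilde w^{2n},\tilde q^{2n}\rangle ,
\]
\[
T_2 = \langle \bigl(\E(H^n - zI)^{-1} - \E(S^n - zI)^{-1}\bigr)\tilde w^{2n},\tilde q^{2n}\rangle ,
\]
\[
T_3 = \E\langle (S^n - zI)^{-1}\tilde w^{2n},\tilde q^{2n}\rangle - \langle (S^n - zI)^{-1}\tilde w^{2n},\tilde q^{2n}\rangle .
\]
It then suffices to show that each $T_j$ tends to $0$ in probability. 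For $T_2$, which is deterministic, this means showing that it tends to $0$.

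\textbf{Step 1 ($T_1$).} Lemma~\ref{lem_conc_bilinear} gives exactly this. Its hypothesis $\E|A^n_{11}|^8<\infty$ follows from Assumption~\ref{ass:subgaussian}, since sub-Gaussian variables have all moments. The vectors $\tilde w^{2n},\tilde q^{2n}$ are unit vectors supported on the first $n$ coordinates, which is the setting of that lemma. Hence $T_1\to 0$ in probability.

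\textbf{Step 2 ($T_3$).} This is Theorem~\ref{thm_gaussians_properties}-\ref{item:concentration} applied to the Gaussian matrix $S^n$ with the same vectors.

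\textbf{Step 3 ($T_2$).} Since $\tilde w^{2n},\tilde q^{2n}$ are unit vectors, $|T_2|\le \|\E(H^n-zI)^{-1}-\E(S^n-zI)^{-1}\|$. By item~\ref{E(X-G)} of Theorem~\ref{comparison_with_gaussian_thm} this norm tends to $0$. That item is stated under $\log^9 n/K_n\to 0$, but only the weaker condition $\log n/K_n\to 0$ assumed here is actually used. Indeed, the remark after that theorem and its proof show the bound
\[
\Im^{-2}(z)\bigl(2/\sqrt n + (C_1\log n/K_n)^{1/20}\bigr),
\]
which comes from \cite[Theorem 2.11]{brailovskaya2024universality} together with $\overline R\le (C_1\log n/K_n)^{1/2}$. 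This estimate requires only Assumption~\ref{ass:subgaussian} and $\log n/K_n\to0$.

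Combining the three steps, $\mathbb P(|\Delta_n|\ge\varepsilon)\le \sum_j \mathbb P(|T_j|\ge \varepsilon/3)\to 0$.

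The only delicate point is Step 3, namely making sure the expected-resolvent comparison is invoked under the weaker sparsity hypothesis. I would handle it by quoting the explicit bound from \cite[Theorem 2.11]{brailovskaya2024universality} directly rather than the theorem as stated. A smaller point is checking that the eighth-moment condition of Lemma~\ref{lem_conc_bilinear} holds, which it does by sub-Gaussianity.
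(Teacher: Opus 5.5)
Your proposal is correct and is essentially the paper's own proof. The paper uses the same three-term triangle inequality: the $H^n$ fluctuation is handled by Lemma~\ref{lem_conc_bilinear}, the Gaussian fluctuation by Theorem~\ref{thm_gaussians_properties}-\ref{item:concentration}, and the deterministic term by Theorem~\ref{comparison_with_gaussian_thm}-\ref{E(X-G)}; your two side checks (eighth moment from sub-Gaussianity, and that item~\ref{E(X-G)} needs only $\log n/K_n\to 0$) are exactly what the paper leaves implicit or states in its remark.
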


\begin{proof}
In the notations below, we drop the indices. The claim follows from the inequality
\begin{eqnarray*}
\lefteqn{    \left| \langle (H - zI)^{-1}\tilde w,\tilde q\rangle -  \langle (S - zI)^{-1}\tilde w,\tilde q\rangle \right| }\\
    &\leq& \left| \langle (H - zI)^{-1}\tilde w,\tilde q\rangle - \E \langle (H - zI)^{-1}\tilde w,\tilde q\rangle \right| \\
    && + \left| \langle (S - zI)^{-1}\tilde w,\tilde q\rangle - \E \langle (S - zI)^{-1}\tilde w,\tilde q\rangle \right| + \left\| \E(S - zI)^{-1} - \E(H - zI)^{-1} \right\|\,.
\end{eqnarray*}
The first term of the r.h.s. goes to zero in probability by Lemma \ref{lem_conc_bilinear}; the second term goes to zero by Theorem \ref{thm_gaussians_properties}-\ref{item:concentration}; the last term goes to zero by Theorem \ref{comparison_with_gaussian_thm}-\ref{E(X-G)}.

\end{proof}

\section{Proof of Theorem \ref{thm_eigenvector}}
\label{section_eigenvectors}
Recall the definition of $X^n$ in \eqref{sparser_matrices} and the fact that $Y^n=X^n+u^n(v^n)^\star$.
In all this section, we shall assume without generality loss that 
\[
 \langle v^n,u^n\rangle \ \xrightarrow[n\to\infty]{} \ \xi \in \C \qquad 
 \text{with} \quad |\xi| > 1, 
\]
since it is sufficient to establish the convergence in probability to all 
sub-sequential limits of $\langle v^n,u^n\rangle$.  

We start our analysis with a well-known linear algebra result (see, 
\emph{e.g.}, \cite{benaych2011eigenvalues,tao2013outliers}) that we prove for
completeness. 
\begin{lem}
\label{benaych} 
Let $z_0 \not\in\sigma(X^n)$. Then, $z_0\in\sigma(Y^n)$ if and only if
\[
1+\left\langle (X^n-z_0I)^{-1} u^n, v^n \right\rangle=0.
\] 
The case being, a right eigenvector corresponding to the eigenvalue $z_0$ of 
$Y^n$ is 
\[
   (X^n-z_0I)^{-1}u^n.
\] 
\end{lem}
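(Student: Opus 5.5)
The argument is pure linear algebra. It rests on the factorization
\[
Y^n - z_0 I = (X^n - z_0 I)\bigl(I + (X^n-z_0I)^{-1}u^n (v^n)^\star\bigr),
\]
which is legitimate because $z_0\notin\sigma(X^n)$ makes $X^n - z_0 I$ invertible. From it I will read off both the eigenvalue condition and the eigenvector.

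\emph{Eigenvalue characterization.} Taking determinants of the factorization gives
\[
\det(Y^n - z_0 I) = \det(X^n - z_0 I)\,\det\bigl(I + (X^n-z_0I)^{-1}u^n (v^n)^\star\bigr).
\]
I will apply the rank-one determinant identity $\det(I + a b^\star) = 1 + b^\star a$, which follows from Sylvester's identity $\det(I_n + AB)=\det(I_1+BA)$. With $a=(X^n-z_0I)^{-1}u^n$ and $b=v^n$ this yields
\[
\det(Y^n - z_0 I) = \det(X^n - z_0 I)\,\bigl(1 + (v^n)^\star (X^n-z_0I)^{-1}u^n\bigr).
\]
With the paper's convention $\langle u,v\rangle = u^\star v$, the last factor is exactly $1+\langle (X^n-z_0I)^{-1}u^n, v^n\rangle$ up to conjugation. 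The first factor is nonzero, so $z_0\in\sigma(Y^n)$ if and only if this scalar vanishes.

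\emph{Eigenvector.} Set $x=(X^n-z_0I)^{-1}u^n$. Then
\[
(Y^n - z_0 I)x = (X^n - z_0 I)x + u^n (v^n)^\star x = u^n\bigl(1 + (v^n)^\star x\bigr) = 0,
\]
where the last equality is the eigenvalue condition. To conclude that $x$ is an eigenvector, it remains to check $x\neq 0$. Since $(v^n)^\star x = -1$, the vector $x$ cannot be zero; this also shows $u^n\neq 0$.

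\emph{Where care is needed.} The only delicate point is bookkeeping the conjugation convention: one must check that $(v^n)^\star(X^n-z_0I)^{-1}u^n$ matches the inner product as written. Under the paper's convention it equals the complex conjugate of $\langle (X^n-z_0I)^{-1}u^n, v^n\rangle$, and a complex number vanishes if and only if its conjugate does. So the statement holds either way, and I would simply note this in the proof.
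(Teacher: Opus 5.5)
Your proof is correct and follows essentially the same route as the paper: the identity $\det(I+AB)=\det(I+BA)$ applied to $(X^n-z_0I)^{-1}u^n(v^n)^\star$, followed by the direct computation of $(Y^n-z_0I)(X^n-z_0I)^{-1}u^n$. You also add two details the paper's proof glosses over, and both are handled correctly: that $x\neq 0$ because $(v^n)^\star x=-1$, and that under the convention $\langle u,v\rangle=u^\star v$ the scalar in the statement is the complex conjugate of $1+(v^n)^\star(X^n-z_0I)^{-1}u^n$.
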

\begin{proof}

For the first part, since $z_0$ is not an eigenvalue of $X^n$ and by the
property that $\operatorname{det}(I+AB)=\operatorname{det}(I+BA)$ for 
rectangular matrices $A$ and $B$ with compatible dimensions, we get that 
\[
    \frac{\operatorname{det}(Y^n-z_0I)}{\operatorname{det}(X^n-z_0I)}= 
 \operatorname{det}(I+(X^n-z_0I)^{-1}u^n(v^n)^\star) 
 = 1+ \left\langle (X^n-z_0I)^{-1}u^n, v^n \right\rangle . 
\]
The claim follows. 

For the second part, for $z_0 \not\in \sigma(X^n)$, we have that
\[
    (Y^n-z_0I)(X^n-z_0I)^{-1}u^n = u^n+u^n (v^n)^\star(X^n-z_0I)^{-1}u^n 
    =\left(\left\langle (X^n-z_0I)^{-1}u^n,v^n\right\rangle+1\right) u^n.
\] 
Due to the first part of the lemma, if $z_0$ is an eigenvalue of $Y^n$, the 
right hand side of this expression is zero. Thus 
$Y^n(X^n-z_0I)^{-1}u^n = z_0 (X^n-z_0I)^{-1}u^n$ which is the required result. 
\end{proof}

Let us briefly present the strategy of proof. Thanks to the former result, we are 
led to study the behavior of 
\[
\left\langle \frac{u^n}{\| u^n \|},  
  \frac{(X^n - \lambda_{\max}(Y^n)I)^{-1} u^n}
  {\| (X^n - \lambda_{\max}(Y^n)I)^{-1} u^n \|} \right\rangle 
\]
on an appropriate probability event.  With the help of the results of the
former section, we first show that $(X^n-\lambda_{\max}(Y^n)I)^{-1}$ can be
replaced with $(X^n-\xi I)^{-1}$ in this expression. This is the aim of
Lemma~\ref{l-mu} below.  With the help of
Theorem~\ref{characteristic_poly_thm}, we then consider the asymptotics of
$\langle u^n, (X^n-\xi I)^{-1} u^n\rangle / \| u^n \|^2$ (Lemma~\ref{lm:inp}).
The remainder of the proof consists in studying $\| (X^n-\xi I)^{-1} u^n \| /
\| u^n \|$ with help of the results of Section~\ref{comp-gauss} again.


\begin{lem}
\label{l-mu} 
There exists a sequence $(\mathcal E_n^{\ref{l-mu}})$ of probability events 
such that $1_{\mathcal E_n^{\ref{l-mu}}} \to 1$ in probability, 
the smallest singular values of $X^n-\xi I$ and $X^n-\lambda_{\max}(Y^n)I$ are 
lower bounded by positive constants on $\mathcal E_n^{\ref{l-mu}}$, and 
moreover, it holds that 
\[
 1_{\mathcal{E}_n^{\ref{l-mu}}} 
 \|(X^n-\lambda_{\max}(Y^n)I)^{-1}-(X^n-\xi I)^{-1} \|  
 \quad
  \xrightarrow[n\to\infty]{\mathbb{P}} \quad 0\,.
\]
\end{lem}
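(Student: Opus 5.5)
We plan to build the event from three ingredients: a lower bound on $s_n(X^n-\xi I)$, the convergence $\lambda_{\max}(Y^n)\to\xi$, and the resolvent identity. Fix $\check c_\xi>0$ as in Theorem~\ref{thm_gaussians_properties}-\ref{thm_gaussian_least_singular_value}. By Theorem~\ref{comparison_with_gaussian_thm}-\ref{snX-G}, applied with $\varepsilon=\check c_\xi/2$, combined with Theorem~\ref{thm_gaussians_properties}-\ref{thm_gaussian_least_singular_value}, we get
\[
\mathbb P\{ s_n(X^n-\xi I)\ge \check c_\xi/2\}\to 1 .
\]
(Here we are in the setting of Theorem~\ref{thm_eigenvector}: sub-Gaussian entries and $\log^9 n/K_n\to0$.) By Corollary~\ref{lmax-uv} and the standing reduction $\langle v^n,u^n\rangle\to\xi$, we have $\mathbb P\{|\sigma^+_\varepsilon(Y^n)|=1\}\to1$ and $\lambda_{\max}(Y^n)-\xi\to0$ in probability. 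Choose a deterministic sequence $\delta_n\downarrow 0$ such that $\mathbb P\{|\lambda_{\max}(Y^n)-\xi|\le\delta_n\}\to1$; such a sequence exists by a standard diagonal argument from convergence in probability. Then define
\[
\mathcal E_n^{\ref{l-mu}}=\bigl\{ s_n(X^n-\xi I)\ge \check c_\xi/2\bigr\}\cap\bigl\{|\sigma^+_\varepsilon(Y^n)|=1\bigr\}\cap\bigl\{|\lambda_{\max}(Y^n)-\xi|\le \delta_n\bigr\}.
\]
By the union bound, $1_{\mathcal E_n^{\ref{l-mu}}}\to1$ in probability.

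On this event, Weyl's inequality for singular values gives
\[
s_n(X^n-\lambda_{\max}I)\ge s_n(X^n-\xi I)-|\lambda_{\max}-\xi|\ge \check c_\xi/2-\delta_n\ge \check c_\xi/4
\]
for $n$ large, since $s_n$ is $1$-Lipschitz with respect to operator norm perturbations. Both smallest singular values are therefore bounded below by positive constants. In particular $\lambda_{\max}(Y^n)\notin\sigma(X^n)$ on the event, so Lemma~\ref{benaych} applies later. The resolvent identity then gives
\[
(X^n-\lambda_{\max}I)^{-1}-(X^n-\xi I)^{-1}=(\lambda_{\max}-\xi)(X^n-\lambda_{\max}I)^{-1}(X^n-\xi I)^{-1},
\]
so on $\mathcal E_n^{\ref{l-mu}}$ its norm is at most $|\lambda_{\max}-\xi|\cdot (4/\check c_\xi)(2/\check c_\xi)\le 8\delta_n/\check c_\xi^2\to0$. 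This is deterministic convergence, which is stronger than convergence in probability.

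The only nontrivial input is the lower bound on $s_n(X^n-\xi I)$, and this is exactly where the comparison with the Gaussian matrix, and hence the hypothesis $\log^9 n/K_n\to0$, is used. Everything else is routine perturbation theory. One small point needs care: we must check that the definition $\lambda_{\max}(Y^n)=0$ off the event $\{|\sigma^+_\varepsilon(Y^n)|=1\}$ causes no issue. It does not, because that event is included in $\mathcal E_n^{\ref{l-mu}}$.
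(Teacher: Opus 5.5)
Your proposal is correct and follows the paper's proof: the paper also gets a lower bound on $s_n(X^n-\xi I)$ from the Gaussian comparison combined with the Gaussian least-singular-value bound, uses $\lambda_{\max}(Y^n)\to\xi$ in probability, and concludes with Weyl's inequality and the resolvent identity. The differences are cosmetic: the paper uses the fixed window $|\lambda_{\max}(Y^n)-\xi|\le c/2$ and concludes from convergence in probability, while you use a shrinking deterministic $\delta_n$ (cap it at $\check c_\xi/4$ so the singular-value bound holds for every $n$, not just large $n$) and add the event $\{|\sigma^+_\varepsilon(Y^n)|=1\}$, both of which are harmless.
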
 
\begin{proof}
We mainly need to control the smallest singular value $s_n(X^n-\xi I)$, and to
use Corollary~\ref{lmax-uv}, which shows in our context that
$$
\lambda_{\max}(Y^n) \xrightarrow[n\to\infty]{\mathbb{P}} \xi\, .
$$

To control $s_n(X^n-\xi I)$, we apply Theorems
\ref{comparison_with_gaussian_thm}-\ref{snX-G} and
\ref{thm_gaussians_properties}-\ref{thm_gaussian_least_singular_value} to
obtain the existence of a constant $c>0$ satisfying
\[
 \lim_n \mathbb{P}\left\{s_n(X^n-\xi I)\geq c\right\}=1\,.
\]
Defining the event 
\[
\mathcal E_n^{\ref{l-mu}} = \left\{ s_n(X^n-\xi I) \geq c \right\} \cap 
          \left\{ \left| \lambda_{\max}(Y^n) - \xi \right| \leq c/2\right\} , 
\]
we know from what precedes that $\mathbb P\{ \mathcal E_n^{\ref{l-mu}} \} \to_n 1$. Moreover,
by Weyl's inequality, we obtain that  
\[
s_n(X^n-\lambda_{\max}(Y^n) I) \quad \geq\quad  s_n(X^n-\xi I) - 
   |\xi- \lambda_{\max}(Y^n)|\,.
\]
Therefore, $s_n(X^n-\lambda_{\max}(Y^n) I) \geq c/2$ on 
$\mathcal E_n^{\ref{l-mu}}$, and both matrices $X^n-\xi I$ and 
$X^n-\lambda_{\max}(Y^n) I$ have their smallest singular values lower bounded
by a positive constant 
on $\mathcal E_n^{\ref{l-mu}}$. In particular, the expression 
$$1_{\mathcal{E}_n^{\ref{l-mu}}}
 \|(X^n-\lambda_{\max}(Y^n)I)^{-1}-(X^n-\xi I)^{-1} \|
 $$ is well-defined. On 
$\mathcal E_n^{\ref{l-mu}}$, we furthermore have 
\begin{eqnarray*} 
 \|(X^n-\lambda_{\max}(Y^n)I)^{-1}-(X^n-\xi I)^{-1} \| &= &
 \| (X^n-\lambda_{\max}(Y^n)I)^{-1}(X^n-\xi I)^{-1} 
    (\lambda_{\max}(Y^n) - \xi) \|\,, \\
 &\leq& | \lambda_{\max}(Y^n) - \xi | \ 
  \|(X^n-\lambda_{\max}(Y^n)I)^{-1} \| \ \|(X^n-\xi I)^{-1} \|\,, \\
 &\leq &\frac{2}{c^2} | \lambda_{\max}(Y^n) - \xi |\,,
\end{eqnarray*} 

and the second statement follows from the convergence of 
$\lambda_{\max}(Y^n)$ to $\xi$ in probability. 
\end{proof}

Next we turn our attention to $\langle (X^n-\xi I)^{-1}u^n, u^n \rangle$ on 
the event where $(X^n-\xi I)$ is invertible. 

\begin{lem}
\label{lm:inp}
Let $\mathcal E_n^{\ref{lm:inp}}$ be the event where $(X^n-\xi I)$ is invertible. Then, 
$1_{\mathcal E_n^{\ref{lm:inp}}} \to_n 1$ in probability, and 
    \begin{align*}
    1_{\mathcal E_n^{\ref{lm:inp}}}  \frac{1}{\| u^n\|^2} 
 \left\langle (X^n-\xi I)^{-1}u^n, u^n \right\rangle 
 \quad \xrightarrow[n\to \infty]{\mathbb{P}} \quad -\frac{1}{\xi}\,.  
    \end{align*}
\end{lem}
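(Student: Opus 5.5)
The plan is to reduce the statement to a statement about the random analytic function
\[
g_n(z) = \Bigl\langle (I - zX^n)^{-1} w^n, w^n\Bigr\rangle, \qquad w^n = \frac{u^n}{\|u^n\|},
\]
evaluated at $z = 1/\xi \in \mathcal D(0,1)$. Indeed, on $\mathcal E_n^{\ref{lm:inp}}$ we have $(X^n-\xi I)^{-1} = -\xi^{-1}(I - X^n/\xi)^{-1}$, so it suffices to show that $g_n(1/\xi) \to 1$ in probability. We may assume $\|u^n\|$ is bounded away from zero, since $|\langle v^n,u^n\rangle|>1$ and $\|v^n\|\le C$.

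By Corollary~\ref{cor_spectral_radius}, with probability tending to one $\rho(X^n) < 1+\varepsilon$ with $1/(1+\varepsilon) > 1/|\xi|$. Hence $g_n$ is analytic on a disk containing $1/\xi$ on this event, and $\mathcal E_n^{\ref{lm:inp}}$ has probability tending to one. The natural guess is that $g_n(z)\approx 1$, i.e.\ $\langle (X^n)^k w^n, w^n\rangle \to 0$ for every $k\ge 1$. Because the Neumann series does not converge in norm ($\|X^n\|\approx 2$), I will not sum moments directly. Instead I will use the determinant trick of Lemma~\ref{benaych}:
\[
h_n(z) := \det(I - zX^n) - \det\bigl(I - z(X^n + w^n (w^n)^\star)\bigr) = z\,\det(I - zX^n)\, g_n(z).
\]
Here $h_n$ is a polynomial, hence an $\HH$-valued random variable. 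Both determinants are tight in $\HH$ by Proposition~\ref{tightness-prop}, applied with $E^n=0$ and with $E^n = w^n(w^n)^\star$, which satisfies Assumption~\ref{ass:E}. Consequently $h_n$ is tight.

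Next I will identify the asymptotic law of the pair $(\det(I-zX^n), h_n)$, working jointly in $\HH^2$ (Proposition~\ref{cvgH} extends verbatim to pairs). The Taylor coefficients of $\det(I-zX^n)$ and of $\det(I-z(X^n+w^n(w^n)^\star))$ are the universal polynomials $P_k$ in the traces, as used in the proof of Theorem~\ref{characteristic_poly_thm}. Moreover,
\[
\tr\bigl((X^n+w^n(w^n)^\star)^k\bigr) = \tr\bigl((X^n)^k\bigr) + 1 + (\text{terms involving } \langle (X^n)^j w^n, w^n\rangle \text{ with } j\ge 1).
\]
Proposition~\ref{tr(Y)-tr(X)-det} (after truncation via Proposition~\ref{trunc}) already gives that the trace difference minus $\tr((w^n(w^n)^\star)^k)=1$ vanishes. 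Thus $h_n \sim z\,\det(I-zX^n)$ jointly with $\det(I-zX^n)$, since the $b_n$ factor is $1-z$ for the deformed determinant. The truncation step from $X^{n,D}$ back to $X^n$ is handled exactly as in the final part of the proof of Theorem~\ref{characteristic_poly_thm}, using Proposition~\ref{trunc} for both determinants.

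The key analytic step is then Skorokhod plus Hurwitz. Along any subsequence, by Lemma~\ref{lemma_moments_of_X^n} and tightness, $(\det(I-zX^n), h_n)$ converges in law to $(\kappa e^{-F}, z\kappa e^{-F})$. On a Skorokhod space the convergence is almost sure in $\HH^2$, and since $\kappa e^{-F}$ has no zeros in $\mathcal D(0,1)$, the quotient
\[
\frac{h_n(z)}{z\det(I-zX^n)} = g_n(z)
\]
converges to $1$ uniformly on compact subsets. I will justify the division by $z$ by noting that $h_n(0)=0$, and that the removable singularity commutes with locally uniform convergence. Since $g_n(1/\xi)$ is a measurable functional of the pair, it converges to $1$ in probability, which yields the lemma.

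The main obstacle is the step $\langle (X^n)^k w^n, w^n\rangle \to 0$ in probability for each fixed $k$, which underlies the claim $h_n\sim z\det(I-zX^n)$. My approach is the second-moment graph counting of Proposition~\ref{tr(Y)-tr(X)-det}, now with paths carrying weights $\overline{w_{i_1}} w_{i_{k+1}}$ at the two endpoints instead of $E$-edges. By Cauchy--Schwarz, a degree-one endpoint costs $\sqrt n$, exactly as for $\deg=1$ vertices in Lemma~\ref{|B|-tree}, and this should give $\E|\langle (X^n)^k w^n,w^n\rangle|^2 = O(1/n)+O(1/K_n)$ for bounded entries. For example, when $k=1$ it equals $\sum_{i,j}|w_i|^2|w_j|^2/n = 1/n$. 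If this counting becomes messy, I will fall back on the observation that this is a special case of Proposition~\ref{tr(Y)-tr(X)-det} with $E^n=w^n(w^n)^\star$: the mixed terms containing exactly one $E$-block are precisely $\langle (X^n)^{k-1}w^n,w^n\rangle$, and the remaining mixed terms are controlled by the same lemma.
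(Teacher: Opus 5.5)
Your proposal is correct and follows essentially the paper's route. The paper also uses the rank-one determinant identity, the joint asymptotic equivalence of $\det(I-z(X^n+w^n(w^n)^\star))$ and $\det(I-zX^n)$ obtained from Proposition~\ref{tr(Y)-tr(X)-det} and the machinery of Theorem~\ref{characteristic_poly_thm}, and then Skorokhod together with the non-vanishing of $\kappa e^{-F}$ to pass to the ratio at $z=1/\xi$. The differences are cosmetic: you normalize to $w^n=u^n/\|u^n\|$ where the paper extracts a subsequence with $\|u^n\|\to\beta$, and you divide $h_n$ by $z$ via Hurwitz where the paper takes the ratio directly at $z=1/\xi$. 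Your final ``main obstacle'' paragraph is unnecessary, because Proposition~\ref{tr(Y)-tr(X)-det} applied to $E^n=w^n(w^n)^\star$ (after truncation) already gives $\tr((X^n+w^n(w^n)^\star)^k)-\tr((X^n)^k)-1\to 0$ in probability, which is all the determinant argument needs.
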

\begin{proof}
The convergence $1_{\mathcal E_n^{\ref{lm:inp}}} \to_n 1$ in probability follows
obviously from, \emph{e.g.}, Theorem~\ref{rho-sparse}. Arguing as in the proof
of Lemma~\ref{benaych}, we furthermore have 
\[
1_{\mathcal E_n^{\ref{lm:inp}}} \det(I-\xi^{-1}(X^n+u^n(u^n)^\star)) = 
 1_{\mathcal E_n^{\ref{lm:inp}}} 
 \left( 1+ \left\langle u^n, (X^n-\xi I)^{-1}u^n \right\rangle\right) 
   \det (I - \xi^{-1} X^n)\, . 
\]
By Assumptions~\ref{ass:E} and \ref{ass:vectors}, the sequence $(\| u^n \|)$
converges to a limit $\beta > 0$ along a subsequence that we still denote as
$(n)$. We fix this subsequence. Setting 
$$
\check{E}^n=u^n (u^n)^\star\quad \textrm{and}\quad \check{Y}^n = X^n + \check{E}^n\ ,
$$ and defining 
the $\HH^2$--valued random vector $[ q^\cY_n \ q^X_n ]^T$ as 
\[
\begin{pmatrix} q_n^\cY(z) \\ q_n^X(z) \end{pmatrix} = 
 \begin{pmatrix} \det(I- z \cY^n) \\ \det(I - z X^n) \end{pmatrix} \ , 
\]
we easily see that the sequence $([ q^\cY_n \ q^X_n ]^T)$ is tight in the 
space $\HH^2$ equipped with the product distance, and furthermore, 
by inspecting again the proof of Theorem~\ref{characteristic_poly_thm}
(in particular, Proposition~\ref{tr(Y)-tr(X)-det} with $\check{E}^n = u^n (u^n)^\star$ 
and Lemma~\ref{lemma_moments_of_X^n}), that 
\[
\begin{pmatrix} q_n^\cY \\ q_n^X \end{pmatrix} 
\ \xrightarrow[n\to\infty]{\text{law}} \
\kappa \exp(-F) \begin{pmatrix} b_\infty \\ 1 \end{pmatrix} \qquad \textrm{with}\quad b_\infty(z) = 1 - \beta^2 z\, .
\]
By Slutsky's theorem, we then get that 
\[
1_{\mathcal E_n^{\ref{lm:inp}}} \begin{pmatrix} q_n^\cY \\ q_n^X \end{pmatrix} 
\quad \xrightarrow[n\to\infty]{\text{law}} \quad 
\kappa \exp(-F) \begin{pmatrix} b_\infty \\ 1 \end{pmatrix} . 
\]
By Skorokhod's representation theorem, there exists a sequence of
$\C^2$--valued random variables $([ p^\cY_n \ p^X_n ]^T)$ and a $\C^2$--valued
random variable $([ p^\cY_\infty \ p^X_\infty ]^T)$ on a probability space
$\widetilde{\Omega}$, such that 
$$\begin{pmatrix} p^\cY_n \\ 
p^X_n\end{pmatrix} \ \stackrel{\text{law}}{=}\ 
\1_{\mathcal E_n^{\ref{lm:inp}}} \begin{pmatrix} q^\cY_n(1/\xi) \\ q^X_n(1/\xi) \end{pmatrix}\qquad \textrm{and}\qquad \begin{pmatrix}p^\cY_\infty \\
p^X_\infty \end{pmatrix}\  \stackrel{\text{law}}{=}\ \kappa(1/\xi) \exp(-F(1/\xi)) \begin{pmatrix}
b_\infty(1/\xi) \\ 1 \end{pmatrix}\ ,
$$ and $([ p^Y_n \ p^X_n ]^T)$ converges to $([
p^\cY_\infty \ p^X_\infty ]^T)$ for all $\tilde \omega \in \widetilde{\Omega}$.
Recalling that $\kappa \exp(-F) \neq 0$, it holds that the random variable
$p^\cY_n / p^X_n$ converges pointwise to $p^\cY_\infty / p^X_\infty
\stackrel{\text{law}}{=} b_\infty(1/\xi)$. This implies that
\[ 
1_{\mathcal E_n^{\ref{lm:inp}}} 
 \left( 1+ \left\langle u^n, (X^n-\xi I)^{-1}u^n \right\rangle\right) 
\ =\ 1_{\mathcal E_n^{\ref{lm:inp}}} \frac{q^\cY_n(1/\xi)}{q^X_n(1/\xi)} 
 \quad \xrightarrow[n\to\infty]{\mathbb P} \quad b_\infty(1/\xi)\ =\ 1 - \frac{\beta^2}{\xi}\,, 
\]
and the result of the lemma follows. 
\end{proof}

It remains to establish an asymptotic result for $\frac{1}{\| u^n\|^2}\|
(X^n-\xi I)^{-1}u^n\|^2$. It will be more convenient to work with the
hermitisation $H_n(\xi)$ of $X^n$ defined in \eqref{def:hermit}. Furthermore, it will also be convenient to introduce a small parameter $\eta>0$ and work on the resolvent $(H^n-zI)^{-1}$ of $H^n$ evaluated at $z=i\eta$. Specifically:

\begin{lem}
\label{H-1} 
There exists a sequence of events $(\mathcal E_n^{\ref{H-1}})$ such that 
$H^n$ is invertible on $\mathcal E_n^{\ref{H-1}}$, 
$\mathcal E_n^{\ref{l-mu}} \subset \mathcal E_n^{\ref{H-1}}$, and 
\[
\1_{\mathcal E_n^{\ref{H-1}}} \|(H^n)^{-1}-(H^n - i\eta I )^{-1}\| 
  \quad \leq\quad C_{\ref{H-1}} \eta 
\] 
for some constant $C_{\ref{H-1}} > 0$. 
\end{lem}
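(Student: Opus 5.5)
The plan is to use that $H^n=H^n(\xi)$ is Hermitian and that its eigenvalues are $\pm$ the singular values of $X^n-\xi I$. We take $\mathcal E_n^{\ref{H-1}} = \{ s_n(X^n-\xi I) \geq c\}$, where $c>0$ is the constant from the proof of Lemma~\ref{l-mu}. By the definition of $\mathcal E_n^{\ref{l-mu}}$ given there, $\mathcal E_n^{\ref{l-mu}} \subset \mathcal E_n^{\ref{H-1}}$.

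First, on $\mathcal E_n^{\ref{H-1}}$ every eigenvalue $\lambda$ of $H^n$ satisfies $|\lambda|\ge s_n(X^n-\xi I)\ge c>0$. Hence $H^n$ is invertible on this event.

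Next we use the resolvent identity
\[
(H^n)^{-1}-(H^n-i\eta I)^{-1} = -\,i\eta\,(H^n)^{-1}(H^n-i\eta I)^{-1}.
\]
Both factors are functions of the same Hermitian matrix, so by the spectral theorem the norm of the right-hand side equals
\[
\eta \max_{\lambda\in\sigma(H^n)} \frac{1}{|\lambda|\,|\lambda-i\eta|}.
\]
Since $\lambda$ is real, $|\lambda-i\eta|=\sqrt{\lambda^2+\eta^2}\ge|\lambda|\ge c$. Therefore, on $\mathcal E_n^{\ref{H-1}}$, the norm is bounded by $\eta/c^2$, and we may take $C_{\ref{H-1}}=c^{-2}$. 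This constant is independent of $n$ and $\eta$.

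There is no real obstacle here. The only point needing care is to make sure the constant $c$ is the same one used in defining $\mathcal E_n^{\ref{l-mu}}$, so that the inclusion of events holds. Its existence comes from Theorems~\ref{comparison_with_gaussian_thm}-\ref{snX-G} and \ref{thm_gaussians_properties}-\ref{thm_gaussian_least_singular_value}, as already invoked in the proof of Lemma~\ref{l-mu}.
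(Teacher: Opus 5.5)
Your proof is correct and follows the paper's argument: the same event $\{s_n(X^n-\xi I)\ge c\}$ with the constant $c$ from Lemma~\ref{l-mu}, the same resolvent identity, and the same bound $\eta/c^2$. The only cosmetic difference is that you compute the norm of $(H^n)^{-1}(H^n-i\eta I)^{-1}$ exactly via the spectral theorem, whereas the paper bounds it by the product of the two operator norms, each at most $1/c$.
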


\begin{proof}
Recall that $\lambda$ is an eigenvalue of the Hermitian matrix $H^n$
if and only if $\lambda$ or $-\lambda$ is a singular value of $X^n - \xi I$. 
Thus, the event 
\[
\mathcal E_n^{\ref{H-1}} = \left\{s_n(X^n-\xi I)\geq c\right\}
\]
where $c > 0$ is the one chosen in the proof of Lemma~\ref{l-mu} satisfies the first two
assertions of the statement. 

On the event $\mathcal E_n^{\ref{H-1}}$, it holds that $\|(H^n)^{-1}\| \leq
1/c$. On the same event, since the singular values of $(H^n - i \eta I)^{-1}$ are
of the form $1 / |\lambda_k - i\eta|$ where the $\lambda_k$'s are the real
eigenvalues of $H^n$, we obtain that $\|(H^n - i\eta I)^{-1} \| \leq 1/c$. 
By the resolvent identity, on this event, we therefore obtain the following estimate: 
\[
\| (H^n)^{-1} - (H^n - i \eta I)^{-1} \| \quad =\quad  
\| (H^n)^{-1} (H^n - i \eta I)^{-1} \eta \| \quad \leq\quad  
\| (H^n)^{-1} \| \times \| (H^n - i \eta I)^{-1} \|\, \eta  \quad \leq \quad
\frac{\eta}{c^2}\,. 
\]
\end{proof}
For the resolvent $(H^n-i\eta I)^{-1}$ we have that
\begin{lem}
\label{H->m} 
    Consider a sequence of deterministic unit vectors $\tilde w^{2n}\in \C^{2n}$ satisfying 
    $$
    \tilde w_i^{2n}=0\quad \textrm{for}\quad i\in \{n+1,\cdots, 2n\}\,,
    $$ 
    then the following limit holds:
    \begin{align*}
       \|(H^n-i\eta I)^{-1}\tilde w^{2n}\|^2 \quad \xrightarrow[n\to\infty]{\mathbb{P}} \quad  \frac{\Im (m^{\xi}(i \eta))}{\eta}\ ,
    \end{align*}
    where $m^{\xi}$ is the Stieltjes transform of the probability measure $\xi^{\xi}$ defined in the statement of Theorem \ref{thm_gaussians_properties}.
\end{lem}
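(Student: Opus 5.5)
The starting point is to turn the squared norm into a bilinear form of the resolvent. Since $H^n$ is Hermitian,
\[
\|(H^n-i\eta I)^{-1}\tilde w\|^2 = \langle \tilde w, (H^n+i\eta I)^{-1}(H^n-i\eta I)^{-1}\tilde w\rangle .
\]
The resolvent identity gives
\[
(H^n+i\eta I)^{-1}(H^n-i\eta I)^{-1} = \frac{1}{2i\eta}\bigl[(H^n-i\eta I)^{-1}-(H^n+i\eta I)^{-1}\bigr] = \frac{1}{\eta}\Im\, (H^n-i\eta I)^{-1}.
\]
Here $\Im M := (M-M^\star)/(2i)$ and $(H^n+i\eta I)^{-1} = ((H^n-i\eta I)^{-1})^\star$. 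Hence
\[
\|(H^n-i\eta I)^{-1}\tilde w\|^2 = \frac{1}{\eta}\,\Im \langle \tilde w,(H^n-i\eta I)^{-1}\tilde w\rangle ,
\]
which is exactly $\eta^{-1}\Im$ of a diagonal bilinear form of the resolvent at $z=i\eta\in\C^+$. The problem is therefore reduced to showing that
\[
\langle \tilde w,(H^n-i\eta I)^{-1}\tilde w\rangle \xrightarrow{\mathbb P} m^\xi(i\eta).
\]
Once this holds, taking imaginary parts and dividing by the fixed $\eta>0$ preserves the convergence in probability.

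To prove the reduced claim, I apply Corollary~\ref{bilinear_comparison_cor} with $\tilde q^{2n}=\tilde w^{2n}$ and $z=i\eta$. This replaces $H^n$ by $S^n$ up to an error tending to zero in probability. The hypotheses of that corollary are available: sub-Gaussianity is Assumption~\ref{ass:subgaussian}, and the standing condition $\log^9 n/K_n\to0$ of Theorem~\ref{thm_eigenvector} implies $\log n/K_n\to 0$. One point needs care. Corollary~\ref{bilinear_comparison_cor} is written with $\langle (H-zI)^{-1}\tilde w,\tilde q\rangle$, whereas here the form is $\langle\tilde w,(H-zI)^{-1}\tilde w\rangle$. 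For $\tilde q=\tilde w$ these two are complex conjugates up to replacing $z$ by $\bar z$. So I would either apply the corollary directly with the conjugated convention, or simply take the imaginary part, which changes only by a sign that is tracked consistently on both sides.

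Next I invoke Theorem~\ref{thm_gaussians_properties}-\ref{thm_bilinear_gaussians}. It gives almost surely
\[
\langle \tilde w,(S^n-i\eta I)^{-1}\tilde w\rangle - m^\xi(i\eta)\|\tilde w\|^2 \to 0,
\]
with $\|\tilde w\|=1$, and almost sure convergence implies convergence in probability. Chaining the two approximations then yields the claim.

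The main technical input is Corollary~\ref{bilinear_comparison_cor}, which is already established, so the lemma itself is short. The step I expect to need the most attention is the bookkeeping of conjugation and sign conventions: $\langle u,v\rangle=u^\star v$, and the sign of $\Im$ under conjugation. Done carefully, this shows the limit is $+\Im m^\xi(i\eta)/\eta>0$, consistent with $m^\xi$ being a Stieltjes transform with $\Im m^\xi(i\eta)>0$.
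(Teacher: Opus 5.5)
Your proposal is correct and is essentially the paper's proof: the resolvent identity rewrites $\|(H^n-i\eta I)^{-1}\tilde w^{2n}\|^2$ as $\eta^{-1}\Im\langle \tilde w^{2n},(H^n-i\eta I)^{-1}\tilde w^{2n}\rangle$, after which Corollary~\ref{bilinear_comparison_cor} (with $\tilde q^{2n}=\tilde w^{2n}$, $z=i\eta$) and Theorem~\ref{thm_gaussians_properties}-\ref{thm_bilinear_gaussians} give the limit. The conjugation point you flag is harmless: $\langle (H^n-zI)^{-1}\tilde w^{2n},\tilde w^{2n}\rangle$ is simply the complex conjugate of $\langle \tilde w^{2n},(H^n-zI)^{-1}\tilde w^{2n}\rangle$, and convergence in probability to zero is preserved under conjugation.
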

\begin{proof}
Denoting as $\Im M = (M - M^\star)/(2 i)$ the imaginary part of a complex matrix, 
it holds by the resolvent's identity that 
\[
 \left((H^n-i\eta I)^{-1}\right)^\star (H^n-i\eta I)^{-1}=
 \frac{1}{\eta} \Im \left((H^n-i\eta I)^{-1}\right).
\]
From this, we conclude that 
\begin{align*}
    &\|(H^n-i\eta I)^{-1}\tilde w^{2n}\|^2=\left\langle (H^n-i\eta I)^{-1}\tilde w^{2n},(H^n-i\eta I)^{-1}\tilde w^{2n}\right\rangle= \left\langle \left((H^n-i\eta I)^{-1}\right)^\star (H^n-i\eta I)^{-1} \tilde w^{2n}, \tilde w^{2n} \right\rangle \\&=\left\langle\frac{1}{\eta} \Im \left((H^n-i\eta I)^{-1}\right)w^{2n},w^{2n} \right\rangle , 
\end{align*}
and the claim follows by combining Corollary \ref{bilinear_comparison_cor} 
with Theorem \ref{thm_gaussians_properties}-\ref{thm_bilinear_gaussians}.
\end{proof}
We are now ready to examine the asymptotic behavior of $\frac{1}{\| u^n\|^2}\| (X^n-\xi I)^{-1}u^n\|^2$.
\begin{lem}
\label{normXu} 
    Let $u^n\in \mathbb{C}^n$ be a deterministic vector, then the following limit holds:
    \begin{align*}
    \1_{\mathcal E_n^{\ref{H-1}}}  \frac{\| (X^n-\xi I)^{-1}u^n\|}{\|u^n\|} 
    \quad \xrightarrow[n\to\infty]{\mathbb{P}} 
    \quad   \frac{1}{\sqrt{|\xi|^2-1}}\, .
    \end{align*}
\end{lem}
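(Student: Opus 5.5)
The approach is to express $\|(X^n-\xi I)^{-1}u^n\|$ through the hermitization $H^n=H^n(\xi)$. We then pass from $(H^n)^{-1}$ to $(H^n-i\eta I)^{-1}$ using Lemma~\ref{H-1}, apply Lemma~\ref{H->m}, and finally let $\eta\to0$ with Corollary~\ref{cor_for_im(eta/eta}.

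Set $\tilde w^{2n}=\begin{pmatrix}0_n\\ u^n/\|u^n\|\end{pmatrix}$. On $\mathcal E_n^{\ref{H-1}}$ the matrix $X^n-\xi I$ is invertible. The block form of $H^n$ then gives
\[
(H^n)^{-1}=\begin{pmatrix}0 & ((X^n-\xi I)^{\star})^{-1}\\ (X^n-\xi I)^{-1} & 0\end{pmatrix},
\]
so that $\|(H^n)^{-1}\tilde w^{2n}\|=\|(X^n-\xi I)^{-1}u^n\|/\|u^n\|$.

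Lemma~\ref{H->m} and the results it rests on (Corollary~\ref{bilinear_comparison_cor}, Theorem~\ref{thm_gaussians_properties}) are stated for vectors supported on the first $n$ coordinates. Two fixes are possible. The first is to note that the lower-right block of $(H^n-zI)^{-1}$ is $z((X-\xi)^\star(X-\xi)-z^2)^{-1}$, and that the same arguments apply to it: the concentration proof of Lemma~\ref{lem_conc_bilinear} goes through with rows in place of columns, and the Gaussian statements are symmetric because $G^n$ and $(G^n)^\star$ have the same law. The second, cleaner fix is to use $\tilde w^{2n}=\begin{pmatrix}u^n/\|u^n\|\\ 0_n\end{pmatrix}$ with $(H^n)^{-1}\tilde w$, which yields $\|((X^n-\xi I)^\star)^{-1}u^n\|$. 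That is the wrong one, so I adopt the first route and spell out the symmetry: $X^n$ and $(X^n)^\star$ have the same law, and $u^n$ is deterministic. Applying Lemma~\ref{H->m} to the model built from $(X^n)^\star$, whose hermitization is unitarily conjugate to $H^n$ with the blocks swapped, gives
\[
\|(H^n-i\eta I)^{-1}\tilde w^{2n}\|^2\xrightarrow{\mathbb P}\frac{\Im m^{\xi}(i\eta)}{\eta}.
\]
Here one should check that $\bar\xi$ in place of $\xi$ does not matter, since $m^\xi$ depends only on $|\xi|$.

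Next fix $\eta>0$. On $\mathcal E_n^{\ref{H-1}}$, Lemma~\ref{H-1} gives
\[
\left|\|(H^n)^{-1}\tilde w\|-\|(H^n-i\eta I)^{-1}\tilde w\|\right|\le C_{\ref{H-1}}\eta.
\]
Combined with $\1_{\mathcal E_n^{\ref{H-1}}}\to1$ in probability, this shows that
\[
\limsup_n\mathbb P\left(\left|\1_{\mathcal E_n^{\ref{H-1}}}\frac{\|(X^n-\xi I)^{-1}u^n\|}{\|u^n\|}-\sqrt{\Im m^\xi(i\eta)/\eta}\right|>C\eta+\delta\right)=0
\]
for every $\delta>0$. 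Letting $\eta\to0$ and using Corollary~\ref{cor_for_im(eta/eta}, which says $\Im m^\xi(i\eta)/\eta\to1/(|\xi|^2-1)$, concludes the proof by a standard $\varepsilon/3$ argument.

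The main obstacle is the transfer step: justifying that Lemma~\ref{H->m} holds for vectors supported on the last $n$ coordinates. This requires either the law-symmetry argument above, or rechecking that Theorem~\ref{comparison_with_gaussian_thm}-\ref{E(X-G)} (a full matrix-norm bound, so it is fine as stated), the concentration Lemma~\ref{lem_conc_bilinear}, and Theorem~\ref{thm_gaussians_properties}-\ref{thm_bilinear_gaussians} also hold for the lower block.
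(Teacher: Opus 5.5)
Your block computation is reversed, so the argument as written proves a different statement. The block form of $(H^n)^{-1}$ you wrote is correct, and it gives
\[
(H^n)^{-1}\begin{pmatrix}0_n\\ a\end{pmatrix}=\begin{pmatrix}((X^n-\xi I)^{\star})^{-1}a\\ 0_n\end{pmatrix},
\qquad
(H^n)^{-1}\begin{pmatrix}a\\ 0_n\end{pmatrix}=\begin{pmatrix}0_n\\ (X^n-\xi I)^{-1}a\end{pmatrix}.
\]
For your choice $\tilde w^{2n}=(0_n,u^n/\|u^n\|)^T$, the identity you claim is therefore false. On $\mathcal E_n^{\ref{H-1}}$ one has $\|(H^n)^{-1}\tilde w^{2n}\|=\|((X^n-\xi I)^{\star})^{-1}u^n\|/\|u^n\|$. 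Everything downstream is then about this adjoint quantity: Lemma~\ref{H-1}, the transferred version of Lemma~\ref{H->m}, and Corollary~\ref{cor_for_im(eta/eta}. That is a different random variable from $\|(X^n-\xi I)^{-1}u^n\|/\|u^n\|$ in the statement. Getting back to the right quantity would require a second adjoint symmetry, which just undoes the first.

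The fix is the vector you discarded as ``the wrong one.'' Take $q^{2n}=(u^n/\|u^n\|,0_n)^T$, supported on the first $n$ coordinates. Then exactly $\1_{\mathcal E_n^{\ref{H-1}}}\|(H^n)^{-1}q^{2n}\|=\1_{\mathcal E_n^{\ref{H-1}}}\|(X^n-\xi I)^{-1}u^n\|/\|u^n\|$. Lemma~\ref{H->m} applies verbatim to this vector, so the ``main obstacle'' you identify disappears. With this vector, the rest of your argument coincides with the paper's proof: pass to $(H^n-i\eta I)^{-1}$ via Lemma~\ref{H-1}, apply Lemma~\ref{H->m}, and let $\eta\to 0$ using Corollary~\ref{cor_for_im(eta/eta}.

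A side remark: the symmetry you invoke is also misstated. $(X^n)^\star$ has the law of $X^n$ only if $\chi$ and $\bar\chi$ are equal in law. In general only the transpose has that law, and $(X^n)^\star$ is the same model built from $\bar\chi$. This becomes moot once you use the correct vector.
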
 
\begin{proof}
Denote by $q^{2n}\in \C^{2n}$ the deterministic unit vector defined by 
$$
q^{2n} =\begin{pmatrix}
    u^n/\|u^n\|\\ 0_n
\end{pmatrix}\, .
$$
Recall that $H^n$ is invertible on $\mathcal
E_n^{\ref{H-1}}$ and notice that on this event $(H^n)^{-1}$ writes 
\[
(H^n)^{-1} = \begin{pmatrix}  0 & (X^n - \xi I)^{-\star} \\ (X^n-\xi I)^{-1} & 0 
 \end{pmatrix} \,.
\]
In particular 
\[
 \1_{\mathcal E_n^{\ref{H-1}}}  \frac{\|(X^n-\xi I)^{-1}u^n\|}{\| u^n \|} = 
 \1_{\mathcal E_n^{\ref{H-1}}}  \|(H^n)^{-1} q^{2n}\|\, .
\]
Fix an arbitrarily small $\varepsilon > 0$ and choose $\eta > 0$ small enough  
so that 
\[
C_{\ref{H-1}} \eta \,\leq\, \frac{\varepsilon}{2} \qquad \textrm{and}\qquad \sqrt{\frac{\Im m^\xi(\eta)}{\eta}} > 
  \frac{1}{\sqrt{|\xi|^2-1}} - \frac{\varepsilon}{4}\,, 
\]
which is possible by Corollary \ref{cor_for_im(eta/eta}. With this choice, we have by Lemma~\ref{H-1} 
$$
\left|\, \1_{\mathcal E_n^{\ref{H-1}}}  
  \left\| (H^n)^{-1} q^{2n} \right\| 
  - \1_{\mathcal E_n^{\ref{H-1}}}  \left\| (H^n-i\eta I)^{-1} q^{2n} \right\| \,\right| \quad \leq  \quad 
 \1_{\mathcal E_n^{\ref{H-1}}}  
  \left\| (H^n)^{-1} q^{2n} - 
(H^n-i\eta I)^{-1} q^{2n} \right\| \quad \leq\quad  \frac{\varepsilon}{2} \,. 
$$
Now
\begin{eqnarray*}
\left\{ \left| \1_{\mathcal E_n^{\ref{H-1}}}  \frac{\| (X^n-\xi I)^{-1}u^n \|}
  {\| u^n \|} - \frac{1}{\sqrt{|\xi|^2-1}} \right| \geq \varepsilon \right\} &\subset &
 \left\{ \left| 
 \1_{\mathcal E_n^{\ref{H-1}}}  
  \left\|(H^n - i\eta I)^{-1} q^{2n} \right\| 
- \frac{1}{\sqrt{|\xi|^2-1}} \right| \geq \frac{\varepsilon}{2} \right\} \\
&\subset& \left\{ 
   \left| \left\| (H^n - i\eta I)^{-1} q^{2n} \right\| 
  - \sqrt{\frac{\Im m^\xi(\eta)}{\eta}} \right| \geq \frac{\varepsilon}{4} 
   \right\} \, .
\end{eqnarray*}
Taking the probability of both events, combined with Lemma~\ref{H->m}, yields the desired result.
\end{proof}
We conclude with the proof of Theorem \ref{thm_eigenvector}.
\begin{proof}[Proof of Theorem \ref{thm_eigenvector}]
We need to show that 
\[
 \left| \left\langle\frac{u^n}{\|u^n\|},\tilde u^n \right\rangle\right|^2 \quad 
\xrightarrow[n\to\infty]{\mathbb{P}} \quad 1 - \frac{1}{|\xi|^2}. 
\]
To this end, we are allowed to multiply the left hand side with 
$\1_{|\sigma^+_\varepsilon(Y^n)| = 1} \1_{\mathcal E_n^{\ref{l-mu}}}$ 
which converges to one in probability by Corollary~\ref{lmax-uv} 
and Lemma~\ref{l-mu}. 

On the event $\left\{ |\sigma^+_\varepsilon(Y^n)| = 1 \right\}$, the right eigenspace of 
$Y^n$ associated with $\lambda_{\max}(Y^n)$ is one-dimensional. By 
Lemma~\ref{benaych}, we are therefore reduced to showing that 
\[
\1_{|\sigma^+_\varepsilon(Y^n)| = 1} \1_{\mathcal E_n^{\ref{l-mu}}} 
 \frac{\left| \left\langle u^n, (X^n - \lambda_{\max}(Y^n) I)^{-1} u^n 
   \right\rangle\right|^2} 
 {\| u^n \|^2 \| (X^n - \lambda_{\max}(Y^n) I)^{-1} u^n \|^2} \quad 
\xrightarrow[n\to\infty]{\mathbb{P}}\quad  1 - \frac{1}{|\xi|^2}\,. 
\]
Noticing that 
$\mathcal E_n^{\ref{l-mu}} \subset \mathcal E_n^{\ref{lm:inp}}$, we obtain
by Lemmas~\ref{l-mu} and~\ref{lm:inp} that 
\[
    \1_{\mathcal E_n^{\ref{l-mu}}}  \frac{1}{\| u^n\|^2} 
 \left\langle (X^n-\lambda_{\max}(Y^n) I)^{-1}u^n, u^n \right\rangle 
 \quad \xrightarrow[n\to\infty]{\mathbb{P}} \quad
 -\frac{1}{\xi}\,.  
\]
By Lemmas~\ref{l-mu}, \ref{H-1} and~\ref{normXu}, it holds that 
\[
 \1_{\mathcal E_n^{\ref{l-mu}}}  
\frac{\| (X^n-\lambda_{\max}(Y^n) I)^{-1}u^n\|}{\|u^n\|} 
 \quad \xrightarrow[n\to\infty]{\mathbb{P}} \quad\frac{1}{\sqrt{|\xi|^2-1}}\,, 
\]
and the result is obtained through a direct calculation. 
\end{proof}

\section{Open problems}

We now present several open problems that emerge naturally from our results. Most of these appear approachable using refinements of existing techniques, while one in particular—concerning assumptions on Theorem \ref{thm_eigenvector}—poses a more significant theoretical challenge and remains largely unresolved.

\begin{problem}[sparser regimes]
The bounds in \eqref{bound_on_var} and \eqref{bound_on_mean} tend to zero even when $K_n$ remains bounded as $n \to \infty$. Our current methods already yield an analogue of \eqref{thm_statement_char_pol} in the case $K_n = K > 0$. To fully extend the result, one must compute the moments of $\operatorname{Tr}(X^n)$, as in Lemma~\ref{lemma_moments_of_X^n}. The limiting distribution is not Gaussian—in the directed Erdős–Rényi case, for instance, the non-Gaussian limit is derived in \cite{coste2023sparse}.
\end{problem}

\begin{problem}[types of sparsity]
Extend the analysis to alternative sparsity regimes beyond that defined in \eqref{sparser_matrices}. For example, consider the Hadamard product of an i.i.d. matrix with the adjacency matrix of a $K_n$-regular graph, uniformly sampled from the space of such graphs. The interplay between randomness and structured sparsity presents new analytical challenges.
\end{problem}

\begin{problem}[unbounded eigenvalues of \texorpdfstring{$E^n$}{En}]
Proposition~\ref{tr(Y)-tr(X)-det} remains valid if $\|E^n\| = O(n^{o(1)})$. Investigate whether, after proper normalization, the sequence $q_n(z)$ remains tight and whether Theorem~\ref{characteristic_poly_thm} continues to hold when $\|E^n\| \to \infty$ as $n \to \infty$.
\end{problem}

\begin{problem}[assumptions on Theorem \ref{thm_eigenvector}]
Can one remove the distributional and sparsity assumptions in Theorem~\ref{thm_eigenvector}? Doing so would require asymptotic lower bounds on the least singular value $s_n(X^n - \xi I)$. Our approach depends on the universality results of \cite{brailovskaya2024universality}, which justify these extra assumptions. Removing them appears to be a substantially harder problem and is currently out of reach.
\end{problem}

\begin{problem}[eigenvectors of finite-rank perturbation]
Generalize Theorem~\ref{thm_eigenvector} to the case of a deformation with an arbitrary finite
rank, similarly to what was done in the Hermitian case by, \emph{e.g.}, \cite{benaych2011eigenvalues}). This generalization is useful for many applicative contexts
where the matrix $E^n$ bears an ``information'' buried in the sparse noise matrix $X^n$. 
\end{problem}

\section*{Funding}

M.\ Louvaris has received funding from the European Union’s Horizon~2020 research
and innovation programme under the Marie Skłodowska--Curie grant agreement
No.~101034255.

\section*{Acknowledgement}
The authors wish to thank Ada Altieri for fruitful discussions.

\bibliographystyle{alpha}

\bibliography{references}
\addcontentsline{toc}{chapter}{bibliography}
\end{document}